\newtheorem{theorem}{Theorem}[section]
\newtheorem{proposition}[theorem]{Proposition}
\newtheorem{corollary}[theorem]{Corollary}
\newtheorem{lemma}[theorem]{Lemma}
\theoremstyle{definition}
\newtheorem{definition}[theorem]{Definition}
\theoremstyle{remark}
\newtheorem{remark}{Remark}[theorem]
\newtheorem{example}{Example}[theorem]
\newcommand\NN{\mathbb{N}}
\newcommand\ZZ{\mathbb{Z}}
\newcommand\B{\mathcal{B}}
\newcommand\A{\mathcal{A}}
\newcommand\LL{\mathcal{L}}
\DeclareMathOperator\id{id}
\DeclareMathOperator\Ext{Ext}
\DeclareMathOperator\Hom{Hom}
\DeclareMathOperator\im{Im}
\newcommand{\llrr}[1]{
  \langle #1 \rangle}
\title{Projective resolutions of associative algebras and ambiguities}
\author{Sergio Chouhy and Andrea Solotar}
\thanks{This work has been supported by the projects UBACYT X475, PIP-CONICET 2012-2014 11220110100870, PICT 2011-1510  and MathAmSud-GR2HOPF.
The first author is a CONICET fellow.
The second author is a research member of CONICET}
\date{}
\begin{document}

\begin{abstract}
   {
   The aim of this article is to give a method to construct bimodule resolutions of associative algebras, generalizing Bardzell's well-known resolution of monomial algebras.
   We stress that this method leads to concrete computations, providing thus a useful tool for computing invariants associated to the considered algebras. We illustrate how 
   to use it by giving several examples in the last section of the article. In particular we give necessary and sufficient conditions for noetherian down-up algebras to be 3-Calabi-Yau.
      }
\end{abstract}

\maketitle

\medskip

\textbf{2010 Mathematics Subject Classification: 16S15, 16D40, 16S38, 16D90, 18G10.} 

\textbf{Keywords:} Hochschild cohomology, resolution, homology theory.


\section{Introduction}
\label{s:Intro}
The invariants attached  to associative algebras and in particular 
to  finite dimensional algebras, have been widely 
studied during the last decades. Among others, Hochschild homology
and cohomology of diverse families of algebras have been computed.

The first problem one faces when computing Hochschild (co)homology is
to find a convenient projective resolution of the algebra as a bimodule over itself.
Of course, the bar resolution is always available but it is almost impossible
to perform computations using it.

M. Bardzell provided in \cite{Ba} a bimodule resolution for monomial algebras, that is, algebras $A=kQ/I$ with $k$ a field, $Q$ a 
finite quiver and $I$ a two-sided ideal which can be generated by monomial 
relations; in this situation, 
the set of classes in $A$ of paths in $Q$ which are not zero is a basis of 
$A$.
Moreover, this resolution is minimal. A simple proof of the exactness
of Bardzell's complex has been given by E. Sk\"oldberg in \cite{Sk},
where he provided a contracting homotopy.
Of course, having such a resolution does not solve the whole problem, it is just a 
starting point. 

The non monomial case is more difficult, since it involves rewriting the paths in
terms of a basis of $A$.
Different kinds of resolutions for diverse families of 
algebras have been provided in the literature.
For augmented $k$-algebras, Anick constructed in \cite{An} a
projective resolution of the ground field $k$. The projective modules
in this resolution are constructed in terms of ambiguities (or $n$-chains),
 and the differentials are not given explicitly. In practice, 
it is hard to make this construction explicit enough in order to compute cohomology.
For quotients of path algebras over a quiver $Q$ with a finite number of
 vertices, Anick and Green exhibited in \cite{AG} a resolution 
for the simple module associated to each vertex, generalizing the 
result of \cite{An}, which deals with the case where the quiver $Q$ has only one vertex.
Also, Y. Kobayashi in \cite{Kob} proposes a method to 
construct a resolution which seems not to be extremely useful.

One may think that the case of binomial algebras is easier than others, but 
in fact it is not quite true since it is necesssary to keep track of 
all reductions performed when writing an element in terms of a chosen 
basis of the algebra as a vector space.

In this article we construct in an inductive way, given an algebra $A$,  a projective
bimodule resolution of $A$, 
which is a kind of deformation of Bardzell's resolution of a monomial algebra associated to $A$. 
For this, we use ideas coming from  Bergman's 
Diamond Lemma and from the theory of Gr\"obner bases. The resolution we give 
is not always minimal, but we prove minimality for various 
families of algebras.

In the context of quotients of path algebras corresponding to a quiver with a finite number of vertices, our method consists in constructing
a resolution whose projective bimodules come from ambiguities present in the rewriting system.
Of course there are many different ways of choosing a basis, so we must state conditions that assure that the rewriting process ends 
and that it is efficient.

One of the advantages of doing this is that, once a bimodule resolution is obtained, it is easy to construct starting from it a resolution of any module on one side 
and, in particular, to recover those constructed in \cite{An} and 
\cite{AG} for the case of the 
simple modules associated to the vertices of the quiver.

To deal with the problem of effective computation of these resolutions, 
Theorem \ref{teo1} below gives sufficient conditions for a complex defined over these projective bimodules to be exact.
We will be, in consequence, able to prove that some complexes are resolutions without following the procedure prescribed in the proof of the existence theorem.

Briefly, we do the following: given an algebra $A=kQ/I$ we compute a bimodule resolution of $A$ from a reduction system $\mathcal{R}$ 
for $I$ which satifies a condition we denote $(\lozenge)$. We prove that such a reduction system always exists, but we also show in an example that 
it may not be the most convenient one. In particular the resolution obtained may not be minimal.

Applying our method we recover a well-known resolution of quantum complete
intersections, see for example \cite{BE} and \cite{BGMS}.
We also construct a short resolution for down-up algebras 
which allows us to
prove that a noetherian down-up algebra $A(\alpha,\beta,\gamma)$ 
is $3$-Calabi-Yau if 
and only if $\beta=-1$.

\medskip

The contents  of the article are as follows. 
In Section \ref{s:Preliminaries} we fix notations and prove some preliminary results.
In Section \ref{s:Proj_modules} we deal with ambiguities.
In Section \ref{s:Resolution} we state the main theorems of this article, namely Theorem \ref{teo1} and Theorem \ref{teo2}, after proving some results on orders and differentials.
Section \ref{s:Proofs} is devoted to the proofs of these theorems; it contains several technical lemmas. 
In Section \ref{s:Morphisms} we construct explicitely the differentials in low degrees and
in Section \ref{s:Examples} we give several applications of our results.

Finally, in Section \ref{s:Final remarks} we give sufficient conditions on the reduction system for minimality of any 
resolution obtained from it. We also prove that in case $A$ is graded by the length of paths, and it has a reduction system satisfying the conditions
requiered for minimality of the resolution,
then $A$ is $N$-Koszul if and only if the associated monomial algebra $A_S$ is $N$-Koszul. 

\medskip

We have just seen a recent preprint by Guiraud, Hoffbeck and Malbos \cite{GHM} where they construct a resolution that may be related to ours.


\medskip

We are deeply indebted to Mariano Su\'arez-Alvarez and Eduardo Marcos for their help in improving this article.
We also thank Roland Berger, Quimey Vivas and Pablo Zadunaisky for 
discussions and comments.


\section{Preliminaries}

\label{s:Preliminaries}
In this section we give some definitions, present some basic constructions and we also prove
results that are necessary in the sequel.

Let $k$ be a field and $Q$ a quiver with a finite set of vertices. Given $n\in 
\mathbb{N}$, $Q_n$ denotes the set of paths of length $n$ in $Q$ 
and $Q_{\geq n}$ the set of paths of length at least $n$, that is, $Q_{\geq n}=\bigcup_{i\geq n}Q_i$. Whenever $c\in Q_n$, we will write 
$|c|=n$. If $a,b,p, q \in Q_{\ge 0}$ are such that $q=apb$, we say that $p$ is
a {\em divisor} of $q$; if, moreover, $a=1$, we say that $p$ is a {\em left 
divisor} of $q$ and analogously for $b=1$ and {\em right divisor}. We denote $\mathsf{t},\mathsf{s}:Q_1\to Q_0$
the usual source and target functions. Given $s\in Q_{\geq0}$ and a finite sum $f=\sum_i\lambda_ic_i\in kQ$
such that $c_i\in Q_{\geq0}$ and $\mathsf{t}(s)=\mathsf{t}(c_i)$, $\mathsf{s}(s) = \mathsf{s}(c_i)$
for all $i$, we say that $f$ is \textit{parallel} to $s$.
Let $E\colonequals kQ_0$ be the subalgebra of the path algebra generated by
the vertices of $Q$.

Given a set $X$ and a ring $R$, we denote $\llrr{X}_R$ the left 
$R$-module freely spanned by $X$.

Let $I$ be a two sided ideal of $kQ$, $A=kQ/I$ and $\pi:kQ\to A$ the 
canonical projection. We assume that $\pi(Q_0\cup Q_1)$ is linearly independent.

We recall some terminology from \cite{B} that we will use.
A set of pairs $\mathcal R=\{(s_i,f_i)\}_{i\in\Gamma}$ where $s_i\in Q_{\geq0}$, 
$f_i\in kQ$  is called a \textit{reduction system}.
We will always assume that a reduction system $\mathcal R=\{(s_i,f_i)\}_{i\in\Gamma}$
satisfies the following conditions 
\begin{itemize}
 \item for all $i$, $f_i$ is parallel to $s_i$ and $f_i\neq s_i$ .
 \item $s_i$ does not divide $s_j$ for $i\neq j$.
\end{itemize}
Given $(s,f)\in \mathcal R$ and $a,c\in Q_{\geq0}$ such that $asc\neq0$ in $kQ$, we will call the
triple $(a,s,c)$ a \textit{basic reduction} and write it $r_{a,s,c}$. 
Note that $r_{a,s,c}$ determines an $E$-bimodule endomorphism 
$r_{a,s,c}:kQ\to kQ$ such that $r_{a,s,c}(asc)=afc$ and 
$r_{a,s,c}(q)=q$ for all $q\neq asc$.

A \textit{reduction} is an $n$-tuple $(r_n,\dots,r_1)$ where $n\in\NN$ and $r_i$ is a 
basic reduction for $1\leq i\leq n$. As before, a reduction $r=(r_n,\dots,r_1)$
determines an $E$-bimodule endomorphism of $kQ$, the composition of the endomorphisms corresponding to
the basic reductions $r_n,\dots,r_1$.

An element $x\in kQ$ is said to be \textit{irreducible} for $\mathcal R$ if $r(x)=x$ for all basic 
reductions $r$. We will omit mentioning the reduction system  whenever it is clear from the context.
A path $p\in Q_{\geq0}$ will be called \textit{reduction-finite} if for any
infinite sequence of basic reductions $(r_i)_{i\in\NN}$, there exists $n_0\in\NN$
such that for all $n\geq n_0$, $r_n\circ\cdots\circ r_1(p)=
r_{n_0}\circ\cdots\circ r_1(p)$. Moreover, the path $p$ will be called
\textit{reduction-unique} if it is reduction-finite and for any two 
reductions $r$ and $r'$ such that $r(p)$ and $r'(p)$ are both irreducible, the equality $r(p)=r'(p)$ holds.

\begin{definition}
 We say that a reduction system $\mathcal R$ satisfies condition $(\lozenge)$ for $I$ if
\begin{itemize}
 \item the ideal $I$ is equal to the two sided ideal generated by the set $\{s-f\}_{(s,f)\in\mathcal R}$, \item every path is reduction-unique and
 \item for each $(s,f)\in\mathcal R$, $f$ is irreducible. 
\end{itemize}
\end{definition}

The reason why we are interested in these reduction systems is the following lemma, which is a restatement of Bergman's Diamond Lemma.
\begin{lemma}
\label{lemma:bases}
 If the reduction system $\mathcal R$ satisfies $(\lozenge)$ for $I$, then the set $\B$ of irreducible paths satisfies the following properties,
  \begin{enumerate}[(i)]
  \item $\B$ is closed under divisors,
  \label{condicionbase1}
  \item $\pi(b)\neq\pi(b')$ for all $b,b'\in\B$ with $b\neq b'$,
  \label{condicionbase2}
  \item $\{\pi(b):b\in\B\}$ is a basis of $A$.
  \label{condicionbase3}
  \end{enumerate}
\end{lemma}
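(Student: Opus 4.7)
The plan is to deduce the three assertions from condition $(\lozenge)$ by introducing a normalization map and then invoking the logic of Bergman's Diamond Lemma.

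For (i), I would argue by contrapositive: if $b \in \B$ factors as $b = apc$ with $a, c \in Q_{\geq 0}$ and $p \notin \B$, then $p$ admits some basic reduction, i.e.\ $p = a' s c'$ with $(s, f) \in \mathcal{R}$ and $a', c' \in Q_{\geq 0}$. But then $r_{aa', s, c'c}$ acts nontrivially on $b$, contradicting $b \in \B$. Hence every divisor of an irreducible path is irreducible.

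For (ii) and (iii), I would introduce the normalization $N : kQ \to kQ$ sending each path $p$ to its unique irreducible form $\bar p$ (well-defined by reduction-uniqueness), extended $k$-linearly, and set $V = \llrr{\B}_k$. Two properties drive the argument: first, $N(x) - x \in I$ for every $x \in kQ$, which reduces by linearity to paths and then follows inductively from the fact that each basic reduction modifies a path by an element $a(s-f)c \in I$; second, $N$ vanishes on $I$. Granting these, $V \cap I = 0$, because $x \in V \cap I$ satisfies both $N(x) = x$ and $N(x) = 0$. Consequently $\pi|_V$ is a bijection onto $A$: it is surjective since any $\pi(z) \in A$ equals $\pi(N(z))$ by the first property, and injective because its kernel is $V \cap I = 0$. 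Statement (ii) follows at once from injectivity on the basis $\B$ of $V$, and (iii) follows because $\B$ is a $k$-basis of $V$ while $\pi|_V$ is a linear isomorphism.

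The main obstacle is the second property, namely that $N$ kills $I$. Since $I$ is generated by $\{s - f : (s, f) \in \mathcal{R}\}$ by the first bullet of $(\lozenge)$, it suffices by $k$-linearity to verify $N(asc) = N(afc)$ for arbitrary paths $a, c \in Q_{\geq 0}$. This is precisely where reduction-uniqueness is indispensable: the path $asc$ reduces in one step to $afc$ via $r_{a,s,c}$, so any two completions of these to irreducible elements must yield the same result by the uniqueness hypothesis. Without this, the two normal forms could disagree and the entire scheme would collapse; everything else in the argument is either elementary or a formal consequence of the linearity of $N$ and the generation statement in $(\lozenge)$.
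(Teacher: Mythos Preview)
The paper does not prove this lemma; it simply states that it is a restatement of Bergman's Diamond Lemma and moves on. Your argument is correct and amounts to a sketch of the relevant implication in Bergman's theorem. Part (i) is elementary and does not even require $(\lozenge)$. For (ii) and (iii), your normalization $N$ is Bergman's map $r_\infty$, and your two properties---that $N(x)-x\in I$ and $N|_I=0$---together yield the decomposition $kQ=\llrr{\B}_k\oplus I$, which is exactly the content of the lemma.

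The one step you treat lightly is the passage from reduction-uniqueness of \emph{paths} to the assertion that any reduction of the linear combination $afc$ to irreducible form coincides with $N(afc)=\sum_i\lambda_i N(ac_ic)$. This is precisely Bergman's Lemma~1.1 (the reduction-unique elements form a submodule and $r_\infty$ is $k$-linear on it); once that is granted, your argument goes through without difficulty. Incidentally, you never invoke the third clause of $(\lozenge)$ (that each $f$ is irreducible), and indeed it is not needed for this lemma.
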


\begin{remark}
\label{remark:beta}
 In view of Lemma \ref{lemma:bases}, we can define a $k$-linear map $i: A \to kQ$ such that  be 
 $i(\pi (b))) = b$ for all $b\in  \B$. We denote by $\beta: kQ \to kQ$ the 
 composition $i\circ \pi$. Notice that if $p$ is a path and $r$ is a reduction such that $r(p)$ is irreducible, then $r(p)=\beta(p)$. 
 In the bibliography, $\beta(p)$ is sometimes called the normal form of $p$.
\end{remark}

\begin{definition}
If $\mathcal R$ is a reduction system satisfying $(\lozenge)$ for $I$, we define $S\colonequals\{s\in Q_{\geq0}: (s,f)\in\mathcal R\mbox{ for some }f\in kQ\}$.
\end{definition}
\begin{remark}
\label{remark:bprops_sb}
 Notice that:
 \begin{enumerate}
  \item $S$ is equal to the set $\{p\in Q_{\geq0}: p\notin\B\mbox{ and }p'\in\B\mbox{ for all proper divisors }p'\mbox{ of }p\}$.
   \item If $s$ and $s'$ are elements of $S$ such that $s$ divides $s'$, 
 then $s=s'$.
 \item Given $q\in Q_{\geq0}$, $q$ is irreducible if and only if there exists
 no $p\in S$ such that $p$ divides $q$.
 \end{enumerate}
\end{remark}

\begin{definition}
 Given a path $p$ and $q=\sum_{i=1}^n\lambda_ic_i\in kQ$ with $\lambda_1,\dots,\lambda_n\in k^\times$ and $c_1,\dots,c_n\in Q_{\geq0}$, we write $p\in q$ if $p=c_i$ for some $i$, or, in other words, when $p$ is in the support of $q$.
\end{definition}

 Given $p,q\in Q_{\geq0}$ we write $q\leadsto p$ if there exist $n\in\NN$, basic reductions $r_1,\dots,r_n$ and paths $p_1,\dots p_n$ such that $p_1=q$, $p_n=p$, and for all $i=1,\dots,n-1$, $p_{i+1}\in r_i(p_i)$.

\begin{lemma}
 \label{lemma:leadsto_es_orden}
 Suppose that every path is reduction-finite with respect to $\mathcal R$. 
 \begin{enumerate}[(i)]
  \item If $p$ is a path and $r$ a basic reduction such that $p\in r(p)$, 
then $r(p)=p$.
  \item The binary relation $\leadsto$ is an order on the set $Q_{\geq0}$ which is compatible with concatenation, that is, $\leadsto$ satisfies that $q\leadsto p$ implies $aqc\leadsto apc$ for all $a,c\in Q_{\geq0}$ such that $apc\neq0$ in~$kQ$.
  
  \item The binary relation $\leadsto$ satisfies the descending chain condition.
 \end{enumerate}
\end{lemma}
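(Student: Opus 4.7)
The plan is to establish (i), (ii) and (iii) in order, with (iii) supplying the antisymmetry invoked in (ii).

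For (i), I apply reduction-finiteness to the constant sequence $(r, r, r, \ldots)$. If $r = r_{a,s,c}$ does not act non-trivially on $p$, i.e.\ $p \neq asc$, then $r(p) = p$ by the very definition of a basic reduction. Otherwise $p = asc$ and $r(p) = afc$; write $afc = \lambda p + g$ with $p \notin \mathrm{supp}(g)$. The hypothesis $p \in r(p)$ forces $\lambda \neq 0$. Since $r$ is the identity on every path different from $p$, one has $r(g) = g$, so a routine induction yields $r^n(p) = \lambda^n p + (1 + \lambda + \cdots + \lambda^{n-1})g$. Reduction-finiteness forces $r^n(p) = r^{n_0}(p)$ for all $n \geq n_0$. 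Comparing the coefficient of $p$ gives $\lambda^{n_0+1} = \lambda^{n_0}$, whence $\lambda = 1$ since $\lambda \neq 0$; then $(n - n_0)g = 0$ in $kQ$ for all $n \geq n_0$, and taking $n = n_0 + 1$ forces $g = 0$. Therefore $r(p) = p$.

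For (ii), reflexivity holds with $n = 1$ and $p_1 = q = p$, transitivity follows by concatenation of the witnessing sequences, and compatibility is obtained by replacing each basic reduction $r_i = r_{a_i, s_i, c_i}$ in the chain witnessing $q \leadsto p$ by $\tilde r_i = r_{a a_i, s_i, c_i c}$ and each intermediate path $p_i$ by $a p_i c$; the identity $\tilde r_i(a p_i c) = a \cdot r_i(p_i) \cdot c$ ensures $a p_{i+1} c \in \tilde r_i(a p_i c)$, while the hypothesis $a p c \neq 0$ guarantees that all $a p_i c$ are nonzero since the $p_i$ are parallel to $p$. Antisymmetry will follow from (iii): if $p \leadsto q$ and $q \leadsto p$ with $p \neq q$, iterating the two concatenations produces an infinite chain $p \leadsto q \leadsto p \leadsto q \leadsto \cdots$ in which consecutive terms always differ, contradicting the descending chain condition.

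For (iii), I argue by contradiction. Suppose there is an infinite chain $p_1 \leadsto p_2 \leadsto \cdots$ with $p_j \neq p_{j+1}$ for all $j$, witnessed by basic reductions $r_j = r_{a_j, s_j, c_j}$. By (i), each $r_j$ must act non-trivially on $p_j$, for otherwise $p_{j+1} = p_j$. Set $q_n := r_n \circ \cdots \circ r_1(p_1)$; the aim is to derive a contradiction with reduction-finiteness by showing $q_n \neq q_{n-1}$ for all $n$. The key induction is that the coefficient $\mu_j$ of $p_j$ in $q_{j-1}$ is nonzero; one then has $q_j = q_{j-1} + \mu_j \cdot a_j(f_j - s_j)c_j$, which differs from $q_{j-1}$ because $f_j \neq s_j$. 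The inductive step computes the coefficient of $p_{j+1}$ in $q_j$ as (its coefficient in $q_{j-1}$) $+ \mu_j \lambda_{j+1}$, where $\lambda_{j+1}$ is the nonzero coefficient of $p_{j+1}$ in $a_j f_j c_j$; cancellation can occur only when $p_{j+1}$ already appears in $\mathrm{supp}(q_{j-1})$, in which case the elementary inclusion $\mathrm{supp}(r_{j-1} \circ \cdots \circ r_1(p_1)) \subseteq \{q : p_1 \leadsto q\}$ (itself a routine induction on the length of the reduction) furnishes a shorter chain witnessing $p_1 \leadsto p_{j+1}$.

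The hardest part will be handling these cancellations systematically, since a single rerouting through the shorter witness could in principle create fresh cancellations at earlier steps. I expect the resolution to come from an extremality argument: among all infinite chains from $p_1$ with $p_j \neq p_{j+1}$ for all $j$ (assumed nonempty for contradiction) choose one that minimizes the position of the first cancellation. A rerouting of this chain produces one whose first cancellation is strictly earlier, contradicting the minimality; thus in the extremal chain no cancellations occur, the induction proceeds without obstruction, and the resulting infinite sequence of distinct $q_n$ contradicts reduction-finiteness.
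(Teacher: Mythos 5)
Parts (i) and (ii) of your proposal (reflexivity, transitivity, compatibility with concatenation) are correct and essentially match the paper; your computation $r^n(p)=\lambda^n p+(1+\lambda+\cdots+\lambda^{n-1})g$ in (i) is in fact more careful than the one printed there. Deferring antisymmetry to (iii) is legitimate in principle, but it forces you to prove (iii) for infinite chains in which only \emph{consecutive} terms are known to be distinct, and that is precisely where your argument has a genuine gap.

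The gap is the cancellation case in (iii). You correctly compute that the coefficient of $p_{j+1}$ in $q_j$ equals its coefficient in $q_{j-1}$ plus $\mu_j\lambda_{j+1}$, and that this can vanish when $p_{j+1}$ already lies in the support of $q_{j-1}$; but the proposed fix is not an argument. You assert that rerouting the prefix through a shorter witness of $p_1\leadsto p_{j+1}$ yields a chain whose first cancellation occurs strictly earlier, and nothing in the construction supports this: the rerouted prefix uses a different sequence of basic reductions, its partial composites are unrelated to the $q_i$, and its first cancellation can a priori occur earlier, later, or not at all. No well-founded quantity is exhibited that the rerouting strictly decreases, so the extremality argument does not close (you flag this yourself with ``I expect the resolution to come from\dots''). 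The paper sidesteps the problem entirely: it first proves antisymmetry inside (ii) by analysing a cycle $p_1,\dots,p_{n+1}=p_1$ of minimal length $n$ and showing $p_i\notin x_j$ for all $i,j$, so that iterating the cycle of reductions never stabilizes; then, in (iii), with the $p_i$ now pairwise distinct, it extracts indices greedily by setting $i_{k+1}=\max\{i>i_k:\ p_i\in t_{i_k}\circ\cdots\circ t_{i_1}(p_1)\}$. Taking the \emph{maximum} maintains the invariant that no $p_j$ with $j>i_{k+1}$ occurs in the current partial composite, so at the next step the incoming path enters with coefficient $\lambda\cdot(\text{nonzero})+0$ and cancellation is impossible by construction; the partial composites then contain infinitely many distinct paths and cannot stabilize. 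Note that this maximality device needs the set $\{i>i_k:\ p_i\in t_{i_k}\circ\cdots\circ t_{i_1}(p_1)\}$ to be finite, which uses pairwise distinctness of the $p_i$ and hence antisymmetry; so the cleanest repair of your plan is to prove antisymmetry directly in (ii), as the paper does, rather than derive it from (iii).
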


\begin{proof}
\textit{(i)} 
The hypothesis means that $r(p) = \lambda p + x$ with $\lambda\in 
k^\times$ and $p\notin x$. If $x\neq0$ or $\lambda\neq1$, then $r$ acts 
nontrivially on $p$ and so it acts trivially on $x$. Since
the sequence of reductions $(r,r,\cdots)$ stabilizes when acting on $p$, 
there exists $k\in\NN$ such that $\lambda^kp+kx=r^k(p) = 
r^{k+1}(p)=\lambda^{k+1}p+(k+1)x$. As a consequence, $\lambda=1$ and $x=0$.

\textit{(ii)}
It is clear that $\leadsto$ is a transitive and reflexive relation and that 
it is compatible with concatenation. Let us suppose that it is not 
antisymmetric, so that there exist $n\in\NN$, paths $p_1,\dots,p_{n+1}$ and 
basic reductions $r_1,\dots,r_n$ such that $p_{i+1}\in r_{i}(p_{i})$ for 
$1\leq i\leq n$ and $p_{n+1}=p_1$. Suppose that $n$ is minimal. There exist 
$x_1,\dots,x_n\in kQ$ and $\lambda_1,\dots,\lambda_n\in k^\times$ such that 
$r_i(p_i) = \lambda_{i}p_{i+1} + x_{i}$ with $p_{i+1}{\notin x_i}$. Notice 
that since $n$ is minimal, $r_i(p_i)\neq p_i$ and then $r_i$ acts trivially 
on every path different from $p_i$, for all $i$.

Let us see that 
\[
 \text{ $p_i\notin x_j$ for all $i\neq j$}.
\]

Since the sequence $p_1,\dots,p_{n+1}=p_1$ is cyclic, it is enough 
to prove that $p_1\notin x_j$ for all $j$. Suppose that $p_1\in x_j$ for 
some $j\in\{1,\dots,n\}$. Since $p_{i+1}\notin x_i$ for all $i$ and 
$p_{n+1}=p_1$, it follows that $j\neq n$, and by part (i), 
$j\neq1$. Let 
$u_k=p_{k}$ and $t_k=r_{k}$ for $1\leq k\leq j$ and 
$u_{j+1}=p_1$.
Notice that $u_{k+1}\in t_k(u_k)$ for $1\leq k\leq j$ and $u_{j+1}=u_1$. 
Since $j<n$ this contradicts the choice of $n$. It follows that 
\[
 \text{ $p_i\notin x_j$ for all $i,j$}.
\]
One can easily check that this implies $r_n\circ \cdots \circ 
r_1(p_1)=\lambda p_1 + x$ with $p_i\notin x$ for all $i$. Now, define 
inductively for $i>n$, $r_i\colonequals r_{i-n}$. The sequence 
$(r_i)_{i\in\NN}$ acting on $p_1$ never stabilizes, which contradicts the 
reduction-finiteness of the reduction system $\mathcal R$.

\textit{(iii)}
Suppose not, so that there is a sequence $(p_i)_{i\in\NN}$ of paths and a sequence of basic reductions $(t_i)_{i\in\NN}$ such that $p_{i+1}\in t_i(p_i)$. Since $\leadsto$ is an antisymmetric relation, $p_i\neq p_j$ if $i\neq j$.
 
 Let $i_1=1$. Suppose that that we have constructed $i_1,\dots,i_k$ such that
 $i_1<\cdots<i_k$, $p_{i_k}\in t_{i_{k-1}}\circ\cdots\circ t_1(p_1)$ and
 $p_j\notin t_{i_{k-1}}\circ\cdots\circ t_1(p_1)$ for all $j>i_k$. Set $X_k=\{i>i_k: p_i\in
 t_{i_k}\circ\cdots\circ t_{i_1}(p_1)\}$. By the inductive hypothesis, there is $x\in kQ$ and 
 $\lambda\in k^\times$ such that $t_{i_{k-1}}\circ\cdots\circ t_{i_1}(p_1) = \lambda p_{i_k}+x$ with
 $p_{i_k}\notin x$. Since we also know that $p_{i_k+1}\in t_{i_k}(p_{i_k})$, and 
 $p_{i_k+1}\notin t_{i_{k-1}}\circ\cdots\circ t_{i_1}(p_1)$ 
 it follows that $p_{i_k+1}\in t_{i_k}(p_{i_k})+x$. Also, 
$t_{i_k}\circ\cdots\circ t_{i_1}(p_1)=\lambda 
t_{i_k}(p_{i_k})+t_{i_k}(x)=\lambda t_{i_k}(p_{i_k}) + x$, so $p_{i_k+1}\in 
t_{i_k}\circ\cdots\circ t_{i_1}(p_1)$.
 Therefore $X_k$ is not empty. We may define $i_{k+1}=\max X_k$, because $X_k$ is a finite set.
 
 This procedure constructs inductively a strictly increasing sequence of indices $(i_k)_{k\in\NN}$ with $p_{i_k}\in\tilde p_{i_k}\colonequals t_{i_{k-1}}\circ\cdots\circ t_{i_1}(p_1)$ for all $k\in\NN$. The set $\{t_{i_{k-1}}\circ\cdots\circ t_{i_1}(p_1):k\in\NN\}$ is therefore infinite. This contradicts the reduction-finiteness of $\mathcal R$.
\end{proof}

 The converse to Lemma \ref{lemma:leadsto_es_orden} also holds, that is,
 if $\mathcal R$ is a reduction system for which $\leadsto$ is a partial order
 satisfying the descending chain condition, then every path is reduction-finite.
 In other words, the order $\leadsto$ captures most of the properties we require
 $\mathcal R$ to verify, and it will be important in the next sections.

 \medskip
 
The following characterization of the relation $\leadsto$ is very useful in practice.

\begin{lemma}
\label{lemma:caract_orden_leadsto}
 If $p,q$ are paths, then $q\leadsto p$ if and only if $p=q$ or there exists a reduction $t$ such that $p\in t(q)$.
\end{lemma}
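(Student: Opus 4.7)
The plan is to prove both implications separately, each by induction on a natural length parameter; I use throughout that $\leadsto$ is transitive, which is immediate from concatenating chains.

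\textbf{If direction.} Since $q\leadsto q$ trivially via the one-element chain, what is left is to show that $p\in t(q)$ for some reduction $t=(r_m,\dots,r_1)$ implies $q\leadsto p$. I would induct on $m$. For $m=0$ the endomorphism $t$ is the identity, so $p\in q$ forces $p=q$. For $m\ge 1$, set $t'=(r_{m-1},\dots,r_1)$ and expand $t'(q)=\sum_j\alpha_j u_j$ as a sum of paths with $\alpha_j\in k^\times$. Then $t(q)=\sum_j\alpha_j r_m(u_j)$, and $p\in t(q)$ forces $p\in r_m(u_{j_0})$ for at least one $j_0$, since otherwise every summand contributes $0$ to the coefficient of $p$. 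By the inductive hypothesis applied to $u_{j_0}\in t'(q)$, one has $q\leadsto u_{j_0}$, and combining with the one-step chain $u_{j_0}\leadsto p$ via $r_m$ gives $q\leadsto p$ by transitivity.

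\textbf{Only if direction.} Given a chain $q=p_1,p_2,\dots,p_n=p$ with basic reductions $r_1,\dots,r_{n-1}$, I would iteratively construct reductions $t_i$ with $p_i\in t_i(q)$ and take $t=t_n$. Start with $t_1$ the empty reduction; then $t_1(q)=q$ contains $p_1$. Given $t_i$, write $t_i(q)=\lambda_i p_i+z_i$ with $\lambda_i\in k^\times$ and $p_i\notin z_i$. If $p_{i+1}=p_i$, set $t_{i+1}=t_i$. Otherwise, since $p_{i+1}\in r_i(p_i)$ and $p_{i+1}\neq p_i$, the basic reduction $r_i$ acts nontrivially on $p_i$, so $r_i=r_{a,s,c}$ with $p_i=asc$. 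If $p_{i+1}\in z_i$, then $p_{i+1}\neq p_i$ gives $p_{i+1}\in t_i(q)$, and I would again take $t_{i+1}=t_i$. If $p_{i+1}\notin z_i$, I would set $t_{i+1}=(r_i,t_i)$; then $p_i=asc\notin z_i$ forces $r_i(z_i)=z_i$, so $t_{i+1}(q)=\lambda_i r_i(p_i)+z_i=\lambda_i(afc)+z_i$, in which $p_{i+1}$ has nonzero coefficient because $p_{i+1}\in afc$ while $p_{i+1}\notin z_i$.

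\textbf{Main obstacle.} The delicate point is the potential cancellation in the ``only if'' direction: naively taking $t=(r_{n-1},\dots,r_1)$ can fail because after applying $r_i$ the contribution from $z_i$ may cancel the contribution from $\lambda_i r_i(p_i)$ at $p_{i+1}$. The case analysis above defuses this by the observation that in the bad scenario $p_{i+1}\in z_i$ the shorter reduction $t_i$ already witnesses $p_{i+1}\in t_i(q)$, so one simply does not extend. The other crucial ingredient is the ``single path'' nature of basic reductions: $r_{a,s,c}$ acts trivially on any path other than $asc$, which is precisely what makes $r_i(z_i)=z_i$ hold whenever $p_i\notin z_i$. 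No appeal to reduction-finiteness is needed, matching the generality in which the lemma is stated.
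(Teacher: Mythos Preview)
Your proof is correct. Both directions work as written, and your observation that reduction-finiteness is not needed is accurate.

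The approach differs meaningfully from the paper's. For the \emph{only if} direction, the paper takes a chain of minimal length $n$ and proves that the full composite $(r_{n-1},\dots,r_1)$ applied to $q$ already contains $p$; the bulk of the work goes into the auxiliary claim that $p_i\notin\tilde p_j$ for $i>j$ (where $\tilde p_j=r_{j-1}\circ\cdots\circ r_1(q)$), established by a contradiction with minimality that shortens the chain. You instead build the witness reduction incrementally, allowing yourself to \emph{skip} a basic reduction whenever the target path $p_{i+1}$ already lies in $t_i(q)$. This sidesteps the cancellation analysis entirely and avoids the minimality argument, making the proof more direct and slightly shorter. For the \emph{if} direction both proofs induct on the length $m$ of $t$, but the paper again works with a minimal $m$ and argues via the support of intermediate composites, while you simply peel off the last basic reduction $r_m$, observe that $p$ must survive in some $r_m(u_{j_0})$ with $u_{j_0}\in t'(q)$, and chain the inductive hypothesis with a single step. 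Your version is again cleaner. The paper's route has the minor advantage of identifying a specific witness (the composite of the minimal chain), but yours gives an equally explicit one and is easier to verify.
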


\begin{proof}First we prove the necessity of the condition.
 Let $n\in\NN$, $r_1,\dots,r_n$ and $p_1,\dots,p_n$ be as in the definition of $\leadsto$, and suppose that
 $n$ is minimal. Let $\tilde p_1=p_1$ and for each $i=1,\dots,n-1$ put $\tilde p_{i+1}=r_i(\tilde p_i)$. Notice that the minimality implies that $r_i(p_i)\neq p_i$. 
 Let us first show that 
 \begin{equation}
 \label{tilde}
  \text{ if $i>j$ then $p_i\notin \tilde p_j.$}
 \end{equation}
 Suppose otherwise and let $(i,j)$ be a counterexample with $j$ minimal. 
 We will prove that in this situation, $p_l\in \tilde 
p_l$ for all $l<j$. We proceed by induction on $l$. By 
definition, $p_1\in \tilde p_1$. Suppose $1\leq l<j-1$ and $p_l\in \tilde 
p_l$. Then we have
 $p_{l+1}\in r_l(p_l)$ and, since $l<j$, $p_{l+1}\notin \tilde p_l$.
 Write $\tilde p_l = \lambda p_l + x$ with $x\in kQ$ and $p_l\notin x$. 
Since $r_l$ acts nontrivially on $p_l$, it acts trivially on $x$; it 
follows that $r_l(\tilde p_l) = \lambda r_l(p_l) + x$ and so $p_{l+1}\in 
r_l(\tilde p_l)=\tilde p_{l+1}$. In particular $p_{j-1}\in\tilde p_{j-1}$. 
Since $p_i\notin \tilde p_{j-1}$ and $p_i\in\tilde p_j$, we must have 
$p_i\in r_{j-1}(p_{j-1})$.

 Now, let $m=n+j-i$, $t_k=r_k$ and $u_k=p_k$ if $k\leq j-1$, and 
$t_k=r_{i+k-j}$ and $u_k=p_{i+k-j}$ if~$j\leq k\leq m$. One can check that 
$u_1=q$, $u_{n+j-i}=p$ and
 that $u_{k+1}\in t_k(u_k)$ for all $k=1,\dots,m-1$. Since $m<n$ this contradicts the choice 
 of $n$. We thus conclude that \eqref{tilde} holds.
 
 We can use the same inductive argument as before to prove 
that $p_i\in\tilde p_i$ for all $1\leq i\leq n$. 
 Denoting $t=(r_n,\dots,r_1)$,  observe that $p \in t(q)$.
 
 Let us now prove the converse. Let $t=(t_m,\dots,t_1)$ be a reduction 
such that $p\in t(q)$ and $m$ is minimal, and let us proceed by induction 
on $m$. Notice that if $m=1$ there is nothing to prove. If $t_i$ is the 
basic reduction $r_{a_i,s_i,c_i}$, let $p_i=a_is_ic_i$. Using the same 
ideas as above one can show that
 \[
  \begin{minipage}{0.6\displaywidth}
   if $u\neq q$ and $u\notin t_i(p_i)$ for each $1\leq i\leq m$, then $u\notin t_l\circ\cdots\circ t_1(q)$ for each $0\leq l\leq m$.
  \end{minipage}
 \]
 Since $p\in t(q)$ either $p=q$ or there exists $i\in\{1,\dots,m\}$ such that $p\in t_i(p_i)$. In the first case $q\leadsto p$. In the second case, we know that $p_i\leadsto p$ and we need 
 to prove that $q\leadsto p_i$. Since $m$ is minimal, $t_i(t_{i-1}\circ\cdots\circ t_1(q))\neq t_{i-1}\circ\cdots\circ t_1(q)$ and then $p_i\in t_{i-1}\circ\cdots\circ t_1(q)$. The result now follows
 by induction because $i-1<m$.
\end{proof}

\medskip

\begin{proposition}
\label{prop:existencia_sr}
 If $I\subseteq kQ$ is an ideal, then there exists a reduction system $\mathcal R$ which satisfies condition $(\lozenge)$ for $I$.
\end{proposition}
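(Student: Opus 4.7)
The plan is to build $\mathcal{R}$ by a Gröbner-basis style construction. First I fix a total order $<$ on $Q_{\geq 0}$ compatible with concatenation (so that $p<q$ implies $apc<aqc$ whenever both concatenations are nonzero) and satisfying the descending chain condition; for instance, order paths first by length and then lexicographically from fixed orderings of $Q_0$ and $Q_1$. This order will play the role played by $\leadsto$ in Lemma \ref{lemma:leadsto_es_orden}.

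Next, I define
\[
B = \{\, b\in Q_{\geq 0} : \pi(b)\notin \mathrm{span}_k\{\pi(p) : p<b\}\,\}
\]
and show two properties. (a) $B$ is closed under divisors: if $p\notin B$ then $\pi(p)=\sum_i\lambda_i\pi(p_i)$ with $p_i<p$, so for any $a,c$ with $apc\neq 0$, $\pi(apc)=\sum_i\lambda_i\pi(ap_ic)$ with $ap_ic<apc$ by compatibility, hence $apc\notin B$. (b) $\{\pi(b):b\in B\}$ is a $k$-basis of $A$: linear independence follows by looking at the $<$-maximal $b$ in a hypothetical dependence, and spanning follows by well-founded induction on $<$ (guaranteed by DCC), replacing each non-basis path with a combination of strictly smaller ones. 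Now set
\[
S = \{\, s \in Q_{\geq 0}\setminus B : \text{every proper divisor of } s \text{ lies in } B\,\}.
\]
For each $s\in S$, since $s\notin B$ we can write $\pi(s)$ as a combination of $\pi(b)$'s with $b<s$, and since $\pi(B)$ is a basis there is a unique $f_s\in\mathrm{span}_k\{b\in B : b<s\}$ with $\pi(f_s)=\pi(s)$. Parallelness of $f_s$ to $s$ follows from the assumption that $\pi$ is injective on $Q_0\cup Q_1$; $f_s\ne s$ because every term in $f_s$ is $<s$; and if $s,s'\in S$ with $s$ a proper divisor of $s'$, then $s\in B$ by definition of $S$, a contradiction. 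Put $\mathcal{R}=\{(s,f_s)\}_{s\in S}$.

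It remains to verify $(\lozenge)$. Let $J$ be the two-sided ideal generated by $\{s-f_s : s\in S\}$; evidently $J\subseteq I$. Every basic reduction alters a path by an element of $J$, and because $<$ satisfies DCC and each nontrivial basic reduction strictly lowers the maximal path in the support, a finite number of reductions sends any $g\in kQ$ to some $\bar g\in\mathrm{span}_k\, B$ with $g-\bar g\in J$. If $g\in I$ then $\pi(\bar g)=0$, and linear independence of $\pi(B)$ forces $\bar g=0$, giving $g\in J$; hence $I=J$. By Remark~\ref{remark:bprops_sb}(3) (whose proof is immediate from the definitions and does not depend on $(\lozenge)$), a path $q$ is irreducible iff no element of $S$ divides $q$; combined with minimality of elements of $S$ among paths outside $B$, this shows that the set of irreducible paths is exactly $B$. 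Consequently, any reduction $r$ with $r(p)$ irreducible yields $r(p)\in\mathrm{span}_k\,B$ with $\pi(r(p))=\pi(p)$, and uniqueness of the expansion in the basis $\pi(B)$ gives reduction-uniqueness. Finally, each $f_s$ is a combination of elements of $B$, hence irreducible.

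The main obstacle is the intertwined verification that $\pi(B)$ spans $A$ and that irreducible paths are exactly the elements of $B$: both rely on using the admissibility of $<$ (concatenation-compatibility together with DCC) to run a well-founded induction, and the definition of $S$ as the minimal non-$B$ paths is chosen precisely so that these two inductive arguments close up and deliver reduction-uniqueness without having to invoke the Diamond Lemma externally.
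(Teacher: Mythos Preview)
Your argument is correct and is essentially the paper's own proof: both fix a length-then-lexicographic order on paths, take $S$ to be the minimal leading terms of $I$ (your set $B$ is exactly the complement of $\mathsf{tip}(I)$, so your $S$ coincides with the paper's $\mathsf{Mintip}(I)$), and verify $(\lozenge)$ from the fact that $\pi(B)$ is a basis of $A$; the only cosmetic difference is that you choose each $f_s$ irreducible from the outset, whereas the paper first picks an arbitrary $f_s<_\omega s$ and then replaces it by its normal form. One small inaccuracy worth fixing: a nontrivial basic reduction need not lower the \emph{maximal} path in the support (it may act on a non-maximal term), so to obtain reduction-finiteness in the strong sense required by the definition of ``reduction-unique'' you should either always reduce at the maximal reducible term or invoke the multiset extension of your well-founded order; this is standard and does not affect the validity of your construction.
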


We will prove this result by putting together a series of lemmas.

Let $\leq$ be a well-order on the set $Q_0\cup Q_1$ such that $e<\alpha$ for all $e\in Q_0$ and $\alpha\in Q_1$. Let $\omega:Q_1\to\NN$ be a function and extend it to $Q_{\geq0}$ defining $\omega(e)=0$ for all $e\in Q_0$ and $\omega(c_n\cdots c_1)=\sum_{i=1}^n\omega(c_i)$ if $c_i\in Q_1$ and $c_n\cdots c_1$ is a path.
Given $c,d\in Q_{\geq0}$ we write that $c\leq_\omega d$ if
\begin{itemize}
 \item $\omega(c)<\omega(d)$, or
 \item $c,d\in Q_0$ and $c\leq d$, or
 \item $\omega(c)=\omega(d)$, $c=c_n\cdots c_1,d=d_m\cdots d_1\in Q_{\geq1}$ and there exists 
   $j\leq\mathsf{min}(|c|,|d|)$ such that $c_i=d_i$ for all 
   $\in\{1,\dots,j-1\}$ and $c_j<d_j$.
\end{itemize}

Notice that the order $\leq_\omega$ is in fact the {\em deglex} order with weight $\omega$, and it has the following two properties:

\begin{enumerate}[(i)]
 \label{lemma:ordenes}
  \item If $p,q\in Q_{\geq0}$ and $p\leq_\omega q$, then $cpd\leq_\omega cqd$ for all 
  $c,d\in Q_{\geq0}$ such that $cpd\neq0$ and $cqd\neq0$ in $kQ$.
  \label{lemma:ordenes_item1}
  \item For all $q\in Q_{\geq0}$ the set $\{p\in Q_{\geq0} : p\leq_\omega q\}$ 
  is finite.
  \label{lemma:ordenes_item2}
\end{enumerate}

 It is straightforward to prove the first claim.
 For the second one, let $\{c^i\}_{i\in\NN}$ be a sequence in $Q_{\geq0}$ such that $c^{i+1}\leq_\omega c^i$ for all $i$.
If $c^i\in Q_0$ for some $i$, then it is evident that the sequence stabilizes, so let us suppose that $\{c^i\}_{i\in\NN}$ is contained in $Q_{\geq1}$ 
and $c^{i+1}<_\omega c^i$ for all $i\in\NN$. 
Since $(\omega(c^i))_{i\in\NN}$ is a decreasing sequence of natural 
numbers, it must stabilize, so we may also suppose that 
$\omega(c^i)=\omega(c^j)$ for all $i,j$ and that the lengths of the paths 
are bounded above by some $M\in\NN$. 
By definition of $\leq_\omega$, we know that the sequence of first 
arrows of elements of $\{c^i\}_{i\in\NN}$ 
forms a decreasing sequence in $(Q_1,\leq)$, which must stabilize because $(Q_1,\leq)$ is well-ordered. Let $N\in\NN$ be such that the first arrow of
$c^i$ equals the first arrow of $c^j$ for all $i,j\geq N$. If $c^i = c_{n_i}^i\cdots c_1^i$, and we denote $c'^i=c_{n_i}^i\cdots c_2^i$, then 
$\{c'^i\}_{i\geq N}$ is a decreasing sequence in $(Q_{\geq0},\leq_\omega)$ with $|c'^i|=M-1$ for all $i$. Iterating this process we arrive to a contradiction.


\medskip

\begin{definition}
 Consider as before a well-order $\leq$ on $Q_0\cup Q_1$ and 
$\omega:Q_1\to\NN$, and $\leq_\omega$ be constructed from 
them. If $p\in kQ$ and $p=\sum_{i=1}^n\lambda_ic_i$ with $\lambda_i\in 
k^\times$,
 $c_i\in Q_{\geq 0}$ and $c_i<_{\omega} c_1$ for all $i\neq1$, we write $\mathsf{tip}(p)$
 for $c_1$. If $X\subseteq kQ$, we let $\mathsf{tip}(X)\colonequals\{\mathsf{tip}(x): x\in X\setminus\{0\}\}$.
\end{definition}

Consider the set
\[
S\colonequals \mathsf{Mintip}(I) = \{p\in\mathsf{tip}(I): p'\notin\mathsf{tip}(I)\mbox{ for all 
proper divisors }p'\mbox{ of }p\}.
\]
Notice that if $s$ and $s'$ both belong to $S$ and $s\neq s'$, then $s$ does not divide $s'$.
For each $s\in S$, choose $f_s\in kQ$ such
that $s-f_s\in I$, $f_s<_\omega s$ and $f_s$ is parallel to $s$.

Describing the set $\mathsf{tip}(I)$ is not easy in general. We comment on this problem at the beginning of the last section, where we compute examples.

\begin{lemma}
\label{lemma:existencia-sr1}
 Let $\leq_\omega$ and $S$ be as before. The ideal $I$ equals the two sided ideal generated by the set $\{s-f_s\}_{s\in S}$, which we will denote by $\llrr{s-f_s}_{s\in S}$.
\end{lemma}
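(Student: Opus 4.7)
The plan is to prove the nontrivial inclusion $I\subseteq J$, where $J\colonequals\langle s-f_s\rangle_{s\in S}$, by a minimal-counterexample argument along the order $\leq_\omega$. The reverse inclusion $J\subseteq I$ is immediate, since each generator $s-f_s$ lies in $I$ by the choice of $f_s$.

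Suppose for contradiction that $I\setminus J\neq\emptyset$. Since $\leq_\omega$ is a total order on $Q_{\geq 0}$ and every initial segment is finite by property (ii), $\leq_\omega$ is well-founded on $Q_{\geq 0}$: any nonempty subset has a minimum, because the intersection of that subset with any initial segment is finite. Hence the set $\{\mathsf{tip}(x):x\in I\setminus J\}$ has a minimum $t$. Pick $x\in I\setminus J$ with $\mathsf{tip}(x)=t$ and write $x=\lambda t+y$, with $\lambda\in k^\times$ and every monomial of $y$ strictly $<_\omega t$.

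Next I would locate a generator matching $t$. Since $t\in\mathsf{tip}(I)$, a descending induction on the length $|t|$ yields some $s\in\mathsf{Mintip}(I)=S$ with $t=asb$ for paths $a,b$: if $t\notin S$ then by definition $t$ has a proper, hence strictly shorter, divisor still in $\mathsf{tip}(I)$, and the induction terminates because path lengths are nonnegative integers. Now set
\[
x'\colonequals x-\lambda\cdot a(s-f_s)b = y+\lambda\cdot af_sb .
\]
For each monomial $c$ appearing in $f_s$, the inequality $c<_\omega s$ combined with property (i) of $\leq_\omega$ gives $acb<_\omega asb=t$, so every monomial of $af_sb$, and of $y$, is strictly $<_\omega t$. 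Therefore either $x'=0$, in which case $x=\lambda\cdot a(s-f_s)b\in J$, contradicting $x\notin J$, or $\mathsf{tip}(x')<_\omega t$. In the latter case $x'\in I$ (as a difference of elements of $I$), and $x'\notin J$ (otherwise $x=x'+\lambda\cdot a(s-f_s)b$ would be in $J$), contradicting the minimality of $t$.

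The only step requiring care is the claim that every $p\in\mathsf{tip}(I)$ is divisible by some element of $\mathsf{Mintip}(I)=S$. This is a short length-induction, but it is the bridge that makes the cancellation work; everything else is the standard Gr\"obner-style observation that the leading term of a counterexample can always be knocked down by an appropriate multiple of a generator, producing a strictly smaller counterexample and thus the contradiction.
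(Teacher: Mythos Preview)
Your proof is correct and follows essentially the same approach as the paper: both reduce the tip of an element of $I$ by subtracting a suitable multiple $\lambda a(s-f_s)b$ with $s\in S$ dividing the tip, and conclude using well-foundedness of $\leq_\omega$ (property~(ii)). The only cosmetic difference is that you phrase the descent as a minimal-counterexample argument, whereas the paper phrases it as an iterative procedure that must terminate.
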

\begin{proof}
 It is clear that $\llrr{s-f_s}_{s\in S}$ is contained in $I$.
 Choose $x=\sum_{i=1}^n\lambda_ic_i\in I$ with $\lambda_i\in k^\times$ and 
 $c_i\in Q_{\geq0}$. We may suppose that $c_1=\mathsf{tip}(x)$, so that $c_1\in\mathsf{tip}(I)$. There is a divisor
 $s$ of $c_1$ such that $s\in\mathsf{tip}(I)$ and $s'\notin\mathsf{tip}(I)$ 
 for all proper divisor $s'$ of $s$ and $s\in S$ by definition of $S$. Let
 $a,c\in Q_{\geq0}$ with $asc=c_1$. 
 
 Define $x'\colonequals af_sc +\sum_{i=2}^n\lambda_ic_i$. We have 
 $x = \lambda_1c_1 + \sum_{i=2}^n\lambda_ic_i = \lambda_1a(s-f_s)c + x'$, 
so that $x'\in I$ and, by property (\ref{lemma:ordenes_item1}) of the 
order $\leq_\omega$, we see that 
 $c_1>\mathsf{tip}(x')$. We can apply this procedure again to $x'$ and iterate: the process
 will stop by property (\ref{lemma:ordenes_item2}) and we conclude that $x\in\llrr{s-f_s}_{s\in S}$.	
\end{proof}

\begin{lemma}
\label{lemma:existencia-sr2}
 Let $\leq_\omega$ and $S$ be as before. The set $\mathcal R\colonequals\{(s,f_s)\}_{s\in S}$ is a reduction system 
 such that every path is reduction-unique.
\end{lemma}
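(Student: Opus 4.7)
My plan is to separate the statement into its two claims and dispatch them in order, beginning with the easier axiomatic check, then reduction-finiteness, then reduction-uniqueness.

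First, I would verify that $\mathcal R = \{(s,f_s)\}_{s\in S}$ satisfies the two standing axioms of a reduction system. The first is immediate: $f_s$ is parallel to $s$ by construction, and $f_s\neq s$ because $f_s<_\omega s$. The second follows from the very definition of $S=\mathsf{Mintip}(I)$, which forbids one element of $S$ from being a divisor of another.

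For reduction-finiteness of an arbitrary path $p$, I would track how the support of $x_n\colonequals r_n\circ\cdots\circ r_1(p)$ evolves along an arbitrary infinite sequence of basic reductions. Combining $f_s<_\omega s$ with property (i) of $\leq_\omega$ (compatibility with concatenation) shows that every basic reduction sends $asc$ to a $k$-linear combination of paths strictly smaller than $asc$ in $\leq_\omega$. An easy induction then gives $\mathrm{supp}(x_n)\subseteq F\colonequals\{q\in Q_{\geq0}:q\leq_\omega p\}$, which is finite by property (ii). Each non-trivial step of the sequence therefore removes some path $asc$ from a finite subset of $F$ and replaces it by strictly $\leq_\omega$-smaller elements of $F$; since the multiset order on finite subsets of the well-ordered set $(F,\leq_\omega)$ is well-founded, only finitely many non-trivial steps can occur, and $(x_n)$ stabilizes.

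For reduction-uniqueness, suppose $r(p)$ and $r'(p)$ are both irreducible. Because every basic reduction shifts its argument by an element of $\llrr{s-f_s}_{s\in S}$, which equals $I$ by Lemma~\ref{lemma:existencia-sr1}, we have $r(p)-r'(p)\in I$. If this difference were non-zero, its $\leq_\omega$-tip $T$ would lie in $\mathsf{tip}(I)$, and a short descent argument among divisors of $T$ shows that every element of $\mathsf{tip}(I)$ admits a divisor in $S$; hence $T=asc$ for some $s\in S$. But $T$ lies in the support of $r(p)$ or of $r'(p)$, both irreducible, and no irreducible element of $kQ$ can have a path of the form $asc$ with $s\in S$ in its support---contradiction. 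Therefore $r(p)=r'(p)$.

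The main obstacle is the termination argument: the support of $x_n$ may temporarily grow in cardinality under a reduction, so neither pointwise nor cardinality-based monotonicity suffices, and one genuinely needs to invoke well-foundedness of the multiset order on finite subsets of $F$ (or, equivalently, a lexicographic induction on the pair $(\mathsf{tip}(x_n),\#\mathrm{supp}(x_n))$). Once this is set up, the remainder of the proof is a formal translation of the Diamond-Lemma template, powered by the two defining properties of $\mathsf{Mintip}(I)$.
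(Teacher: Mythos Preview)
Your proposal is correct and follows essentially the same line as the paper's proof: both establish reduction-finiteness from the compatibility and finiteness properties of $\leq_\omega$, and both derive reduction-uniqueness by showing $r(p)-r'(p)\in I$ via Lemma~\ref{lemma:existencia-sr1} and then reaching a contradiction through $\mathsf{tip}(r(p)-r'(p))$. The only real difference is one of explicitness: where the paper simply asserts that properties (i) and (ii) of $\leq_\omega$ yield reduction-finiteness, you spell out the underlying termination argument via the well-foundedness of the multiset order on subsets of the finite set $F=\{q:q\leq_\omega p\}$, and you also record the preliminary check that $\mathcal R$ satisfies the two standing axioms of a reduction system---both of which the paper leaves implicit.
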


\begin{proof}
 Since $s>_\omega\mathsf{tip}(f_s)$ for all $s\in S$, properties  
 (\ref{lemma:ordenes_item1}) and (\ref{lemma:ordenes_item2}) guarantee 
that every path is reduction-finite. We need to prove that every path is 
reduction-unique. 
 Recall that $\pi$ is the canonical projection
 $kQ\to kQ/I$. Let $p$ be a path. Since $I=\llrr{s-f_s}_{s\in S}$, we see that 
 $\pi(r(p)) = \pi(p)$ for any reduction $r$. Let
 $r$ and $t$ be reductions such that $r(p)$ and $t(p)$ are both irreducible. Clearly,
 $\pi(r(p)-t(p)) = \pi(p)-\pi(p)=0$, so that $r(p)-t(p)\in I$.
 If this difference is not zero, then the path $d=\mathsf{tip}(r(p)-t(p))$ 
can be written as $d=asc$ with $a,c$ paths and $s\in S$. It follows that 
the reduction $r_{a,s,c}$ acts nontrivially either on $r(p)$ or 
 on $t(p)$, and this is a contradiction.
\end{proof}

This lemma implies that for each $s\in S$, 
there exists a reduction $r$ and an irreducible element $f_s'$ such that 
$r(f_s)=f_s'$. Consider the reduction system 
$\mathcal R'\colonequals\{(s,f_s'):s\in S\}$. The set of
irreducible paths for $\mathcal R$ clearly coincides with the set of irreducible paths
for $\mathcal R'$ and, since $\pi(s-f_s')=\pi(s-f_s)=0$, we have that 
$\llrr{s-f_s'}_{s\in S}\subseteq I$. From Bergman's Diamond Lemma it follows
that $I=\llrr{s-f_s'}_{s\in S}$.
We can conclude that the reduction system $\mathcal R'$ satisfies condition $(\lozenge)$, thereby proving
Proposition \ref{prop:existencia_sr}.

\medskip 

It is important to emphasize that different choices of orders on $Q_0\cup Q_1$ and of weights $\omega$ will
give very different reduction systems, some of which will better suit 
our purposes than others. Moreover, there are reduction systems which cannot 
be obtained by this procedure, as the following example shows.

\begin{example}
\label{example:otroorden}
Consider the algebra
\[
  A=k\langle x,y,z \rangle /(x^3+y^3+z^3-xyz)
\]
and let $\mathcal R=\{(xyz,x^3+y^3+z^3)\}$. Clearly this reduction system does not 
come from a monomial order and neither from a monomial order with weights. It is not entirely evident but this
reduction system satisfies $(\lozenge)$.
\end{example}

\medskip 

Finally, we define a relation $\preceq$ on the set $k^\times Q_{\geq0}\colonequals\{\lambda p: \lambda\in k^\times, p\in Q_{\geq0}\}\cup\{0\}$ as the least reflexive and transitive relation such that $\lambda p\preceq \mu q$ whenever there exists a reduction $r$ such that $r(\mu q)= \lambda p + x$ with $p\notin x$. We state $0\preceq \lambda p$ for all $\lambda p\in k^\times Q_{\geq0}$.

\begin{lemma}
 The binary relation $\preceq$ is an order satisfying the descending chain condition and it is compatible with concatenation.
\end{lemma}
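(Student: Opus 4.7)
The plan is to isolate a single key fact about how reductions act on a path and use it to derive all three statements.

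The key fact is: for every reduction $r$ and every path $p$, either $r(p) = p$ or $p$ is not in the support of $r(p)$. I prove it by induction on the length of $r$. For the base case, if a basic reduction $r_{a,s,c}$ acts non-trivially on $p = asc$, it sends $p$ to $af_sc$, and every term $atc$ appearing there is different from $p$ (by Lemma \ref{lemma:leadsto_es_orden}(i), which would otherwise force $r(p) = p$) and satisfies $p \leadsto atc$ via Lemma \ref{lemma:caract_orden_leadsto}. For the inductive step $r = r_{n+1} \circ r'$, if $r'(p) = p$ I reduce to the base case, and otherwise the inductive hypothesis gives, for each term $q$ of $r'(p)$, the relations $p \leadsto q$ and $q \neq p$. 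Applying the base case to each such $q$, every term $q'$ of $r(p)$ still satisfies $p \leadsto q'$; and if $q' = p$, antisymmetry of $\leadsto$ (Lemma \ref{lemma:leadsto_es_orden}(ii)) would force $q = p$, a contradiction. Thus the coefficient of $p$ in $r(p)$ is always $0$ or $1$.

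Antisymmetry of $\preceq$ follows. If one of $\lambda p$ and $\mu q$ is $0$, the other is forced to be $0$, because $0$ never appears on the right-hand side of the generating relation, so any chain ending at $0$ must consist entirely of $0$'s. Assuming both are nonzero, each direct step $\nu_j q_j \preceq \nu_{j+1} q_{j+1}$ yields $q_{j+1} \leadsto q_j$ by Lemma \ref{lemma:caract_orden_leadsto}. Combining the two chains witnessing $\lambda p \preceq \mu q$ and $\mu q \preceq \lambda p$ gives a cycle $p \leadsto \cdots \leadsto p$, and Lemma \ref{lemma:leadsto_es_orden}(ii) then forces every intermediate path in the cycle to equal $p$. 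Each direct step along the cycle thus has the form $\nu p \preceq \nu' p$, arising from some reduction $r$ with $\nu' r(p) = \nu p + x$ and $p \notin x$; the coefficient of $p$ in $r(p)$ is therefore $\nu/\nu' \neq 0$, which by the key fact must equal $1$. Hence $\nu = \nu'$ at every step, so $\lambda = \mu$ and $\lambda p = \mu q$.

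The remaining claims are short. For the descending chain condition on $\ldots \preceq a_2 \preceq a_1 \preceq a_0$, if some $a_i$ equals $0$ all subsequent entries are $0$; otherwise writing $a_i = \lambda_i p_i$ gives a $\leadsto$-descending chain of paths, which stabilizes at some $p$ by Lemma \ref{lemma:leadsto_es_orden}(iii), after which the scalar-preservation argument used for antisymmetry forces the $\lambda_i$ to stabilize too. For compatibility with concatenation, given a reduction $r = r_n \circ \cdots \circ r_1$ with $r_i = r_{a_i, s_i, c_i}$ witnessing a direct relation $\lambda p \preceq \mu q$, I set $\tilde r_i \colonequals r_{aa_i,\, s_i,\, c_i c}$ and $\tilde r \colonequals \tilde r_n \circ \cdots \circ \tilde r_1$; a linearity check gives $\tilde r(ayc) = a r(y) c$ for all compatible $y \in kQ$, so $\tilde r(\mu a q c) = \lambda apc + axc$ with $apc \notin axc$, establishing $\lambda apc \preceq \mu aqc$, and the general case follows by concatenation of chains. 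The main obstacle is the key fact; once the coefficient of $p$ in any $r(p)$ is shown to be $0$ or $1$, all the scalar tracking is forced.
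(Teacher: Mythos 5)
Your proof is correct and follows essentially the same route as the paper's: both reduce everything to the single fact that $p\in r(p)$ forces $r(p)=p$ (your ``key fact'', established from Lemma \ref{lemma:leadsto_es_orden}(i) together with the antisymmetry of $\leadsto$ via Lemma \ref{lemma:caract_orden_leadsto}), and then deduce antisymmetry and the descending chain condition for $\preceq$ from the corresponding properties of $\leadsto$ plus the resulting scalar tracking. The only differences are presentational: you organize the key fact as an induction on the length of the reduction and spell out the zero element and the concatenation compatibility, which the paper dismisses as clear.
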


\begin{proof}
 The second claim is clear. In order to prove the first claim, let us first 
prove that if $p\in Q_{\geq0}$ is such that
 there exists a reduction $r$ with $r(p) = \lambda p +x$ and $p\notin x$, then $\lambda =1$ and $x=0$. Suppose 
 not. For $r$ a basic reduction, this has already been 
done in Lemma 
\ref{lemma:leadsto_es_orden}.
 If $r$ is not basic, then $r=(r_n,\dots,r_1)$ with $r_i$ basic 
and $n\geq2$. Let $r'=(r_n,\dots,r_2)$. Since $p\in r(p)=r'(r_1(p))$, 
there exists $p_1\in r_1(p)$ such that $p\in r'(p_1)$. By the previous 
case, we obtain that $p\notin r_1(p)$, so $p\neq p_1$. As a consequence 
of Lemma \ref{lemma:caract_orden_leadsto}, we know that $p\leadsto 
p_1$ since $p_1\in r_1(p)$ and that $p_1\leadsto p$ since $p\in r'(p_1)$. 
This contradicts the antisymmetry of $\leadsto$.
 
 It is an immediate consequence of the previous fact that given a path $p$ and a reduction $t$,
\begin{equation}
\label{mariano}
 \text{if $t(\lambda_1p)=\lambda_2p + x$ with $p\notin x$, then 
$\lambda_1=\lambda_2$.}
\end{equation}
Let $\lambda_1,\dots,\lambda_{n+1}\in k^\times$, $p_1,\dots,p_{n+1}\in 
Q_{\geq0}$, $x_1,\dots,x_n\in kQ$ and 
reductions $t_1,\dots,t_n$ be such that 
$t_i(\lambda_ip_i)=\lambda_{i+1}p_{i+1} + x_i$, $p_{i+1}\notin x_i$ 
and $\lambda_{n+1} p_{n+1} = \lambda_1p_1$. This implies that $p_i\leadsto 
p_{i+1}$ for each $1\leq i\leq n$ 
and $p_{n+1}=p_1$. Since $\leadsto$ is antisymmetric, it follows that 
$p_i=p_1$ for all $i$ and 
\eqref{mariano} implies that $\lambda_i=\lambda_1$ for all $i$. We thus see 
that $\preceq$ is antisymmetric.

Let now $(\lambda_ip_i)_{i\in\NN}$ be a sequence in $k^\times Q_{\geq0}$ 
and $(t_i)_{i\in\NN}$ a sequence of
reductions such that $t_i(\lambda_ip_i) =\lambda_{i+1}p_{i+1} + x_i$ with 
$p_{i+1}\notin x_i$. Then $p_i\leadsto p_{i+1}$ for all $i$ and since 
$\leadsto$ satisfies the descending chain condition there exists $i_0$ such 
that $p_i=p_{i_0}$ for all $i\geq i_0$. Observation \eqref{mariano} implies 
then that $\lambda_i=\lambda_{i_0}$ for all $i\geq i_0$, so that the 
sequence $(\lambda_ip_i)_{i\in\NN}$ stabilizes.
\end{proof}

If $x=\sum_{i=1}^n\lambda_ip_i\in kQ$ with $\lambda_i\in k^\times$ and 
$\lambda p$ belongs to $k^\times Q_{\geq0}$, we write $x\preceq \lambda p$ 
if $\lambda_ip_i\preceq \lambda p$ for all $i$. If in addition $x\neq 
\lambda p$ we also write $x\prec p$. The following simple fact is key to 
proving everything that follows.

\begin{corollary}
Given a path $p$, its normal form $\beta(p)$ is such that $\beta(p)\preceq p$. Moreover, $\beta(p)\prec p$ if and 
only if $p\notin\B$.
\end{corollary}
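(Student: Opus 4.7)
The plan is to invoke reduction-uniqueness to produce a single reduction that carries $p$ to its normal form, and then to read off both assertions directly from the defining clauses of $\preceq$ and of its extension to $kQ$.

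First I would use condition $(\lozenge)$ to obtain a reduction $r$ with $r(p)$ irreducible, and then invoke Remark \ref{remark:beta} to identify $r(p)=\beta(p)$. Writing $\beta(p)=\sum_{i=1}^n\lambda_i b_i$ with $\lambda_i\in k^\times$ and pairwise distinct $b_i\in\B$, for each fixed $i$ the equality
\[
  r(p)=\lambda_i b_i+\sum_{j\neq i}\lambda_j b_j,
\]
together with the observation that $b_i$ does not appear in the displayed remainder, yields $\lambda_i b_i\preceq 1\cdot p$ by the generating clause in the definition of $\preceq$. By the summand-wise extension of $\preceq$ to $kQ$, this gives $\beta(p)\preceq p$. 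The degenerate case $n=0$, i.e.\ $\beta(p)=0$, is covered by the convention $0\preceq \lambda p$ built into the definition of $\preceq$.

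For the equivalence, if $p\in\B$ then $\beta(p)=p$, so $\beta(p)\not\prec p$. Conversely, if $p\notin\B$ then $\beta(p)$ is either $0$ or a $k$-linear combination of basis elements of $\B$, and in the latter case none of those basis elements can equal $p$ since $p\notin\B$; in either case $\beta(p)\neq p$, which together with $\beta(p)\preceq p$ gives $\beta(p)\prec p$.

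I do not anticipate a real obstacle: the argument is a direct unpacking of the definitions once Remark \ref{remark:beta} has been used to collapse the passage from $p$ to $\beta(p)$ into a single reduction. The only point requiring mild care is the possibility $\beta(p)=0$, which is dispatched by the convention attached to $\preceq$.
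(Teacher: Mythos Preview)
Your proof is correct and follows essentially the same approach as the paper: produce a reduction $r$ with $r(p)=\beta(p)$, read off $\lambda_i b_i\preceq p$ for each summand from the defining clause of $\preceq$, and conclude $\beta(p)\preceq p$, then handle the equivalence via $\beta(p)=p\iff p\in\B$. Your treatment is somewhat more careful about the degenerate case $\beta(p)=0$, but otherwise the two arguments coincide.
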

\begin{proof}
 There is a reduction $r$ such that 
$\beta(p)=r(p)=\sum_{i=1}^n\lambda_ip_i$. It is clear that 
$\lambda_ip_i\preceq p$ for all $i$, so that $\beta(p)\preceq p$. The last 
claim follows from the fact that $\beta(p)=p$ if and only if $p\in\B$.
\end{proof}

\section{Ambiguities}
\label{s:Proj_modules}

Given an algebra $A=kQ/I$ and a reduction system $\mathcal{R}$ satisfying $(\lozenge)$ for $I$, there is a 
monomial algebra associated to $A$ defined as $A_{S}\colonequals 
kQ/\langle S\rangle$ and equipped with the canonical projection 
$\pi':kQ\to A_{S}$. The set $\pi'(\B)$ is a $k$-basis of 
$A_{S}$. 
The algebra $A_S$ is a generalization of the algebra $A_{mon}$ defined in \cite{GM}: in that article, the order
is necessarily monomial.

From now on we fix the reduction system $\mathcal R$ satisfying condition $(\lozenge)$. Notice that in this situation we can suppose without loss of generality, that 
$S\subseteq Q_{\geq2}$.

The family of modules $\{{\mathcal{P}_i}\}_{i\ge 0}$ appearing in the resolution of 
$A$ as $A$-bimodule will be in bijection with those appearing in Bardzell's resolution
of the monomial algebra $A_{S}$.
More precisely, we will define $E$-bimodules $k\A_i$ for $i\ge -1$, such 
that the former will be $\{A\otimes_E k\A_i \otimes_E A\}_{i\geq-1}$
while the latter will be $\{A_{S}\otimes_E k\A_i \otimes_E 
A_{S}\}_{i\geq-1}$.
The resolution will start as usual: $\A_{-1} = Q_0$, $\A_0=Q_1$ and $\A_1=S$.

For $n\geq2$, $\A_n$ will be the set of
$n$-ambiguities of $\mathcal R$. We will next recall the definition of $n$-ambiguity
-- or $n$-chain according to the terminology used in \cite{Sk}, \cite{An}, 
\cite{AG} and 
to Bardzell's \cite{Ba} \textit{associated sequences of paths}, 
and we will take into account that the sets of left
$n$-ambiguities and right $n$-ambiguities coincide. This fact is proved in
\cite{Ba} and also in \cite{Sk}. 
 See  \cite{GZ} too.

\begin{definition} Given $n\geq2$ and $p\in Q_{\geq0}$,
 \begin{enumerate}
  \item the path $p$ is a {\em left} $n$-{\em ambiguity} if there exist $u_0\in Q_1$, 
   $u_1,\dots,u_n$ irreducible paths such that
    \begin{enumerate}[(i)]
      \item $p=u_0u_1\cdots u_n$,
      \label{def:nambiguedadiz1}
      \item for all $i$, $u_iu_{i+1}$ is reducible but $u_id$ is irreducible
	for any proper left divisor $d$ of $u_{i+1}$.
      \label{def:nambiguedadiz2}
    \end{enumerate}
   \item the path $p$ is a {\em right} $n$-{\em ambiguity} if there exist $v_0\in Q_1$ and
   $v_1,\dots,v_n$ irreducible paths such that
    \begin{enumerate}[(i)]
     \item $p=v_n\cdots v_0$,
     \item for all $i$, $v_{i+1}v_i$ is reducible but $dv_i$ is irreducible
	for any proper right divisor of $v_{i+1}$.
    \end{enumerate}
 \end{enumerate}
\end{definition}

\begin{proposition}
 Let $n,m\in\NN$, $p\in Q_{\geq1}$. If $u_0,\hat u_0\in Q_1$ and $u_1,\dots
 u_n,\hat u_1,\dots,\hat u_n$ are paths in $Q$ such that both $u_0,\dots,u_n$
 and $\hat u_0,\dots,\hat u_n$ satisfy conditions (\ref{def:nambiguedadiz1})
 and (\ref{def:nambiguedadiz2}) of the previous definition for $p$, then 
 $n=m$ and $u_i=\hat u_i$ for all $i$, $0\leq i\leq n$.
\end{proposition}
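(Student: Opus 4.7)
The plan is to prove the uniqueness by induction on the index $i$, comparing prefixes of $p$ and using condition (ii) of the left ambiguity definition as a minimality criterion.

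First I would settle the base case: since $p = u_0 u_1 \cdots u_n = \hat u_0 \hat u_1 \cdots \hat u_m$ with $u_0, \hat u_0 \in Q_1$, both $u_0$ and $\hat u_0$ equal the first arrow of $p$, so $u_0 = \hat u_0$.

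For the inductive step, suppose $u_j = \hat u_j$ for all $0 \le j \le i$, and that $u_{i+1}$ and $\hat u_{i+1}$ are both defined. Since these two paths start at the same position of $p$ (namely, right after $u_i = \hat u_i$ ends), one is a left divisor of the other; assume without loss of generality that $|u_{i+1}| \le |\hat u_{i+1}|$. If the inequality is strict, then $u_{i+1}$ is a \emph{proper} left divisor of $\hat u_{i+1}$. Applying condition (\ref{def:nambiguedadiz2}) of the definition to the decomposition $(\hat u_0, \dots, \hat u_m)$ at index $i$, the path $\hat u_i u_{i+1}$ must be irreducible. On the other hand, condition (\ref{def:nambiguedadiz2}) applied to $(u_0, \dots, u_n)$ at index $i$ says $u_i u_{i+1}$ is reducible. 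Since $u_i = \hat u_i$, this is a contradiction. Hence $|u_{i+1}| = |\hat u_{i+1}|$, and being left divisors of each other with equal length, $u_{i+1} = \hat u_{i+1}$.

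Once the pointwise equality is established for all indices where both sides are defined, the equality $n=m$ follows from a length argument. Assume without loss of generality that $n \le m$. By the induction we get $u_i = \hat u_i$ for $0 \le i \le n$, and comparing the two factorizations of $p$ forces $\hat u_{n+1} \cdots \hat u_m$ to be the empty path. But each $\hat u_j$ with $j \ge 1$ has length at least one: indeed, if some $\hat u_j$ were a vertex, then $\hat u_j \hat u_{j+1} = \hat u_{j+1}$ would be simultaneously reducible (by (\ref{def:nambiguedadiz2})) and irreducible (by (\ref{def:nambiguedadiz1})). Hence $m = n$.

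The main point where one has to be careful is the comparison of prefixes in the inductive step: the whole argument rests on the observation that condition (\ref{def:nambiguedadiz2}) characterizes $u_{i+1}$ as the \emph{shortest} extension of $u_i$ (within the fixed path $p$) making the concatenation reducible, and this minimality is exactly what forces two candidate extensions of equal prefix $u_i=\hat u_i$ to coincide. No deeper use of the reduction system or of $(\lozenge)$ is needed — only the definition itself.
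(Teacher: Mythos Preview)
Your proof is correct and follows essentially the same route as the paper: induction on the index~$i$, comparing the two candidate factors starting at the same position and using condition~(\ref{def:nambiguedadiz2}) as a minimality criterion to force equality. Your version is in fact a bit more streamlined---the paper treats $i=1$ separately via the fact that $u_0u_1,\hat u_0\hat u_1\in S$ and elements of $S$ cannot properly divide one another, whereas your inductive step already covers that case; one tiny cosmetic point is that in your length argument at the end, for $j=m$ you should invoke $\hat u_{m-1}\hat u_m$ rather than $\hat u_m\hat u_{m+1}$, but the conclusion is the same.
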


\begin{proof}
 Suppose $n\leq m$. It is obvious that $u_0 = \hat u_0$, since both of them
 are arrows. Notice that $kQ = T_{kQ_0} kQ_1$, that is the free algebra 
 generated by $kQ_1$ over $kQ_0$, which implies that either $u_0u_1$ divides 
 $\hat u_0\hat u_1$ or $\hat u_0\hat u_1$ divides $u_0u_1$, and moreover 
 $u_0u_1,\hat u_0 \hat u_1\in\A_1= S$. Remark \ref{remark:bprops_sb} 
 says that $u_0u_1=\hat u_0\hat u_1$. Since $u_0=\hat u_0$, we must have 
 $u_1=\hat u_1$. By induction on $i$, let us suppose that $u_j=\hat u_j$ 
 for $j\leq i$. As a consequence, 
 $u_{i+1}\cdots u_n = \hat u_{i+1}\cdots\hat u_m$. 
 
 If $i+1=n$, this reads $u_n = \hat u_n\cdots\hat u_{m}$, and the fact that 
 $u_n$ is irreducible and $\hat u_j\hat u_{j+1}$ is reducible for all $j<m$
 implies that $m=n$ and $u_n=\hat u_n$. Instead, suppose that $i+1<n$. From
 the equality $u_{i+1}\cdots u_n=\hat u_{i+1}\cdots\hat u_m$ we deduce that
 there exists a path $d$ such that $u_{i+1}=\hat u_{i+1}d$ or $\hat u_{i+1}
 = u_{i+1}d$. If $u_{i+1}=\hat u_{i+1}d$ and $d\in Q_{\geq1}$, we can write
 $d=d_2d_1$ with $d_1\in Q_1$. The path $\hat u_{i+1}d_2$ is a proper left
 divisor of $u_{i+1}$ and by condition (\ref{def:nambiguedadiz2}) we obtain
 that $u_i\hat u_{i+1}d_2$ is irreducible. This is absurd since $u_i\hat 
 u_{i+1}d_2 = \hat u_i\hat u_{i+1}d_2$ by inductive hypothesis, and the 
 right hand  term is reducible by condition (\ref{def:nambiguedadiz2}). It
 follows that $d\in Q_0$ and then $u_{i+1}=\hat u_{i+1}$. The case where 
 $\hat u_{i+1} = u_{i+1}d$ is analogous.
\end{proof}

\begin{corollary}
 Given $n,m\geq-1$, $\A_n\cap\A_m=\emptyset$ if $n$ and $m$ are different.
\end{corollary}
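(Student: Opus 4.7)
My plan is to reduce the corollary to three kinds of comparisons, according to how small $n$ and $m$ are.

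\textbf{Low-degree cases by length.} First I would observe that $\mathcal{A}_{-1}=Q_0$ consists of paths of length $0$, $\mathcal{A}_0=Q_1$ of paths of length $1$, and every element of $\mathcal{A}_n$ for $n\geq 1$ has length at least $2$. Indeed, $\mathcal{A}_1=S\subseteq Q_{\geq 2}$ by assumption, and for $n\geq 2$ any left $n$-ambiguity $p=u_0u_1\cdots u_n$ has $|p|\geq n+1\geq 3$ because each $u_i$ with $i\geq 1$ must have positive length (otherwise $u_{i-1}u_i=u_{i-1}$ would be irreducible, contradicting condition (ii) of the definition). Thus the three sets $\mathcal{A}_{-1}$, $\mathcal{A}_0$, $\bigcup_{n\geq 1}\mathcal{A}_n$ are pairwise disjoint just by length, and this disposes of every pair involving $\mathcal{A}_{-1}$ or $\mathcal{A}_0$.

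\textbf{Comparing $\mathcal{A}_1$ with $\mathcal{A}_n$ for $n\geq 2$.} Suppose $p\in\mathcal{A}_1\cap\mathcal{A}_n$ with $n\geq 2$, so $p\in S$ and $p=u_0u_1\cdots u_n$ as in the definition of left $n$-ambiguity. As noted in the proof of the preceding proposition, the only divisor of $u_0u_1$ lying in $S$ is $u_0u_1$ itself, hence $u_0u_1\in S$. Since $u_2\cdots u_n$ has positive length, $u_0u_1$ is a \emph{proper} divisor of $p$. Thus we would have two distinct elements of $S$, namely $u_0u_1$ and $p$, one properly dividing the other; this contradicts Remark \ref{remark:bprops_sb}(2).

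\textbf{Comparing $\mathcal{A}_n$ and $\mathcal{A}_m$ for $n,m\geq 2$.} This case is immediate from the previous proposition: if $p$ belongs to both $\mathcal{A}_n$ and $\mathcal{A}_m$ then $p$ admits decompositions $u_0u_1\cdots u_n$ and $\hat u_0\hat u_1\cdots\hat u_m$ satisfying conditions (i) and (ii), and the proposition forces $n=m$.

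\textbf{Main obstacle.} The three cases are self-contained and none is really hard; the only subtle point is the middle one, where one must justify that in a left $n$-ambiguity the initial block $u_0u_1$ genuinely lies in $S$ and is a proper divisor of $p$. Both facts follow from conditions (i)--(ii) combined with Remark \ref{remark:bprops_sb}, but it is worth being explicit about the positive length of the tail $u_2\cdots u_n$ to ensure the divisor is proper.
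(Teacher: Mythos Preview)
Your argument is correct and follows the same line as the paper, which leaves the corollary unproved because it is meant to be immediate from the proposition together with obvious length considerations for $\mathcal{A}_{-1}$ and $\mathcal{A}_0$. One small simplification: your middle case ($\mathcal{A}_1$ versus $\mathcal{A}_n$, $n\geq 2$) is already absorbed by the proposition, since it is stated for $n,m\in\mathbb{N}$ and every $s\in S=\mathcal{A}_1$ admits a decomposition $s=u_0u_1$ satisfying conditions (i)--(ii); so you need only two cases, not three.
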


Just to get a flavor of what $\A_n$ is, one may think about an element of 
$\A_n$ as a minimal proper superposition of $n$ elements of $S$.

\medskip

We end this section with a proposition that indicates how to compute ambiguities for a particular family of algebras.

\begin{proposition}
\label{cuadratico}
Suppose $S\subset Q_2$. For all $n\ge 1$, 
\[
\A_n = \{\alpha_0\dots\alpha_n \in Q_{n+1}: \alpha_i\in Q_1 \hbox{ for all } i \hbox{ and } \alpha_{i-1}\alpha_i \in S\}
\]
Moreover, given $p=\alpha_0\dots\alpha_n \in \A_n$, we can write $p$ as a left ambiguity choosing $u_i=\alpha_i$, for all $i$, and as a right ambiguity choosing
$v_i=\alpha_{n-i}$
\end{proposition}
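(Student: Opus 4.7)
The plan is to prove both inclusions of the set equality together with the explicit decompositions, handling the base case $n=1$ separately. For $n=1$ there is nothing to do: $\A_1=S\subset Q_2$, so every element is of the form $\alpha_0\alpha_1$ with $\alpha_0\alpha_1\in S$, and the claimed left/right decompositions are forced. So I would focus on $n\geq 2$ and argue by direct verification of the defining conditions of left (resp.\ right) $n$-ambiguity.

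For the $(\supseteq)$ inclusion, suppose $p=\alpha_0\cdots\alpha_n$ with each $\alpha_{i-1}\alpha_i\in S$. Set $u_i=\alpha_i$. Then $u_0\in Q_1$ by construction, and each $u_i$ is an arrow; since $S\subseteq Q_{\geq 2}$, no element of $S$ can divide a path of length $1$, so every arrow is irreducible. For condition (ii) of the definition, $u_iu_{i+1}=\alpha_i\alpha_{i+1}\in S$ is reducible, and the only proper left divisor $d$ of $u_{i+1}$ is the vertex $\mathsf{s}(u_{i+1})$, so $u_id=u_i$ is an arrow and therefore irreducible. This shows $p\in\A_n$ and simultaneously gives the explicit left-ambiguity decomposition.

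For the $(\subseteq)$ inclusion, suppose $p=u_0u_1\cdots u_n$ is a left $n$-ambiguity. I would first show that $|u_i|=1$ for every $i\geq 1$; this is where I expect the only real work. Since $u_{i-1}u_i$ is reducible, by Remark \ref{remark:bprops_sb} some $s\in S$ divides $u_{i-1}u_i$. As $|s|=2$ and both $u_{i-1}$, $u_i$ are irreducible, $s$ cannot be a divisor of either of them alone, so $s$ must consist of the last arrow of $u_{i-1}$ followed by the first arrow of $u_i$. If $|u_i|>1$, the first arrow of $u_i$ would be a proper left divisor $d$ of $u_i$, and $u_{i-1}d$ would be divisible by $s$, hence reducible, contradicting condition (ii) in the definition of left $n$-ambiguity. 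Therefore $|u_i|=1$, and consequently $u_{i-1}u_i$ is a length-two path divisible by the length-two element $s\in S$, which forces $u_{i-1}u_i=s\in S$. Putting $\alpha_i=u_i$ yields the claimed form of $p$.

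Finally, for the moreover part regarding the right decomposition, setting $v_i=\alpha_{n-i}$, the conditions are symmetric to those just verified: $v_n\cdots v_0=p$, $v_0=\alpha_n\in Q_1$, each $v_i$ is an arrow (hence irreducible), $v_{i+1}v_i=\alpha_{n-i-1}\alpha_{n-i}\in S$ is reducible, and the unique proper right divisor of the arrow $v_{i+1}$ is the vertex $\mathsf{t}(v_{i+1})$, so $dv_i=v_i$ is irreducible. The main obstacle throughout is the length argument in the reverse direction; everything else is bookkeeping with the quadratic hypothesis $S\subset Q_2$.
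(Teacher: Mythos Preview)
Your proof is correct and rests on the same key observation as the paper's: because $S\subset Q_2$, any $s\in S$ dividing $u_{i-1}u_i$ must straddle the boundary, and condition~(ii) then forces $|u_i|=1$. The only organizational difference is that the paper argues by induction on $n$, using that $u_0\cdots u_n\in\A_n$ to conclude $u_n\in Q_1$ first and then treating only the new factor $u_{n+1}$, whereas you prove $|u_i|=1$ for every $i\ge 1$ in one stroke, without induction, by letting $s$ consist of the last arrow of $u_{i-1}$ and the first arrow of $u_i$. Your direct route is slightly cleaner since it never needs $|u_{i-1}|=1$ to run the straddling argument; the paper's inductive packaging is marginally shorter on the page because it handles one index at a time. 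Either way, the content is the same.
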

\begin{proof}
We proceed by induction on $n$. If $n=1$ we know that $\A_1=S$ in which case
there is nothing to prove. Let $u_0\cdots u_nu_{n+1}\in\A_{n+1}$ and suppose 
that the result holds for all $p\in\A_n$.
Since $u_0\cdots u_n$ belongs to $\A_n$ we only have
to prove that $u_{n+1}\in Q_1$ and that $u_nu_{n+1}\in S$. We know that 
$u_n\in Q_1$, that $u_{n+1}$ is irreducible and that $u_nu_{n+1}$ is reducible. As a consequence, there exist $s\in S$ and $v\in Q_{\geq0}$ such that $u_nu_{n+1}=sv$. Moreover, $u_nd$ is irreducible for any proper left divisor $d$
of $u_{n+1}$, so the only possibility is  $v\in Q_0$. We conclude that $u_nu_{n+1}$ belongs to $S$. Since $S\subseteq Q_2$ and $u_n\in Q_1$, we deduce that $u_{n+1}\in Q_1$.
This proves that 
$\A_{n+1}\subseteq \{\alpha_0\cdots\alpha_n\in Q_{n+1}: \alpha_i\in Q_1 \hbox{ for all } i \hbox{ and }\alpha_{i-1}\alpha_i \in S\}$. 

The other inclusion is
clear.
\end{proof}



\section{The resolution}
\label{s:Resolution}

In this section our purpose is to construct bimodule resolutions of the 
algebra $A$. We achieve this in Theorems \ref{teo1} and \ref{teo2}: in the 
first one
we construct homotopy maps to prove that a given complex is exact, while in the second one we define differentials inductively.

We will make use of differentials of Bardzell's resolution for monomial 
algebras, so we begin this section by recalling them. Keeping 
the notations of the previous section, note that the $kQ$-
bimodule $kQ\otimes_E k\A_n\otimes_E kQ$ is  a $k$-vector space with basis
$\{a\otimes p\otimes c : a,c\in Q_{\geq0}, p\in\A_n, apc\neq0\mbox{ in }kQ\}$.

As we have already done for $A$, we define a $k$-linear map $i': A_S \to kQ$ such that  be 
$i'(\pi' (b))) = b$ for all $b\in  \B$, and we denote by $\beta': kQ \to kQ$ the 
composition $i'\circ \pi'$. 
 
Given $n\geq-1$, let us fix notation for the following $k$-linear maps:
\[
 \begin{aligned}
  &\pi_n\colonequals \pi\otimes\id_{k\A_n}\otimes\pi, &
    &\hskip0.5cm\pi_n'\colonequals \pi'\otimes\id_{k\A_n}\otimes\pi',\\
  &i_n\colonequals i\otimes\id_{k\A_n}\otimes i, &
    &\hskip0.5cmi_n'\colonequals i'\otimes\id_{k\A_n}\otimes i',\\
  &\beta_n \colonequals i_n\circ\pi_n, &
    &\hskip0.5cm\beta_n'\colonequals i_n'\circ\pi_n'.
 \end{aligned}
\]


Consider the following sequence of $kQ$-bimodules,
{\footnotesize
\[
 \xymatrix{\cdots\ar[r]^-{f_2}&kQ\otimes_Ek\A_1\otimes_E kQ\ar[r]^-{f_1}&
	    kQ\otimes_Ek\A_0\otimes_EA\ar[r]^-{f_0}&
	    kQ\otimes_EkQ\ar[r]^-{f_{-1}}\ar[d]^-{\cong}&
	    kQ\ar[r]&0\\
   & & & kQ\otimes_Ek\A_{-1}\otimes_EkQ&}
\]}
where 
\begin{enumerate}[(i)]
 \item $f_{-1}(a\otimes b) = ab$,
 \item if $n$ is even, $q\in\A_n$ and $q=u_0\cdots u_n=v_n\cdots v_0$ are
 respectively the factorizations of $q$ as left and right $n$-ambiguity,
  \[
    f_n(1\otimes q\otimes 1) = v_n\otimes v_{n-1}\cdots v_0\otimes 1 - 
				  1\otimes u_0\cdots u_{n-1}\otimes u_n,
  \]
 \item if $n$ es odd and $q\in\A_n$,
 \[
  f_n(1\otimes q\otimes 1) = 
      \underset{p\in\A_{n-1}}{\underset{apc=q}{\sum}}a\otimes p\otimes c.
 \]
\end{enumerate}
The maps $f_n$ induce, respectively, $A$-bimodule maps 
\[
\delta_n:A\otimes_E k\A_n\otimes_EA\to A\otimes_Ek\A_{n-1}\otimes_EA
\]
where
\[
\delta_n\colonequals \pi_{n-1}\circ f_n\circ i_n, 
\]
and $A_{S}$-bimodule maps 
\[\delta_n':A_{S}\otimes_Ek\A_n\otimes_EA_{S}\to
A_{S}\otimes_Ek\A_{n-1}\otimes_EA_{S}
\]
defined by 
\[
 \delta_n'\colonequals \pi_{n-1}'\circ f_n\circ i_n'.
\]
Observe that $\delta_{-1}$ and $\delta_{-1}'$ are respectively 
multiplication in $A$ and in $A_S$.

\medskip

The algebra $A_S$ is monomial. The following complex provides a projective
resolution of $A_{S}$ as $A_{S}$-bimodule \cite{Ba}:
{\footnotesize
\[
 \xymatrix{\cdots\ar[r]^-{\delta_2'}&A_S\otimes_Ek\A_1\otimes_EA_S\ar[r]^-{\delta_1'}&
	    A_S\otimes_Ek\A_0\otimes_EA_S\ar[r]^-{\delta_0'}&
	    A_S\otimes_EA_S\ar[r]^-{\delta_{-1}'}&
	    A_S\ar[r]&0}.
\]}
We will make use of the homotopy that Sk\"oldberg defined in \cite{Sk} 
when proving that this complex is exact.
We recall it, but we must stress that our signs differ from
the ones in \cite{Sk} due to the fact that he considers right modules, 
while we always work with left modules. 

Given $n\ge -1$, the morphism of $kQ-E$-bimodules $S_n$ 
is defined as follows. 

For $n=-1$, $S_{-1}:kQ\to kQ\otimes_Ek\A_{-1}\otimes_E kQ$ is the $kQ-E$-bimodule
map given by $S_{-1}(a)=a\otimes1$, for $a\in kQ$. 

For $n\in\NN_0$,
$S_n:kQ\otimes_Ek\A_{n-1}\otimes_E kQ\to kQ\otimes_Ek\A_n\otimes_EkQ$ is given
by
\[
 S_n(1\otimes q\otimes b) = 
 (-1)^{n+1}\underset{p\in\A_n}{\sum_{apc=qb}}a\otimes p\otimes c.
\]
Let $s_n'=\pi_n'\circ S_n\circ i_{n-1}'$.
The family of maps $\{s_n'\}_{n\geq-1}$ verifies the
equalities
\[
s_n'\circ\delta_n' + \delta_{n-1}'\circ s_{n-1}'= 
  \id_{A_S\otimes_Ek\A_n\otimes_EA_S} \hbox{ for } n\ge 0 \hbox{ and }s_{-1}'\circ\delta_{-1}'= 
  \id_{A_S\otimes_Ek\A_{-1}\otimes_EA_S}.
\]
  
\medskip
  
Next we define some sets that will be useful in the sequel.
For any $n\geq -1$ and $\mu q\in k^\times Q_{\geq0}$, consider the following 
subsets of $kQ\otimes_Ek\A_n\otimes_EkQ$: 
 \begin{itemize}
 \item $\LL_n^\preceq(\mu q) \colonequals 
 \{\lambda a\otimes p\otimes c: a,c\in Q_{\geq0}, p\in\A_n, \lambda apc \preceq \mu q\},$
 \item $\LL_n^\prec(\mu q) \colonequals 
 \{\lambda a\otimes p\otimes c: a,c\in Q_{\geq0}, p\in\A_n,\lambda apc \prec \mu q\},$
 \end{itemize}
and the following subsets of $A\otimes_Ek\A_n\otimes_EA$:
\begin{itemize}
  \item $\overline\LL_n^\preceq(\mu q) \colonequals
  \{\lambda\pi(b)\otimes p\otimes\pi(b'): b,b'\in\B, p\in\A_n, \lambda bpb'\preceq \mu q\}$,
  \item $\overline\LL_n^\prec(\mu q) \colonequals
  \{\lambda\pi(b)\otimes p\otimes\pi(b'): b,b'\in\B, p\in\A_n, \lambda bpb'\prec \mu q\}$.
\end{itemize}

\begin{remark}
\label{remark1}
 We observe that 
\[
  \begin{aligned}
  f_{n+1}(x)\in\,&\llrr{\LL_{n}^\preceq(\mu q)}_\ZZ, & &\mbox{ for all }
  x\in\LL_{n+1}^\preceq(\mu q)\mbox{, and }\\
  S_n(x)\in\,&\llrr{\LL_n^\preceq(\mu q)}_\ZZ, & &\mbox{ for all }
  x\in\LL_{n-1}^\preceq(\mu q).
 \end{aligned}
\]
Moreover, the only possible coefficients appearing in the linear combinations are $+1$ and $-1$.
\end{remark}

We will now state the main theorems. Recall that our aim is to 
construct, for non necessarily monomial algebras, a bimodule resolution 
starting from a related monomial algebra. The first theorem 
says that if the difference between its differentials and the monomial differentials
can be ``controlled'', then we will actually obtain an exact complex. The 
second theorem says that it is possible to construct the differentials.

\begin{theorem}
\label{teo1}
 Set $d_{-1}\colonequals\delta_{-1}$ and $d_0\colonequals\delta_0$. Given $N\in\NN_0$ and morphisms of $A$-bimodules 
 $d_i:A\otimes_Ek\A_i\otimes_EA\to A\otimes_Ek\A_{i-1}\otimes_EA$ for 
 $1\leq i\leq N$. If
 \begin{enumerate}
  \item $d_{i-1}\circ d_i=0$ for all $i$, $1\leq i\leq N$,
  \item $(d_i-\delta_i)(1\otimes q\otimes 1)\in 
   \llrr{\overline{\LL}_{i-1}^\prec(q)}_k$ for all $i\in\{1,\dots,N\}$ and for
    all $q\in\A_i$,
 \end{enumerate}
 then the complex
 \[
  \xymatrix{A\otimes_Ek\A_N\otimes_EA\ar[r]^-{d_N}&\cdots\ar[r]^-{d_1}&
            A\otimes_Ek\A_0\otimes_EA\ar[r]^-{d_0}&
            A\otimes_EA\ar[r]^-{d_{-1}}&
            A\ar[r]&0}
 \]
 is exact.
\end{theorem}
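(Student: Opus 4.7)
The plan is to deduce exactness of $(A\otimes_E k\A_\bullet\otimes_E A,d_\bullet)$ from the known exactness of the Bardzell--Sk\"oldberg complex $(A_S\otimes_E k\A_\bullet\otimes_E A_S,\delta'_\bullet)$ by a filtration argument governed by the order $\preceq$. Both complexes share the same underlying graded $E$-bimodule $k\B\otimes_E k\A_\bullet\otimes_E k\B$, via the bases $\{\pi(b)\otimes p\otimes\pi(b')\}$ and $\{\pi'(b)\otimes p\otimes\pi'(b')\}$ respectively; so the issue is only to transfer exactness from $\delta'_\bullet$ to $d_\bullet$.

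For each $\mu q\in k^\times Q_{\geq 0}$, set $V^{\preceq\mu q}_n\colonequals\llrr{\overline{\LL}^\preceq_n(\mu q)}_k$ and $V^{\prec\mu q}_n\colonequals\llrr{\overline{\LL}^\prec_n(\mu q)}_k$. Using Remark \ref{remark1} on the $kQ$-side, together with the compatibility of $\preceq$ with concatenation and the fact that $\beta(p)\preceq p$, I would first verify that $\delta_n$ preserves the filtration $V^{\preceq\mu q}_\bullet$ on each generator $\pi(b)\otimes q\otimes\pi(b')$ (with associated label $\mu bqb'$, say). The hypothesis $(d_n-\delta_n)(1\otimes q\otimes 1)\in\llrr{\overline{\LL}^\prec_{n-1}(q)}_k$ combined with $A$-bilinearity then gives the same for $d_n$, and moreover shows that on the associated graded $V^{\preceq\mu q}_\bullet/V^{\prec\mu q}_\bullet$ the differential induced by $d_n$ coincides with the one induced by $\delta_n$. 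Since the basis identification above matches $\delta_n$ with $\delta_n'$, the associated graded complex is isomorphic to Sk\"oldberg's, hence exact. The filtration is bounded below by $0$ and exhaustive, and is indexed by the poset $(k^\times Q_{\geq 0},\preceq)$, which satisfies the descending chain condition.

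Exactness of the filtered complex is then extracted from exactness of the associated graded by a standard ``leading term'' argument: given a cycle $z$, pick a maximal filtration label $\mu q$ attained by a term in the finite support of $z$; its image in the associated graded piece is a cycle, hence a boundary; lifting any preimage $w$ and replacing $z$ by $z-d_{n+1}(w)$ strictly reduces the contribution at $\mu q$ without introducing new terms at strictly larger labels. Iterating, and using DCC to control the process as maximal labels decrease, we express $z$ as a boundary. The main obstacle will be handling this iteration rigorously, because $\preceq$ is only a partial order and a cycle can have several pairwise incomparable maximal support terms; a clean fix is to pick a linear refinement of $\preceq$ which still satisfies DCC (this exists by a standard order-theoretic argument, since $\preceq$ itself does) and run the induction along it, so that a single well-ordered invariant strictly decreases at each step. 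A secondary technical point is the precise unpacking of the $\pi$, $i$ ingredients in $\delta_n$ needed to match $\delta_n$ with $\delta_n'$ on the associated graded, which ultimately reduces to comparing their effect on basis elements modulo strictly lower-order corrections.
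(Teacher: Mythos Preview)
Your approach is correct and genuinely different from the paper's. The paper does not run a filtration or spectral-sequence argument; instead it builds an explicit $A$--$E$-bimodule contracting homotopy $\rho_\bullet$ level by level. Concretely, it first writes down $\rho_0$ with $d_0\rho_0+\rho_{-1}d_{-1}=\id$, and then invokes Proposition~\ref{propfinal}: given $d_0,\dots,d_{n+1}$ and $\rho_{-1},\dots,\rho_n$ satisfying the order-theoretic estimates, one constructs $\rho_{n+1}$ as $s_{n+1}$ plus a correction obtained by transfinite induction on $(k^\times Q_{\geq0},\preceq)$. The technical content you would need (that $\delta_n$, $s_n$ and hence $d_n$ respect the filtration, and that on the associated graded everything agrees with the monomial picture) is exactly Lemmas~\ref{lemma2}--\ref{lemma5} and Corollary~\ref{coro:diff_y_ord} of the paper, so the two proofs share the same analytic core.

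What your route buys is a cleaner, more conceptual argument for Theorem~\ref{teo1} alone: once you know the $A_S$ complex decomposes as a direct sum over concatenation labels $q$ and that each summand is exactly the associated graded piece of your filtration, the DCC induction you sketch (best carried out, as you note, along a well-ordered refinement of $\preceq$, which exists by the rank function) finishes the job. What the paper's route buys is the homotopy $\rho_\bullet$ itself: it is not a byproduct but a tool. In Proposition~\ref{propfinal}\,(\ref{propfinal1}) the next differential is \emph{defined} by $d_{n+1}(1\otimes q\otimes1)=\delta_{n+1}(1\otimes q\otimes1)-\rho_n\bigl(d_n\delta_{n+1}(1\otimes q\otimes1)\bigr)$, so Theorem~\ref{teo2} (existence of the resolution) depends on having $\rho_n$ in hand. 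Your argument proves exactness but does not directly produce a structured left inverse usable for that inductive construction.

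One small point worth tightening in your write-up: the scalar component of the labels in $k^\times Q_{\geq0}$ is a nuisance (distinct scalar multiples of the same path are $\preceq$-incomparable). It is cleaner to run the filtration over $(Q_{\geq0},\leadsto)$, which carries the same DCC (Lemma~\ref{lemma:leadsto_es_orden}); then the associated graded at $q$ is visibly the $q$-summand of the Sk\"oldberg complex, and your leading-term induction goes through without juggling scalars.
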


\begin{theorem}
\label{teo2}
 There exist $A$-bimodule morphisms 
 $d_i:A\otimes_Ek\A_i\otimes_EA\to A\otimes_Ek\A_{i-1}\otimes_EA$ for
 $i\in\NN_0$ and $d_{-1}:A\otimes_EA\to A$ such that 
 \begin{enumerate}
  \item $d_{i-1}\circ d_i=0$, for all $i\in\NN_0$,
  \item $(d_i-\delta_i)(1\otimes q\otimes1)\in
   \llrr{\overline{\LL}_{i-1}^\prec(q)}_\ZZ$ for all $i\geq-1$ and $q\in\A_i$.
 \end{enumerate}
\end{theorem}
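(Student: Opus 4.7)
I proceed by induction on $n$. Take $d_{-1}$ to be the multiplication of $A$ and $d_0 := \delta_0$; both satisfy conditions (1) and (2) trivially. Assume that $d_{-1}, d_0, \ldots, d_{n-1}$ have been constructed satisfying the two conditions. By $A$-bilinearity it is enough to define $d_n(1 \otimes q \otimes 1)$ for each $q \in \A_n$.

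For a fixed $q \in \A_n$ I build $d_n(1 \otimes q \otimes 1)$ as a perturbation of $\delta_n(1 \otimes q \otimes 1)$, iteratively correcting the failure of the chain condition $d_{n-1} \circ d_n = 0$ by means of Sk\"oldberg's contracting homotopy $s'$ for Bardzell's complex of the monomial algebra $A_S$. Set $d_n^{(0)} := \delta_n(1 \otimes q \otimes 1)$ and $\phi_{n-1} := d_{n-1} - \delta_{n-1}$. Using $\delta_{n-1} \circ \delta_n = 0$, the inductive hypothesis, the compatibility of $\preceq$ with concatenation, and the fact that $\beta(p) \preceq p$ for every path $p$, one checks that
\[
  \epsilon^{(0)} := d_{n-1}(d_n^{(0)}) = \phi_{n-1}(\delta_n(1 \otimes q \otimes 1))
\]
lies in $\llrr{\overline{\LL}_{n-2}^\prec(q)}_\ZZ$. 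Via the basis $\B \times \A_m \times \B$ common to $A \otimes_E k\A_m \otimes_E A$ and $A_S \otimes_E k\A_m \otimes_E A_S$, apply $s'_{n-1}$ (whose coefficients are $\pm 1$ by Remark \ref{remark1}) to obtain $\psi^{(0)} := -s'_{n-1}(\epsilon^{(0)}) \in \llrr{\overline{\LL}_{n-1}^\prec(q)}_\ZZ$, and set $d_n^{(1)} := d_n^{(0)} + \psi^{(0)}$. Using the homotopy identity for $s'$ one sees that $\epsilon^{(1)} := d_{n-1}(d_n^{(1)})$ again lies in $\llrr{\overline{\LL}_{n-2}^\prec(q)}_\ZZ$ but with strictly smaller $\preceq$-tip than $\epsilon^{(0)}$.

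Iterate: put $\psi^{(k)} := -s'_{n-1}(\epsilon^{(k)})$, $d_n^{(k+1)} := d_n^{(k)} + \psi^{(k)}$, and $\epsilon^{(k+1)} := d_{n-1}(d_n^{(k+1)})$. Since $\preceq$ satisfies the descending chain condition, the sequence of $\preceq$-tips of $(\epsilon^{(k)})_{k \geq 0}$ must reach $\epsilon^{(N)} = 0$ after finitely many steps. Setting $d_n(1 \otimes q \otimes 1) := d_n^{(N)}$, the accumulated correction $\sum_{k < N} \psi^{(k)}$ lies in $\llrr{\overline{\LL}_{n-1}^\prec(q)}_\ZZ$, yielding (2), while $d_{n-1}(d_n(1 \otimes q \otimes 1)) = 0$ gives (1). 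The main obstacle is the precise bookkeeping of the $\preceq$-filtration through compositions with $\delta_n$, $s'_{n-1}$, and the perturbations $\phi_{n-1}$, and in particular controlling the discrepancy between $\delta_n$ and $\delta_n'$ (which agree under the basis identification only up to strictly $\prec$-smaller terms). These estimates drive the strict decrease of the $\preceq$-tip at each iteration and ensure termination.
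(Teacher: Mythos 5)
Your construction is essentially sound but follows a genuinely different route from the paper's. The paper (Proposition \ref{propfinal}) builds the differentials $d_{n+1}$ \emph{and} contracting homotopies $\rho_{n+1}$ simultaneously by induction on homological degree, defining
\[
d_{n+1}(1\otimes q\otimes 1)=\delta_{n+1}(1\otimes q\otimes 1)-\rho_n\bigl(d_n(\delta_{n+1}(1\otimes q\otimes 1))\bigr)
\]
in a single closed-form step; the well-founded recursion on $\preceq$ is hidden inside the construction of $\rho_n$, and the homotopies are then reused to prove exactness in Theorem \ref{teo1}. You dispense with the homotopies and instead correct $\delta_n(1\otimes q\otimes 1)$ iteratively by the transported Sk\"oldberg map $s_{n-1}=\pi_{n-1}\circ S_{n-1}\circ i_{n-2}$, terminating via the descending chain condition. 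Since Theorem \ref{teo2} asserts only the existence of the $d_i$ with properties (1) and (2), this is a legitimate economy; what it costs is the homotopy data $\rho_\bullet$ that the paper later exploits. Your decrease estimates are exactly the content of Lemmas \ref{lemma4}, \ref{lemma5}, \ref{lemma7} and Corollary \ref{coro:diff_y_ord}, so both arguments rest on the same technical core.

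Two points need repair. First, you justify $\epsilon^{(0)}=\phi_{n-1}(\delta_n(1\otimes q\otimes 1))$ by ``$\delta_{n-1}\circ\delta_n=0$'', which is false: the transported Bardzell maps $\delta_\bullet$ do \emph{not} form a complex over $A$ (that is precisely why the perturbation is needed); Corollary \ref{coro:diff_y_ord}(1) only yields $\delta_{n-1}\circ\delta_n(1\otimes q\otimes 1)\in\llrr{\overline\LL_{n-2}^\prec(q)}_\ZZ$. Your membership claim for $\epsilon^{(0)}$ survives, but the displayed equality must become a congruence modulo $\llrr{\overline\LL_{n-2}^\prec(q)}_\ZZ$. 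Second, $\preceq$ is only a partial order, so ``the $\preceq$-tip of $\epsilon^{(k)}$'' is not a well-defined single element; termination must be argued by noting that every element of the finite support bound of $\epsilon^{(k+1)}$ lies strictly below some element of that of $\epsilon^{(k)}$, and then invoking a K\"onig-type (multiset-extension) argument to exclude an infinite run. Relatedly, the strict decrease of $\epsilon^{(k+1)}$ uses Corollary \ref{coro:diff_y_ord}(2), which leaves the term $s_{n-2}\circ\delta_{n-2}(\epsilon^{(k)})$; killing it requires writing $\delta_{n-2}=d_{n-2}-(d_{n-2}-\delta_{n-2})$ and using $d_{n-2}\circ d_{n-1}=0$ from the inductive hypothesis together with Lemma \ref{lemma7}. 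This is part of the ``bookkeeping'' you defer, and it does go through, but it should be made explicit since it is where the induction hypothesis is actually consumed.
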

 We will carry out the proofs of these theorems in the following section.


\section{Proofs of the theorems}
\label{s:Proofs}

We keep the same notations and conditions of the previous section.
We start by proving some technical lemmas.

\begin{lemma}
\label{lemma1}
 Given $n\geq0$, the following equalities hold
 \begin{enumerate}
  \item $\delta_n\circ\pi_n = \pi_{n-1}\circ f_n$,
  \item $\delta_n'\circ\pi_n' = \pi_{n-1}'\circ f_n$.
 \end{enumerate}
\end{lemma}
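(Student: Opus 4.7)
The two statements are essentially identical in form, one for the projection $\pi$ associated with $I$ and one for the projection $\pi'$ associated with $\langle S\rangle$, so my plan is to give the argument for the first and then remark that exactly the same reasoning, with $\beta$ replaced by $\beta'$ and $I$ by $\langle S\rangle$, yields the second.

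Unravelling the definition, $\delta_n\circ\pi_n = \pi_{n-1}\circ f_n\circ i_n\circ\pi_n = \pi_{n-1}\circ f_n\circ\beta_n$, so the identity to prove is
\[
 \pi_{n-1}\circ f_n\circ(\beta_n-\id)=0.
\]
Writing a basic tensor as $a\otimes q\otimes c$ with $a,c\in kQ$ and $q\in\A_n$, and using that $\beta_n = i\otimes\id\otimes i\,\circ\,\pi\otimes\id\otimes\pi$, we get
\[
 \beta_n(a\otimes q\otimes c)-a\otimes q\otimes c
 =(\beta(a)-a)\otimes q\otimes\beta(c)+a\otimes q\otimes(\beta(c)-c).
\]
The first step of the plan is to observe that $\beta(x)-x\in I$ for every $x\in kQ$: indeed $\pi(\beta(x))=\pi(i(\pi(x)))=\pi(x)$ because $\pi\circ i=\id_A$ by construction of $i$ in Remark \ref{remark:beta}.

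Next, since $f_n$ is a $kQ$-bimodule map, applying it to $(\beta(a)-a)\otimes q\otimes\beta(c)$ yields an element of the form $(\beta(a)-a)\cdot f_n(1\otimes q\otimes 1)\cdot\beta(c)$, which is a sum of terms $(\beta(a)-a)a_j\otimes p_j\otimes c_j\beta(c)$ with $p_j\in\A_{n-1}$. Applying $\pi_{n-1}=\pi\otimes\id\otimes\pi$ to each such term produces $\pi\bigl((\beta(a)-a)a_j\bigr)\otimes p_j\otimes\pi(c_j\beta(c))=0$ because $\beta(a)-a\in I=\ker\pi$. The same argument disposes of the second summand $a\otimes q\otimes(\beta(c)-c)$. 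This proves (1).

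For (2), I would repeat this verbatim, using that $\pi'\circ i'=\id_{A_S}$ and hence $\beta'(x)-x\in\langle S\rangle=\ker\pi'$, so that $\pi'_{n-1}\circ f_n\circ(\beta'_n-\id)=0$ and consequently $\delta'_n\circ\pi'_n=\pi'_{n-1}\circ f_n$. There is essentially no obstacle here: the only thing to check carefully is that one is genuinely allowed to pull scalars from $kQ$ past $f_n$, which is precisely the $kQ$-bilinearity of $f_n$, and that $\ker\pi_{n-1}$ contains $I\otimes_E k\A_{n-1}\otimes_E kQ+kQ\otimes_E k\A_{n-1}\otimes_E I$, which is immediate from the formula for $\pi_{n-1}$.
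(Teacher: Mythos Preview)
Your proof is correct and is precisely the ``straightforward after the definitions'' argument that the paper gestures at without writing out. You have simply made explicit the two ingredients the paper leaves implicit: that $f_n$ is a $kQ$-bimodule map, so scalars can be pulled past it, and that $\beta(x)-x\in I=\ker\pi$ (respectively $\beta'(x)-x\in\langle S\rangle=\ker\pi'$), which forces $\pi_{n-1}\circ f_n$ to kill the difference $\beta_n-\id$.
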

The proof is straightforward after the definitions.

\medskip


Next we prove three lemmas where we study how various maps defined in Section \ref{s:Resolution} behave with respect to the order. 

\begin{lemma}
\label{lemma2}
 For all $n\in\NN_0$ and $\mu q\in k^\times Q_{\geq0}$, the  images by 
 $\pi_n$ of $\LL_n^\preceq(\mu q)$ and of 
 $\LL_n^\prec(\mu q)$ are respectively  contained in 
 $\llrr{\overline{\LL}_n^\preceq(\mu q)}_\ZZ$ and in 
 $\llrr{\overline{\LL}_n^\prec(\mu q)}_\ZZ$.
\end{lemma}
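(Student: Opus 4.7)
First I would reduce to checking the inclusion on generators. Take $\lambda a\otimes p\otimes c\in\LL_n^\preceq(\mu q)$, so $\lambda apc\preceq\mu q$. Using $\pi=\pi\circ\beta$ and writing $\beta(a)=\sum_i\alpha_i b_i$, $\beta(c)=\sum_j\gamma_j b'_j$ with $b_i,b'_j\in\B$ and $\alpha_i,\gamma_j\in k^\times$, one computes
\[
 \pi_n(\lambda a\otimes p\otimes c)=\sum_{i,j}\lambda\alpha_i\gamma_j\,\pi(b_i)\otimes p\otimes\pi(b'_j).
\]
Grouping by the distinct paths $w=b_ipb'_j$ rewrites this expression as a sum with integer coefficients $+1$ of elements $\lambda'_w\,\pi(b_w)\otimes p\otimes\pi(b'_w)$, where $\lambda'_w$ denotes the total coefficient of $w$.

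The heart of the argument is to exhibit a single reduction $\tilde r$ of $kQ$ with $\tilde r(apc)=\beta(a)\cdot p\cdot\beta(c)$, obtained by ``lifting'' reductions of $a$ and of $c$ so that they act only on the outer factors. Concretely, if $r_a=(t_m,\dots,t_1)$ takes $a$ to $\beta(a)$ with $t_k=r_{x_k,s_k,y_k}$, I would set $t'_k\colonequals r_{x_k,s_k,y_kpc}$; each $t'_k$ acts on a path of the form $\omega\cdot pc$ exactly as $t_k$ acts on $\omega$ and fixes every other path, so an easy induction gives $(t'_m,\dots,t'_1)(apc)=\beta(a)\cdot pc$. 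Lifting analogously a reduction for $c$ on top produces $\tilde r$ with $\tilde r(apc)=\beta(a)\cdot p\cdot\beta(c)$. This construction is the only non-routine step; the rest is bookkeeping.

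Finally, writing $\tilde r(\lambda apc)=\sum_w\lambda'_w w$, the definition of $\preceq$ gives $\lambda'_w w\preceq\lambda apc$ for every $w$ in the support, and transitivity with the hypothesis $\lambda apc\preceq\mu q$ yields $\lambda'_w w\preceq\mu q$. Hence each $\lambda'_w\pi(b_w)\otimes p\otimes\pi(b'_w)$ belongs to $\overline{\LL}_n^\preceq(\mu q)$, proving the first inclusion. For the strict version, if $\lambda apc\prec\mu q$ and $\lambda'_w w=\mu q$ for some $w$, the chain $\mu q\preceq\lambda apc\preceq\mu q$ combined with antisymmetry of $\preceq$ (established in the previous lemma) would force $\lambda apc=\mu q$, contradicting strictness; so $\lambda'_w w\prec\mu q$ and the same packaging gives the second inclusion. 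The main obstacle I anticipate is precisely the lifted-reduction construction of $\tilde r$, which I sketched above.
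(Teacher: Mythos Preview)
Your overall strategy is sound, but the ``grouping by $w=b_ipb'_j$'' step has a genuine gap. You tacitly assume that distinct pairs $(b_i,b'_j)$ produce distinct concatenations $w$, so that the tensor sum $\sum_{i,j}\lambda\alpha_i\gamma_j\,\pi(b_i)\otimes p\otimes\pi(b'_j)$ can be reindexed by the paths $w$. This need not hold: one can have $b_ipb'_j=b_{i'}pb'_{j'}$ as paths while $(b_i,b'_j)\neq(b_{i'},b'_{j'})$ (take for instance $p=x^2$ with $x$ a loop and compare $(x,e)$ with $(e,x)$). In that case the tensor elements $\pi(b_i)\otimes p\otimes\pi(b'_j)$ and $\pi(b_{i'})\otimes p\otimes\pi(b'_{j'})$ remain distinct, and your reduction $\tilde r$ only certifies $\lambda'_w w\preceq\lambda apc$ for the \emph{combined} coefficient $\lambda'_w$, not the termwise inequality $\lambda\alpha_i\gamma_j\,b_ipb'_j\preceq\mu q$ that membership in $\overline{\LL}_n^\preceq(\mu q)$ actually demands.

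The paper's argument avoids this and is considerably shorter. It simply invokes the corollary $\beta(a)\preceq a$, $\beta(c)\preceq c$---which by definition means $\alpha_ib_i\preceq a$ and $\gamma_jb'_j\preceq c$ for every $i,j$ separately---and then uses that $\preceq$ is compatible with concatenation (already established) to conclude $\lambda\alpha_i\gamma_j\,b_ipb'_j\preceq\lambda apc\preceq\mu q$ for each pair $(i,j)$. No explicit $\tilde r$ is needed; your lifted-reduction construction is in effect reproving that compatibility. If you want to salvage your approach, apply the lift in two stages: first reduce only the $a$-side to get $\alpha_i\,b_ipc\preceq apc$ (here the paths $b_ipc$ \emph{are} pairwise distinct, since the $b_i$ are and $pc$ is fixed), then with $b_i$ fixed reduce the $c$-side to get $\gamma_j\,b_ipb'_j\preceq b_ipc$. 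Chaining gives the termwise inequality and closes the gap---but at that point you have exactly recovered the paper's one-paragraph proof.
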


\begin{proof}
 Given $n\in\NN_0$, $\mu q\in k^\times Q_{\geq0}$ and 
 $x=\lambda a\otimes p\otimes c\in\LL_n^\preceq(\mu q)$, where 
 $a,c\in Q_{\geq0}$ and $p\in\A_n$, suppose $\beta(a)=\sum_i\lambda_ib_i$ and
 $\beta(c)=\sum_j\lambda_j'b_j'$. Since $\beta(a)\preceq a$ and 
 $\beta(c)\preceq c$, then $\lambda_ib_i\preceq a$ and 
 $\lambda_j'b_j'\preceq c$ for all $i,j$. This implies
 \[
  \lambda\lambda_i\lambda_jb_ipb_j'\preceq \lambda apc\preceq \mu q
 \]
 and so $\lambda\lambda_i\lambda_j'\pi(b_i)\otimes p\otimes\pi(b_j')$ belong
 to $\overline{\LL}_n^\preceq(\mu q)$ for all $i,j$. The result follows from the equalities 
 \[\pi_n(x)=\lambda\pi(a)\otimes p\otimes \pi(c) = 
 \lambda\pi(\beta(a))\otimes p\otimes\pi(\beta(c))=
 \sum_{i,j}\lambda\lambda_i\lambda_j'\pi(b_i)\otimes p\otimes\pi(b_j').
 \]

 The proof of the second part is analogous.
 \end{proof}

 \begin{corollary}
 \label{coro1}
 Let $n\geq-1$ and $\mu q\in k^\times Q_{\geq0}$. Keeping the same notations of
 the proof of the previous lemma, we conclude that
 \begin{enumerate}[i)]
  \item if $x\in\overline{\LL}_n^\preceq(\mu q)$, then $\lambda\pi(a)x\pi(c)\in
           \llrr{\overline{\LL}_n^\preceq(\lambda\mu aqc)}_\ZZ$,
  \item if $x\in\overline{\LL}_n^\prec(\mu q)$, then $\lambda\pi(a)x\pi(c)\in
           \llrr{\overline{\LL}_n^\prec(\lambda\mu aqc)}_\ZZ$.
 \end{enumerate}
\end{corollary}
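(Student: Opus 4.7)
The plan is to reduce both parts to the behavior of the normal form map $\beta$ together with the compatibility of $\preceq$ with concatenation, both of which are available from the results proved just before the start of Section \ref{s:Proj_modules}. Assume without loss of generality that $aqc \neq 0$ in $kQ$, since otherwise the conclusion is trivial. Write $x = \lambda_0\,\pi(b) \otimes p \otimes \pi(b')$ with $b,b'\in\B$, $p\in\A_n$, and $\lambda_0\,bpb' \preceq \mu q$.

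For part (i), expand
\[
\lambda\,\pi(a)\,x\,\pi(c) \;=\; \lambda\lambda_0\,\pi(ab)\otimes p\otimes \pi(b'c) \;=\; \lambda\lambda_0\,\pi(\beta(ab))\otimes p\otimes \pi(\beta(b'c)),
\]
and write $\beta(ab)=\sum_i \mu_i e_i$ and $\beta(b'c) = \sum_j \mu'_j f_j$ with $e_i,f_j\in\B$ and $\mu_i,\mu'_j\in k^\times$. Then
\[
\lambda\,\pi(a)\,x\,\pi(c) \;=\; \sum_{i,j} \lambda\lambda_0\mu_i\mu'_j\,\pi(e_i)\otimes p\otimes \pi(f_j).
\]
By the corollary just before Section \ref{s:Proj_modules}, $\mu_i e_i \preceq ab$ and $\mu'_j f_j \preceq b'c$ for all $i,j$. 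Applying compatibility of $\preceq$ with concatenation, combined with the hypothesis $\lambda_0\,bpb' \preceq \mu q$, yields
\[
\lambda\lambda_0\mu_i\mu'_j\,e_i p f_j \;\preceq\; \lambda\lambda_0\,(ab)p(b'c) \;=\; \lambda\,a(\lambda_0\,bpb')c \;\preceq\; \lambda\mu\,aqc.
\]
Hence each summand belongs to $\overline{\LL}_n^\preceq(\lambda\mu\,aqc)$ and the whole sum lies in $\llrr{\overline{\LL}_n^\preceq(\lambda\mu\,aqc)}_\ZZ$ (coefficients $+1$).

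Part (ii) follows by the same argument, the only change being that the middle $\preceq$ becomes a strict $\prec$. The subtle point is to check that strictness is preserved through the concatenation step: if $\lambda\lambda_0\,abpb'c = \lambda\mu\,aqc$ held as elements of $k^\times Q_{\geq 0}$, then cancellation of the left factor $a$ and the right factor $c$ in the path algebra would give $\lambda_0\,bpb' = \mu q$, contradicting strictness. This is the only real step beyond bookkeeping; the rest of the proof is identical to part (i), yielding that each summand lies in $\overline{\LL}_n^\prec(\lambda\mu\,aqc)$, and hence the sum lies in $\llrr{\overline{\LL}_n^\prec(\lambda\mu\,aqc)}_\ZZ$.
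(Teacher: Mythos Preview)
Your proof is correct and follows the same idea the paper intends: the paper states this as an immediate corollary of Lemma~\ref{lemma2} with no separate proof, and your argument is exactly the natural unpacking---lift $x$ to $y=\lambda\lambda_0\,ab\otimes p\otimes b'c\in\LL_n^{\preceq}(\lambda\mu aqc)$ (using compatibility of $\preceq$ with concatenation) and then run the computation from the proof of Lemma~\ref{lemma2} on $\pi_n(y)$. Your explicit check that strictness survives concatenation via cancellation in the path algebra is the only extra ingredient needed for part~(ii), and it is correctly handled.
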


\medskip

\begin{lemma}
\label{lemma4}
 Given $n\in\NN_0$ and $\mu q\in k^\times Q_{\geq0}$, there are inclusions
\begin{multicols}{2}
 \begin{enumerate}[i)]
  \item $\delta_n(\overline{\LL}_n^\preceq(\mu q))\subseteq\llrr{\overline{\LL}_{n-1}^\preceq(\mu q)}_\ZZ$,
  \item $\delta_n(\overline{\LL}_n^\prec(\mu q))\subseteq\llrr{\overline{\LL}_{n-1}^\prec(\mu q)}_\ZZ$,
  \item $s_n(\overline{\LL}_{n-1}^\preceq(\mu q))\subseteq\llrr{\overline{\LL}_{n}^\preceq(\mu q)}_\ZZ$,
  \item $s_n(\overline{\LL}_{n-1}^\prec(\mu q))\subseteq\llrr{\overline{\LL}_{n}^\prec(\mu q)}_\ZZ$.
 \end{enumerate}
\end{multicols}
\end{lemma}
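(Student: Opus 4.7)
The plan is to reduce all four inclusions to a common template: use the $A$-bimodule linearity of $\delta_n$ and $s_n$ to strip off the outer scalars, then translate the already-established Remark \ref{remark1} (which controls $f_n$ and $S_n$ in the $\LL$-world) to the $\overline\LL$-world by applying Lemma \ref{lemma2} and Corollary \ref{coro1} to reinsert the surrounding factors.

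Concretely, take a generator $x=\lambda\pi(b)\otimes p\otimes\pi(b')$ of $\overline\LL_n^\preceq(\mu q)$, so that $b,b'\in\B$, $p\in\A_n$, and $\lambda b p b'\preceq \mu q$. Since $\delta_n=\pi_{n-1}\circ f_n\circ i_n$ is a bimodule map,
\[
\delta_n(x)=\lambda\,\pi(b)\cdot\pi_{n-1}\bigl(f_n(1\otimes p\otimes 1)\bigr)\cdot\pi(b').
\]
Now $1\otimes p\otimes 1\in\LL_n^\preceq(p)$, so Remark \ref{remark1} gives $f_n(1\otimes p\otimes 1)\in\llrr{\LL_{n-1}^\preceq(p)}_\ZZ$, and Lemma \ref{lemma2} then yields $\pi_{n-1}(f_n(1\otimes p\otimes 1))\in\llrr{\overline\LL_{n-1}^\preceq(p)}_\ZZ$. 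Applying Corollary \ref{coro1}(i) with the multipliers $\lambda\pi(b)$ on the left and $\pi(b')$ on the right places $\delta_n(x)$ in $\llrr{\overline\LL_{n-1}^\preceq(\lambda b p b')}_\ZZ$, which is contained in $\llrr{\overline\LL_{n-1}^\preceq(\mu q)}_\ZZ$ by the transitivity of $\preceq$. This gives (i), and (iii) follows identically with $S_n$ in place of $f_n$ and $\pi_n$ in place of $\pi_{n-1}$, using the second assertion of Remark \ref{remark1}.

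For the strict statements (ii) and (iv), the same chain of arguments applies, but at the last step we need to upgrade $\preceq$ to $\prec$: given $\lambda b p b'\prec \mu q$, any term in the expansion is bounded by $\lambda b p b'$ in the order $\preceq$ and hence by $\mu q$ in the order $\preceq$; it cannot equal $\mu q$ because then $\mu q\preceq \lambda b p b'\prec \mu q$ would contradict the antisymmetry of $\preceq$ established in the previous section. So each such term lies in $\overline\LL_{n-1}^\prec(\mu q)$ (respectively $\overline\LL_n^\prec(\mu q)$), completing the argument. The only real bookkeeping point, and hence the main obstacle, is this transition from the non-strict to the strict bound; everything else is a direct chaining of Remark \ref{remark1}, Lemma \ref{lemma2}, and Corollary \ref{coro1}.
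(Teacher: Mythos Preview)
Your argument for (i) and (ii) is correct, but it takes an unnecessary detour. The paper's proof is more direct: since $x=\lambda\pi(b)\otimes p\otimes\pi(b')\in\overline\LL_n^\preceq(\mu q)$ means precisely that $\lambda bpb'\preceq\mu q$, the lift $i_n(x)=\lambda b\otimes p\otimes b'$ already lies in $\LL_n^\preceq(\mu q)$. Then Remark~\ref{remark1} gives $f_n(i_n(x))\in\llrr{\LL_{n-1}^\preceq(\mu q)}_\ZZ$ and Lemma~\ref{lemma2} finishes. There is no need to pass through the intermediate level $\preceq p$, invoke bimodule linearity of $\delta_n$, or appeal to Corollary~\ref{coro1}.

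More importantly, your treatment of (iii) and (iv) has a genuine gap: $s_n$ is \emph{not} an $A$-bimodule map. The map $S_n$ is only $kQ$--$E$-linear (look at its defining formula: it depends nontrivially on the right factor $b$ of $1\otimes q\otimes b$), so $s_n$ is only an $A$--$E$-bimodule map. You therefore cannot write $s_n(x)=\lambda\pi(b)\cdot s_n(1\otimes p\otimes 1)\cdot\pi(b')$, and the argument does not go through ``identically''. You could salvage your approach by stripping only the left factor and working with $S_n(1\otimes p\otimes b')\in\llrr{\LL_n^\preceq(pb')}_\ZZ$, but the cleaner fix is simply to adopt the paper's route above, which never uses any linearity of $\delta_n$ or $s_n$ and hence treats all four cases uniformly.
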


\begin{proof}
From $x=\lambda\pi(b)\otimes p\otimes\pi(b')\in\overline{\LL}_n^\preceq(\mu q)$, with
$b,b'\in\B$ and $p\in\A_n$, we get $i_n(x) = \lambda b\otimes p\otimes b'$. This element belongs to $\LL_n^\preceq(\mu q)$ and this implies that $f_n(\lambda b\otimes p\otimes b')$ 
belongs to $\llrr{\LL_{n-1}^\preceq(\mu q)}_\ZZ$, by Remark \ref{remark1}. As a consequence of Lemma \ref{lemma2} 
we obtain that $\delta_n(x) = \pi_{n-1}(f_n(\lambda b\otimes p\otimes b'))$ 
belongs to $\llrr{\overline{\LL}_{n-1}^\preceq(\mu q)}_\ZZ$. The proofs of the other statements are similar.
\end{proof}

\begin{lemma}
\label{lemma5}
 Given $n\geq -1$ and $\mu q\in k^\times Q_{\geq0}$, if 
 $x=\lambda a\otimes p\otimes c\in\LL_n^\preceq(\mu q)$ is such that $\pi_n'(x)=0$, then
 \[
  \pi_n(x)\in\llrr{\overline{\LL}_n^\prec(\mu q)}_\ZZ.
 \]
\end{lemma}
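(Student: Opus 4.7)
The plan is to translate the vanishing $\pi_n'(x)=0$ into a statement about whether the paths $a$ and $c$ are irreducible, and then to compare the factorization $\pi_n(x)=\lambda\pi(\beta(a))\otimes p\otimes\pi(\beta(c))$ against $\mu q$ using the compatibility of $\preceq$ with concatenation. First I observe that $\{\pi'(b)\otimes p'\otimes\pi'(b') : b,b'\in\B,\ p'\in\A_n,\ bp'b'\neq 0\}$ is a $k$-basis of $A_S\otimes_E k\A_n\otimes_E A_S$, so the element $\pi_n'(x)=\lambda\pi'(a)\otimes p\otimes\pi'(c)$ vanishes only when $\pi'(a)=0$ or $\pi'(c)=0$; for a path, $\pi'(a)=0$ is equivalent to $a\notin\B$. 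We may thus assume without loss of generality that $a\notin\B$.

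Next I compute $\pi_n(x)$. Writing $\beta(a)=\sum_i\lambda_ib_i$ and $\beta(c)=\sum_j\lambda_j'b_j'$ with $b_i,b_j'\in\B$ and $\lambda_i,\lambda_j'\in k^\times$, we obtain
\[
 \pi_n(x)=\sum_{i,j}\lambda\lambda_i\lambda_j'\,\pi(b_i)\otimes p\otimes\pi(b_j'),
\]
so it suffices to prove that $\lambda\lambda_i\lambda_j'\,b_ipb_j'\prec\mu q$ for each $i,j$. From the Corollary preceding this lemma we have $\lambda_ib_i\preceq a$ and $\lambda_j'b_j'\preceq c$. Since $b_i\in\B$ while $a\notin\B$, we have $b_i\neq a$, so $\lambda_ib_i\neq 1\cdot a$ in $k^\times Q_{\geq 0}$ and therefore $\lambda_ib_i\prec a$ strictly. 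Applying compatibility of $\preceq$ with concatenation by paths and with scaling then yields the chain
\[
 \lambda\lambda_i\lambda_j'\,b_ipb_j'\;\preceq\;\lambda\lambda_i\,b_ipc\;\prec\;\lambda\,apc\;\preceq\;\mu q,
\]
and antisymmetry of $\preceq$ upgrades this to $\lambda\lambda_i\lambda_j'\,b_ipb_j'\prec\mu q$. The case $c\notin\B$ is symmetric, and summing the resulting terms places $\pi_n(x)$ in $\llrr{\overline{\LL}_n^\prec(\mu q)}_\ZZ$.

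The main obstacle will be justifying the strict step $\lambda\lambda_i\,b_ipc\prec\lambda\,apc$: I will need to argue carefully that equality in $k^\times Q_{\geq 0}$ would force $\lambda_i=1$ and $b_ipc=apc$, which after cancelling the common path $pc$ would contradict $b_i\in\B$, $a\notin\B$. The bookkeeping with scalars in $k^\times$ together with the concatenation-compatibility of $\preceq$ applied on both the left ($a$-side) and the right ($c$-side) is the only delicate ingredient; once verified, the argument is complete.
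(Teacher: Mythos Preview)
Your proof is correct and follows essentially the same route as the paper: deduce from $\pi_n'(x)=0$ that $a\notin\B$ or $c\notin\B$, expand $\pi_n(x)$ via normal forms $\beta(a)$ and $\beta(c)$, and then use compatibility of $\preceq$ with concatenation (and scaling) to push the strict inequality $\lambda_ib_i\prec a$ through to $\lambda\lambda_i\lambda_j'\,b_ipb_j'\prec\mu q$. The paper is somewhat terser about the strict step, simply asserting $\lambda\lambda_i\lambda_j' b_ipb_j'\prec\mu q$; your explicit cancellation argument (that $b_ipc=apc$ forces $b_i=a$ in the free path category) and your use of antisymmetry to propagate strictness along the chain are the details the paper leaves implicit.
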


\begin{proof}
 By hypothesis we get that $0=\pi_n'(x)=\pi'(a)\otimes p\otimes\pi'(c)$. The only 
 possibilities are $\pi'(a)=0$ or $\pi'(c)=0$, this is, $a\notin\B$ or 
 $c\notin\B$, namely $\beta(a)\prec a$ or $\beta(c)\prec c$.
 
 Writing $\beta(a)=\sum_i\lambda_ib_i$ and 
$\beta(c)=\sum_j\lambda_j'b_j'$, 
we deduce that $\lambda\lambda_i\lambda_j'b_ipb_j\prec \mu q$ for
 all $i,j$. As a consequence, 
 $\sum_{i,j}\lambda\lambda_i\lambda_j'\pi(b_i)\otimes p\otimes\pi(b_j')\in
 \llrr{\overline{\LL}_n^\prec(\mu q)}_\ZZ$.
 
 The proof ends by computing
 \[
  \pi_n(x)=\pi_n(\beta(x)) = 
  \pi_n(\sum_{i,j}\lambda\lambda_i\lambda_j'b_i\otimes p\otimes b_j')=
  \sum_{i,j}\lambda\lambda_i\lambda_j'\pi(b_i)\otimes p\otimes \pi(b_j').
 \]
\end{proof}

The importance of the preceding lemmas is that they guarantee
how differentials and morphisms used for the homotopy behave with respect to
the order. This is stated explicitly in the following corollary.

\begin{corollary}
\label{coro:diff_y_ord}
 Given $n \ge 1$, $\mu q\in k^\times Q_{\geq0}$ and $x\in\overline{\LL}_n^\preceq(\mu q)$,
 the following facts hold:
 \begin{enumerate}
  \item $\delta_{n-1}\circ\delta_n(x)\in\llrr{\overline{\LL}_{n-2}^\prec(\mu q)}_\ZZ$,
  \label{coro:diff_y_ord1}
  \item $x-\delta_{n+1}\circ s_{n+1}(x)-s_n\circ\delta_n(x)\in\llrr{\overline{\LL}_n^\prec(\mu q)}_\ZZ$.
  \label{coro:diff_y_ord2}
  \end{enumerate}
\end{corollary}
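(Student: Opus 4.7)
The plan is to prove both parts by the same device: transport the monomial identities $\delta'_{n-1}\circ\delta'_n=0$ and $s'_n\circ\delta'_n+\delta'_{n+1}\circ s'_{n+1}=\id$ to the non-monomial setting, and control the error using the comparison between the maps $\beta_k=i_k\circ\pi_k$ and $\beta'_k=i'_k\circ\pi'_k$. The core observation is that for every $z=\lambda a\otimes p\otimes c$ in $\LL_k^\preceq(\mu q)$ one has $\beta_k(z)-\beta'_k(z)\in\llrr{\LL_k^\prec(\mu q)}_\ZZ$: when $a,c\in\B$ the two maps agree, and otherwise $\beta'_k(z)=0$ while each summand of $\beta_k(z)=\lambda\beta(a)\otimes p\otimes\beta(c)$ is strictly smaller by the final corollary of Section~2 (which gives $\beta(r)\prec r$ for $r\notin\B$) together with the compatibility of $\prec$ with concatenation.

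I will also use the following refinement of Lemma~\ref{lemma5} for sums: if $y\in\llrr{\LL_k^\preceq(\mu q)}_\ZZ$ satisfies $\pi'_k(y)=0$, then $\pi_k(y)\in\llrr{\overline\LL_k^\prec(\mu q)}_\ZZ$. Summands $\lambda_i a_i\otimes p_i\otimes c_i$ with $a_i\notin\B$ or $c_i\notin\B$ are placed in $\llrr{\overline\LL_k^\prec(\mu q)}_\ZZ$ directly by Lemma~\ref{lemma5}. The remaining summands (both factors in $\B$) form a subsum that already vanishes in $A_S\otimes_E k\A_k\otimes_E A_S$, because the discarded ones contributed $0$ there; since $\{[a]\otimes p\otimes[c]:a,c\in\B,\ p\in\A_k,\ apc\neq0\}$ is simultaneously a $k$-basis of $A_S\otimes_E k\A_k\otimes_E A_S$ and of $A\otimes_E k\A_k\otimes_E A$ (by Lemma~\ref{lemma:bases} applied to both algebras), the same cancellation occurs under $\pi_k$.

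For part (i), write $\delta_{n-1}\circ\delta_n(x)=\pi_{n-2}\circ f_{n-1}\circ\beta_{n-1}\circ f_n\circ i_n(x)$ and split $\beta_{n-1}=\beta'_{n-1}+(\beta_{n-1}-\beta'_{n-1})$. The contribution of the first summand has $\pi'_{n-2}$-image equal to $\delta'_{n-1}(\delta'_n(x'))=0$, where $x'$ denotes the image of $x$ in $A_S\otimes_E k\A_n\otimes_E A_S$; the refined form of Lemma~\ref{lemma5} then places it in $\llrr{\overline\LL_{n-2}^\prec(\mu q)}_\ZZ$. The contribution of the second summand already lies in $\llrr{\LL_{n-1}^\prec(\mu q)}_\ZZ$ by the key observation, and Remark~\ref{remark1} together with Lemma~\ref{lemma2} propagate the strict inequality through $f_{n-1}$ and $\pi_{n-2}$.

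For part (ii), an entirely parallel computation yields $s_n(\delta_n(x))+\delta_{n+1}(s_{n+1}(x))=\pi_n(Z)$ with
\[
Z=S_n\circ\beta_{n-1}\circ f_n\circ i_n(x)+f_{n+1}\circ\beta_{n+1}\circ S_{n+1}\circ i_n(x).
\]
Replacing each $\beta$ by $\beta'$ produces an error in $\llrr{\LL_n^\prec(\mu q)}_\ZZ$, by the key observation combined with Remark~\ref{remark1} and Lemma~\ref{lemma2}. The residual term $Y$ satisfies $\pi'_n(Y)=\pi'_n(i_n(x))$ by Sk\"oldberg's homotopy identity applied at $x'$; hence $\pi'_n(Y-i_n(x))=0$, and the refined Lemma~\ref{lemma5} gives $\pi_n(Y)-x\in\llrr{\overline\LL_n^\prec(\mu q)}_\ZZ$, since $\pi_n(i_n(x))=x$. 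The main technical point that needs care is the refinement of Lemma~\ref{lemma5}: the passage of the ``in-$\B$'' cancellations from $A_S\otimes_E k\A_k\otimes_E A_S$ to $A\otimes_E k\A_k\otimes_E A$ rests on the simultaneous basis property of $\B$, and must be stated explicitly to avoid confusion between the two quotients.
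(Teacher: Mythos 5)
Your argument is correct and follows essentially the same route as the paper's: reduce to the monomial identities $\delta_{n-1}'\circ\delta_n'=0$ and Sk\"oldberg's homotopy identity, and control the error by comparing $i_k\circ\pi_k$ with $i_k'\circ\pi_k'$ and propagating strict inequalities through Remark \ref{remark1}, Lemma \ref{lemma2} and Lemma \ref{lemma5}. The one substantive deviation is that you explicitly state and justify the extension of Lemma \ref{lemma5} to $\ZZ$-linear combinations --- splitting off the summands with a tensor factor outside $\B$ and using that $\{\pi(b)\otimes p\otimes\pi(b')\}$ and $\{\pi'(b)\otimes p\otimes\pi'(b')\}$ are bases indexed by the same triples, so cancellations transfer between the two quotients --- a point the paper uses silently; conversely, for part (i) the paper shortens the computation by invoking Lemma \ref{lemma1} to write $\delta_{n-1}\circ\delta_n(x)=\pi_{n-2}\circ f_{n-1}\circ f_n(i_n(x))$ with no $\beta_{n-1}$ inserted in the middle, whereas you handle that $\beta_{n-1}$ by the same $\beta=\beta'+(\beta-\beta')$ splitting the paper reserves for part (ii); both variants are sound.
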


\begin{proof}
 Let us first write $x=\lambda\pi(b)\otimes p\otimes\pi(b')$
 with $b,b'\in\B$ and $x'\colonequals i_n(x) = \lambda b\otimes p\otimes b'$.
 Lemma \ref{lemma1} implies that
 \[
  \delta_{n-1}\circ\delta_n(x) = \delta_{n-1}\circ\delta_n\circ\pi_n(x')
    =\delta_{n-1}\circ\pi_{n-1}\circ f_n(x') 
    = \pi_{n-2}\circ f_{n-1}\circ f_n(x').
 \]
 By Remark \ref{remark1}, $f_{n-1}\circ f_n(x')\in\LL_{n-2}^\preceq(\mu q)$. Next, by 
 Lemma \ref{lemma5}, in order to prove that $\delta_{n-1}\circ\delta_n(x)\in
 \llrr{\overline\LL_{n-2}^\prec(\mu q)}_\ZZ$, it suffices to verify that 
 $\pi_{n-2}'\circ f_{n-1}\circ f_n(x')=0$, which is in fact true using Lemma 
 \ref{lemma1}, and the fact that 
 $(A_S\otimes_Ek\A_\bullet\otimes_EA_S,\delta_\bullet')$ is exact.
 
 In order to prove (\ref{coro:diff_y_ord2}), we first remark that if 
 $k\in\NN_0$ and $y\in\llrr{\LL_k^\preceq(\mu q)}_\ZZ$, then $i_k'\circ\pi_k'(y)-i_k\circ\pi_k(y)
 \in\llrr{\LL_k^\prec(\mu q)}_\ZZ$. Indeed, let us write $y=\lambda a\otimes p\otimes c\in
 \LL_k^\preceq(\mu q)$. In case $a\in\B$ and $c\in\B$, there are equalities $i_k'\circ\pi_k'(y)=
 y=i_k\circ\pi_k(y)$, and so the difference is zero. If either $a\notin\B$ or
 $c\notin\B$, then $\pi_k'(y)=0$ and in this case Lemma \ref{lemma5} implies
 that $\pi_k(y)\in\llrr{\overline\LL_k^\prec(\mu q)}_\ZZ$. So, 
 $i_k\circ\pi_k(y)\in\llrr{\LL_k^\prec(\mu q)}_\ZZ$ and the difference we are 
 considering  belongs to $\llrr{\LL_k^\prec(\mu q)}_\ZZ$.
 
 Fix now $x=\lambda\pi(b)\otimes p\otimes\pi(b')$ and 
 $x'=i_n(x)=\lambda b\otimes p\otimes b'$, with $b,b'\in\B$.
 
 Since $x'=i_n'\circ\pi_n'(x')$,
 \[
 \begin{aligned}
  x-\delta_{n+1}\circ s_{n+1}(x)-s_n\circ\delta_n(x)
     =\pi_n(x')&-\pi_n(f_{n+1}\circ i_{n+1}\circ\pi_{n+1}\circ S_{n+1}(x'))\\
     &-\pi_n(S_n\circ i_{n-1}\circ\pi_{n-1}\circ f_n(x')).
 \end{aligned}
 \]
 The previous comments and Remark \ref{remark1} allow us to write that
 \[
  \begin{aligned}
   &\pi_n\circ f_{n+1}\circ(i_{n+1}'\circ\pi_{n+1}'
            -i_{n+1}\circ\pi_{n+1})\circ S_{n+1}(x')
            \in\langle\overline{\LL}_n^\prec(\mu q)\rangle_\ZZ,\\
   &\pi_n\circ S_n\circ(i_{n-1}'\circ\pi_{n-1}' 
            - i_{n-1}\circ\pi_{n-1})\circ f_n(x')
            \in\langle\overline{\LL}_n^\prec(\mu q)\rangle_\ZZ.
  \end{aligned}
 \]
 It is then enough to prove that
 \[
  \pi_n(x'-f_{n+1}\circ i_{n+1}'\circ\pi_{n+1}'\circ S_{n+1}(x')
  - S_n\circ i_{n-1}'\circ\pi_{n-1}'\circ f_n(x'))\in
  \langle\overline{\LL}_n^\prec(\mu q)\rangle_\ZZ,
 \]
 but
 \[
  \begin{aligned}
   &\pi_n'(x'-f_{n+1}\circ i_{n+1}'\circ\pi_{n+1}'\circ S_{n+1}(x')
      - S_n\circ i_{n-1}'\circ\pi_{n-1}'\circ f_n(x'))\\
  &=\pi_n'(x')-\delta_{n+1}'\circ s_{n+1}'(\pi_n'(x')) 
      - s_n'\circ\delta_n'(\pi_n'(x'))\\
  &=0.
  \end{aligned}
 \]
 Finally, we deduce from Lemma \ref{lemma5} that
 \[
  \pi_n(x'-f_{n+1}\circ i_{n+1}'\circ\pi_{n+1}'\circ S_{n+1}(x')
     -S_n\circ i_{n-1}'\circ\pi_{n-1}'\circ f_n(x'))
     \in\langle\overline{\LL}_n^\prec(\mu q)\rangle_\ZZ.
 \]
\end{proof}

Next we prove another technical lemma that shows how to control the 
differentials.

\begin{lemma}
\label{lemma7}
 Fix $n\in\NN_0$, let $R$ be either $k$ or $\ZZ$.
\begin{enumerate}
 \item If $d:A\otimes_Ek\A_n\otimes_EA\to A\otimes_Ek\A_{n-1}\otimes_EA$ is a
 morphism of $A$-bimodules such that $(d-\delta_n)(1\otimes p\otimes 1)\in
 \langle\overline\LL_{n-1}^\prec(p)\rangle_R$ for all $p\in\A_n$, then given 
 $x\in\llrr{\overline\LL_n^\preceq(\mu q)}_R$, $(d-\delta_n)(x)
 \in\langle\overline{\LL}_{n-1}^\prec(\mu q)\rangle_R$ for all 
 $\mu q\in k^\times Q_{\geq0}$.
 
 \item If $\rho: A\otimes_Ek\A_n\otimes_EA\to A\otimes_Ek\A_{n+1}\otimes_EA$ 
 is a morphism of $A-E$-bimodules such that 
 $(\rho-s_n)(1\otimes p\otimes\pi(b))\in\llrr{\overline\LL_{n+1}^\prec(pb)}_R$,
 for all $p\in\A_n$ and $b\in\B$, then for all $x\in\langle\overline{\LL}_n^\preceq(\mu q)\rangle_R$,
 $(\rho-s_n)(x)$ belongs to $\langle\overline{\LL}_{n+1}^\prec(\mu q)\rangle_R$ for all
 $\mu q\in k^\times Q_{\geq0}$.
\end{enumerate}
\end{lemma}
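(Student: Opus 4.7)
The plan is to reduce both statements to the case of a single generator by linearity, then peel off the bimodule action and invoke Corollary \ref{coro1}.

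First I would consider a generator $x = \lambda\,\pi(b)\otimes p\otimes\pi(b')$ of $\llrr{\overline{\LL}_n^\preceq(\mu q)}_R$, with $b,b'\in\B$, $p\in\A_n$, and $\lambda b p b'\preceq\mu q$. For part (1), using that $d$ and $\delta_n$ are both $A$-bimodule maps, I would factor out the outer terms,
\[
(d-\delta_n)(x) \;=\; \lambda\,\pi(b)\cdot(d-\delta_n)(1\otimes p\otimes 1)\cdot\pi(b').
\]
The hypothesis puts $(d-\delta_n)(1\otimes p\otimes 1)$ in $\llrr{\overline{\LL}_{n-1}^\prec(p)}_R$, and Corollary \ref{coro1}(ii) then sends each generator through the bimodule action into $\llrr{\overline{\LL}_{n-1}^\prec(\lambda b p b')}_R$. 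Finally, the inclusion $\overline{\LL}_{n-1}^\prec(\lambda b p b')\subseteq\overline{\LL}_{n-1}^\prec(\mu q)$ follows from the fact that $\alpha\prec\beta\preceq\gamma$ implies $\alpha\prec\gamma$, which in turn uses antisymmetry and transitivity of $\preceq$.

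For part (2), the only structural change is that $\rho$ and $s_n$ are merely $A$-$E$-bimodule maps, so $\pi(b')$ cannot be pulled out of the tensor on the right. Instead I would factor
\[
(\rho-s_n)(x) \;=\; \lambda\,\pi(b)\cdot(\rho-s_n)(1\otimes p\otimes\pi(b')),
\]
apply the hypothesis at the argument $1\otimes p\otimes\pi(b')$ to land inside $\llrr{\overline{\LL}_{n+1}^\prec(pb')}_R$, and then apply Corollary \ref{coro1}(ii) with only the left action of $\lambda\pi(b)$ and trivial right action to reach $\llrr{\overline{\LL}_{n+1}^\prec(\lambda b p b')}_R\subseteq\llrr{\overline{\LL}_{n+1}^\prec(\mu q)}_R$.

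The whole argument is essentially bookkeeping once Corollary \ref{coro1} is available; the main subtlety — and the only place care is needed — is the asymmetric form of the hypothesis in part (2), which is tailored precisely so that the generator of $\overline{\LL}_n^\preceq(\mu q)$ can be presented with $\pi(b')$ already placed inside the tensor factor. If the right-hand factor were left outside, one could not guarantee its compatibility with the order after pushing through $\rho$ or $s_n$, since those maps are only $E$-linear on the right.
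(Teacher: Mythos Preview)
Your proof is correct and follows essentially the same approach as the paper's: reduce to a single generator $\lambda\pi(b)\otimes p\otimes\pi(b')$, pull out the bimodule action, apply the hypothesis, and invoke Corollary~\ref{coro1} together with $\lambda bpb'\preceq\mu q$. The paper dispatches part~(2) with ``the second part is analogous''; your explicit unpacking of the left-only factorization there is a welcome clarification rather than a departure.
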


\begin{proof}
 Given $\mu q\in k^\times Q_{\geq0}$ and $x\in
 \llrr{\overline\LL_n^\preceq(\mu q)}_R$, let us see that $(d-\delta_n)(x)\in
 \langle\overline{\LL}_{n-1}^\prec(\mu q)\rangle_R$. It suffices to prove the statement for 
 $x=\lambda\pi(b)\otimes p\otimes\pi(b')\in \overline{\LL}_n^\preceq(\mu q)$.
 
 By hypothesis, $(d-\delta_n)(1\otimes p\otimes1)$ belongs to 
 $\langle\overline{\LL}_{n-1}^\prec(p)\rangle_R$, so $(d-\delta_n)(x)$ equals
 $\lambda\pi(b)(d-\delta_n)(1\otimes p\otimes 1)\pi(b')$ and it belongs to
 $\llrr{\overline\LL_{n-1}^\prec(\lambda bpb')}_R\subseteq
  \langle\overline{\LL}_{n-1}^\prec(\mu q)\rangle_R$, using Corollary \ref{coro1}.
 
 The second part is analogous.
\end{proof}
 
 Next proposition will provide the remaining necessary tools for the proofs
 of Theorem \ref{teo1} and Theorem \ref{teo2}.

\begin{proposition}
\label{propfinal}
Fix $n\in\NN_0$. Suppose that for each $i\in\{0,\dots,n\}$ there are 
morphisms of $A$-bimodules 
$d_i:A\otimes_Ek\A_i\otimes_EA\to A\otimes_Ek\A_{i-1}\otimes_EA$, and 
morphisms 
of $A-E$-bimodules $\rho_i:A\otimes_Ek\A_{i-1}\otimes_EA\to 
A\otimes_Ek\A_i\otimes_EA$. Denote $d_{-1}=\mu$ and define 
$\rho_{-1}:A\to A\otimes_EA$ as $\rho(a)=a\otimes1$.
 
If the following conditions hold,
 \begin{enumerate}[(i)]
  \item $d_{i-1}\circ d_i=0$ for all $i\in\{0,\dots,n\}$,
  \item $(d_i-\delta_i)(1\otimes q\otimes 1)\in\llrr{\overline\LL_{i-1}^\prec(q)}_R$
   for all $i\in\{0,\dots,n\}$ and for all $q\in\A_i$,
  \item for all $i\in\{-1,\dots,n-1\}$ and for all $x\in 
A\otimes_Ek\A_i\otimes_EA$, $x=d_{i+1}\circ\rho_{i+1}(x) + \rho_i\circ 
d_i(x)$,
  \item $(\rho_i-s_i)(1\otimes 
q\otimes\pi(b))\in\llrr{\overline\LL_{i}^\prec(qb)}_R$
   for all $i\in\{0,\dots,n\}$, for all $q\in\A_i$ and for all $b\in\B$,
 \end{enumerate}
then:
\begin{enumerate}
 \item
 \label{propfinal2} If $d_{n+1}:A\otimes_Ek\A_{n+1}\otimes_EA\to 
A\otimes_Ek\A_n\otimes_EA$ is a map satisfying the following conditions:
 \begin{enumerate}[(i)]
   \item $d_n\circ d_{n+1}=0$,
   \item $(d_{n+1}-\delta_{n+1})(1\otimes q\otimes 1)
     \in\llrr{\overline\LL_n^\prec(q)}_R$,
 \end{enumerate}
then there exists a morphism $\rho_{n+1}:A\otimes_Ek\A_n\otimes_EA\to 
A\otimes_Ek\A_{n+1}\otimes_EA$ of $A-E$ bimodules such
that 
  \begin{enumerate}
   \item for all 
  $x\in A\otimes_Ek\A_n\otimes_EA$, $x=d_{n+1}\circ s_{n+1}(x) + s_n\circ d_n(x)$
   \item for all $q\in \mathcal{A}_n$ and for all $b \in \B$,  ($\rho_{n+1}-s_{n+1})(1\otimes q \otimes \pi(b)) \in \llrr{\LL_{n+1}^\prec(qb)}_R$.
 \end{enumerate}
 \item
 \label{propfinal1} there exists a morphism of $A$-bimodules 
 $d_{n+1}:A\otimes_Ek\A_{n+1}\otimes_EA\to A\otimes_Ek\A_n\otimes_EA$ such
 that
 \begin{enumerate}[(i)]
  \item $d_n\circ d_{n+1}=0$,
   \item $(d_{n+1}-\delta_{n+1})(1\otimes q\otimes 1)
     \in\llrr{\overline\LL_n^\prec(q)}_R$.
 \end{enumerate} 
 
\end{enumerate}
\end{proposition}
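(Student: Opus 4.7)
The plan is to handle the two parts via inductive constructions calibrated to the order $\preceq$, in the natural order: $d_{n+1}$ first (part (\ref{propfinal1})), then $\rho_{n+1}$ (part (\ref{propfinal2})).

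For $d_{n+1}$, on each generator $q\in\A_{n+1}$ I would set
\[
 d_{n+1}(1\otimes q\otimes 1)\colonequals \delta_{n+1}(1\otimes q\otimes 1)-\rho_n\bigl(d_n(\delta_{n+1}(1\otimes q\otimes 1))\bigr)
\]
and extend by $A$-bilinearity. The bound $(d_{n+1}-\delta_{n+1})(1\otimes q\otimes 1)\in\llrr{\overline\LL_n^\prec(q)}_R$ follows by chaining Lemma~\ref{lemma4}, Corollary~\ref{coro:diff_y_ord}(\ref{coro:diff_y_ord1}) and Lemma~\ref{lemma7}: starting from $\delta_{n+1}(1\otimes q\otimes 1)\in\llrr{\overline\LL_n^\preceq(q)}_\ZZ$, splitting $d_n=\delta_n+(d_n-\delta_n)$ lands $d_n\delta_{n+1}(1\otimes q\otimes 1)$ in $\llrr{\overline\LL_{n-1}^\prec(q)}_R$, and splitting $\rho_n=s_n+(\rho_n-s_n)$ pushes this into $\llrr{\overline\LL_n^\prec(q)}_R$. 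For $d_n\circ d_{n+1}=0$ on generators, let $y:=d_n\delta_{n+1}(1\otimes q\otimes 1)$; hypothesis~(i) gives $d_{n-1}(y)=0$, so hypothesis~(iii) at level $n-1$ yields $y=d_n\rho_n(y)$, whence $d_n d_{n+1}(1\otimes q\otimes 1)=y-d_n\rho_n(y)=0$.

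For $\rho_{n+1}$, I would define an $A$-$E$-bimodule morphism by prescribing
\[
 \rho_{n+1}(1\otimes q\otimes\pi(b))\colonequals s_{n+1}(1\otimes q\otimes\pi(b))+c_{q,b}
\]
on each generator $(q,b)\in\A_n\times\B$, with $c_{q,b}\in\llrr{\overline\LL_{n+1}^\prec(qb)}_R$ to be determined; left $A$-linearity then propagates the homotopy identity from these generators to all of $A\otimes_E k\A_n\otimes_E A$. Setting $w_{q,b}:=1\otimes q\otimes\pi(b)-d_{n+1}s_{n+1}(1\otimes q\otimes\pi(b))-\rho_n d_n(1\otimes q\otimes\pi(b))$, the condition on $c_{q,b}$ is $d_{n+1}(c_{q,b})=w_{q,b}$. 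Combining Corollary~\ref{coro:diff_y_ord}(\ref{coro:diff_y_ord2}) with the decompositions $d_{n+1}s_{n+1}=\delta_{n+1}s_{n+1}+(d_{n+1}-\delta_{n+1})s_{n+1}$, $\rho_n d_n=s_n d_n+(\rho_n-s_n)d_n$, and $d_n=\delta_n+(d_n-\delta_n)$, and using Lemmas~\ref{lemma4} and~\ref{lemma7}, one obtains $w_{q,b}\in\llrr{\overline\LL_n^\prec(qb)}_R$; moreover $d_n(w_{q,b})=0$ by $d_n d_{n+1}=0$ together with hypothesis~(iii) at level $n-1$ applied to $d_n(1\otimes q\otimes\pi(b))$ (whose $d_{n-1}$-image vanishes).

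Existence of $c_{q,b}$ reduces to a subsidiary claim: for every $\mu p\in k^\times Q_{\geq 0}$ and every $y\in\llrr{\overline\LL_n^\prec(\mu p)}_R$ with $d_n(y)=0$ there exists $c\in\llrr{\overline\LL_{n+1}^\prec(\mu p)}_R$ with $d_{n+1}(c)=y$. I would prove this by well-founded recursion: setting $y_0:=y$, $c^{(k)}:=s_{n+1}(y_k)\in\llrr{\overline\LL_{n+1}^\prec(\mu p)}_R$ by Lemma~\ref{lemma4}, and $y_{k+1}:=y_k-d_{n+1}(c^{(k)})$, each $y_{k+1}$ remains in $\llrr{\overline\LL_n^\prec(\mu p)}_R$ with $d_n(y_{k+1})=0$ while its basis terms strictly descend in the multiset extension of $\preceq$; by DCC the sequence terminates with $y_K=0$, and $c:=\sum_{k<K}c^{(k)}$ does the job. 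The main obstacle is precisely this strict descent: the term $s_n\delta_n(y_k)$ produced by Corollary~\ref{coro:diff_y_ord}(\ref{coro:diff_y_ord2}) is a priori only $\preceq$-bounded, but $d_n(y_k)=0$ combined with Lemma~\ref{lemma7} forces $\delta_n(y_k)\in\llrr{\overline\LL_{n-1}^\prec(\mu p)}_R$, whence $s_n\delta_n(y_k)\in\llrr{\overline\LL_n^\prec(\mu p)}_R$ by Lemma~\ref{lemma4}, which is exactly the extra strict inequality needed to close the recursion.
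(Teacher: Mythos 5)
Your construction of $d_{n+1}$ is exactly the paper's: the same formula $\delta_{n+1}(1\otimes q\otimes 1)-\rho_n\bigl(d_n(\delta_{n+1}(1\otimes q\otimes 1))\bigr)$, the same chaining of Lemma \ref{lemma4}, Corollary \ref{coro:diff_y_ord} and Lemma \ref{lemma7} for the bound, and the same use of hypotheses (i) and (iii) at level $n-1$ to get $d_n\circ d_{n+1}=0$. For $\rho_{n+1}$ you also isolate the same correction term $w_{q,b}$ (the paper's $\xi$) and prove the same two facts about it, namely $w_{q,b}\in\llrr{\overline\LL_n^\prec(qb)}_R$ and $d_n(w_{q,b})=0$, by the same decompositions. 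You then diverge: the paper defines $\rho_{n+1}$ by a well-founded recursion on $(k^\times Q_{\geq0},\preceq)$, setting $\rho_{n+1}(1\otimes q\otimes\pi(b))=s_{n+1}(1\otimes q\otimes\pi(b))+\rho_{n+1}(\xi)$ and using the homotopy identity already secured on strictly smaller elements to obtain $\xi=d_{n+1}\rho_{n+1}(\xi)$, whereas you solve $d_{n+1}(c)=w_{q,b}$ directly by iterating $y\mapsto y-d_{n+1}s_{n+1}(y)$ and summing the corrections $s_{n+1}(y_k)$. That is a legitimate alternative, and it has the virtue of decoupling the choice of $c_{q,b}$ from the values of $\rho_{n+1}$ on other generators.

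However, the termination argument for your iteration has a gap as written. You control the problematic term by showing $s_n\delta_n(y_k)\in\llrr{\overline\LL_n^\prec(\mu p)}_R$; this only re-establishes the ambient bound $\prec\mu p$ that $y_k$ already satisfies, and a sequence confined to $\llrr{\overline\LL_n^\prec(\mu p)}_R$ need not terminate (for instance the iteration stalls if $d_{n+1}s_{n+1}(y_k)=0$ for some nonzero $y_k$). For the claimed multiset descent you need each basis term of $y_{k+1}$ to lie strictly below some basis term of $y_k$, and for that the lemmas must be applied term by term: writing $y_k=\sum_jx_j$ with $x_j\in\overline\LL_n^\prec(\mu p)$ of value $\nu_jq_j$, the proof of Lemma \ref{lemma7} (via Corollary \ref{coro1}) gives $(d_n-\delta_n)(x_j)\in\llrr{\overline\LL_{n-1}^\prec(\nu_jq_j)}_R$, so $d_n(y_k)=0$ yields $s_n\delta_n(y_k)=-\sum_js_n\bigl((d_n-\delta_n)(x_j)\bigr)$ with each summand in $\llrr{\overline\LL_n^\prec(\nu_jq_j)}_R$ by Lemma \ref{lemma4}; the remaining two pieces, coming from Corollary \ref{coro:diff_y_ord}(\ref{coro:diff_y_ord2}) and from $(d_{n+1}-\delta_{n+1})\circ s_{n+1}$, are likewise bounded by $\prec\nu_jq_j$ summand by summand. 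With that refinement your recursion closes; without it, the strict descent does not follow from the bounds you actually invoke.
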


\begin{proof}
 In order to prove (\ref{propfinal1}), fix $q\in\A_{n+1}$. By Lemma 
 \ref{lemma4}, $\delta_{n+1}(1\otimes q\otimes 1)$ belongs to 
 $\llrr{\overline\LL_n^\preceq(q)}_\ZZ$ and using Lemma \ref{lemma7}, 
 $(d_n-\delta_n)(\delta_{n+1}(1\otimes q\otimes 1))$ belongs to 
 $\llrr{\overline\LL_{n-1}^\prec(q)}_R$. Corollary \ref{coro:diff_y_ord} tells
 us that $\delta_n\circ\delta_{n+1}(1\otimes q\otimes 1)$ is in
 $\llrr{\overline\LL_{n-1}^\prec(q)}_\ZZ$. We deduce from the equality
 \[
 d_n(\delta_{n+1}(1\otimes q\otimes 1))=
     \delta_n\circ\delta_{n+1}(1\otimes q\otimes 1) 
     + (d_n-\delta_n)(\delta_{n+1}(1\otimes q\otimes 1))
 \]
 that $d_n(\delta_{n+1}(1\otimes q\otimes 1))$ belongs to 
 $\llrr{\overline\LL_{n-1}^\prec(q)}_R$.

 Let us define $\tilde d_{n+1}: A\times k\A_{n+1}\times A\to 
 A\otimes_Ek\A_n\otimes_EA$ by
 \[
  \tilde d_{n+1}(a,q,c)=a\delta_{n+1}(1\otimes q\otimes 1)c 
   -a\rho_n(d_n(\delta_{n+1}(1\otimes q\otimes 1)))c,
 \]
 for $a,c\in A$, $q\in\A_{n+1}$. The map $\tilde d_{n+1}$ is $E$-multilinear
 and balanced, and it induces a unique map
 \[
  d_{n+1}:A\otimes_Ek\A_{n+1}\otimes_EA\to A\otimes_Ek\A_n\otimes_EA.
 \]
 It is easy to verify that $d_{n+1}$ is in fact a morphism of $A$-bimodules.
 
 Putting together the equality $\rho_n=s_n+(\rho_n-s_n)$ and Lemmas \ref{lemma4}
 and \ref{lemma7}, we obtain that $(d_{n+1}-\delta_{n+1})(1\otimes q\otimes 1)= -\rho_n\circ d_n\circ \delta_{n+1}(1\otimes q\otimes 1)$
 belongs to $\llrr{\overline\LL_n^\prec(q)}_R$.
 Moreover, given $x\in A\otimes_Ek\A_{n-1}\otimes_EA$, 
 $x=d_n\circ\rho_n(x)+\rho_{n-1}\circ d_{n-1}(x)$, choosing $x=d_n(\delta_{n+1}(1\otimes q\otimes 1))$
 yields the equality
 \[
  d_n\circ \delta_{n+1}(1\otimes q\otimes 1) = d_n\circ\rho_n\circ d_n\circ\delta_{n+1}(1\otimes q\otimes 1)
 \]
 which proves that $d_n\circ d_{n+1}=0$.
 
 For the proof of (\ref{propfinal2}), fix $q\in\A_n$ and $b\in\B$. Using 
Lemmas
 \ref{lemma4} and \ref{lemma7}, we deduce that the element
 \[
 \begin{aligned}
  &1\otimes q\otimes\pi(b) -\rho_n\circ d_n(1\otimes q\otimes\pi(b)) \\
  &= 1\otimes q\otimes\pi(b) - \rho_n\circ\delta_n(1\otimes q\otimes\pi(b))-\rho_n\circ(d_n-\delta_n)(1\otimes q\otimes\pi(b))
 \end{aligned}
 \]
 differs from 
 $1\otimes q\otimes\pi(b)-\rho_n\circ\delta_n(1\otimes q\otimes\pi(b))$ by 
 elements in $\llrr{\overline\LL_n^\prec(qb)}_R$. We will write that
 \[
  (\id-\rho_n\circ\delta_n+\rho_n\circ(d_n-\delta_n))(1\otimes q\otimes\pi(b))
  \equiv \id-\rho_n\circ\delta_n(1\otimes q\otimes\pi(b)) 
  \:\mbox{mod}\llrr{\overline\LL_n^\prec(qb)}_R.
 \]
 Also,
 \[
  \begin{aligned}
  (\id-\rho_n\circ\delta_n)(1\otimes q\otimes\pi(b))&\equiv
  (\id-s_n\circ\delta_n)(1\otimes q\otimes\pi(b))\:
    \mbox{mod}\llrr{\overline\LL_n^\prec(qb)}_R\\
  &\equiv \delta_{n+1}\circ s_{n+1}(1\otimes q\otimes\pi(b))\:
    \mbox{mod}\llrr{\overline\LL_n^\prec(qb)}_R\\
  &\equiv d_{n+1}\circ s_{n+1}(1\otimes q\otimes\pi(b))\:
    \mbox{mod}\llrr{\overline\LL_n^\prec(qb)}_R.
  \end{aligned}
 \]
 We deduce from this that there exists a unique $\xi\in\llrr{\overline\LL_n^\prec(qb)}_R$
 such that 
 \[(\id-\rho_n\circ d_n)(1\otimes q\otimes\pi(b)) = 
  d_{n+1}\circ s_{n+1}(1\otimes q\otimes\pi(b)) + \xi.
 \]
 It is evident that
 $\xi$ belongs to the kernel of $d_n$.
 
 The order $\preceq$ satisfies the descending chain condition, so we can use
 induction
 on
 $(k^\times Q_{\geq0},\preceq)$. If there is no $\lambda p\in k^\times Q_{\geq0}$ is 
 such that $\lambda p\prec qb$, then $\xi=0$ and we define 
 $\rho_{n+1}(1\otimes q\otimes\pi(b))=s_{n+1}(1\otimes q\otimes\pi(b))$. 
 Inductively, suppose that $\rho_{n+1}(\xi)$ is defined. The equality 
 $d_n(\xi)=0$ implies that $\xi=d_{n+1}\circ\rho_{n+1}(\xi)$ and 
 \[
  (\id-\rho_n\circ d_n)(1\otimes q\otimes\pi(b)) =
    d_{n+1}(s_{n+1}(1\otimes q\otimes\pi(b)) + \rho_{n+1}(\xi)).
 \]
 We define $\rho_{n+1}(1\otimes q\otimes\pi(b))\colonequals 
  s_{n+1}(1\otimes q\otimes\pi(b)) + \rho_{n+1}(\xi)$.
  
 Lemmas \ref{lemma4} and \ref{lemma7} assure that $\rho_{n+1}(\xi)$ belongs to
 $\llrr{\overline\LL_{n+1}^\prec(qb)}_R$, and as a consequence
 \[
  \rho_{n+1}(1\otimes q\otimes\pi(b))-s_{n+1}(1\otimes q\otimes\pi(b))\in
  \llrr{\overline\LL_{n+1}^\prec(qb)}_R.
 \]
 
\end{proof}
 We are now ready to prove the theorems.
\begin{proof}[Proof of Theorem \ref{teo1}] 
  We will prove the existence of an $A-E$-bimodule map $\rho_0:
 A\otimes_Ek\A_{-1}\otimes_EA\to A\otimes_Ek\A_0\otimes_EA$ satisfying 
 $d_0\circ\rho_0 + \rho_{-1}\circ d_{-1} = \id$, where $d_{-1}=\mu$ and $\rho_{-1}(a)=s_{-1}(a)=a\otimes 1$ for all $a \in A$. 
 Once this achieved, we apply
 Proposition \ref{propfinal} inductively with $R=k$, for all $n$ such that
 $0\leq n\leq N-1$, obtaining this way an homotopy retraction of the complex
 \[
  \xymatrix{A\otimes_Ek\A_N\otimes_EA\ar[r]^-{d_N}&
            \cdots\ar[r]^-{d_0}&
            A\otimes_EA\ar[r]^-{d_{-1}}&
            A\ar[r]&
            0}
 \]
 proving thus that it is exact. 


 Given $b=b_k\cdots b_1\in\B$, with $b_i\in Q_1$, $1\leq i\leq k$,
 \[
  s_0(1\otimes\pi(b))=
   -\sum_i\pi(b_k\cdots b_{k-i+1})\otimes b_{k-i}\otimes\pi(b_{k-i-1}\cdots b_1).
 \]
 On one hand 
 $1\otimes\pi(b)-\pi(b)\otimes1=1\otimes\pi(b)-s_{-1}(d_{-1}(1\otimes\pi(b)))$
 and on the other hand the left hand term equals 
 $\delta_0(s_0(1\otimes\pi(b)))$, yielding
 $1\otimes\pi(b)-s_{-1}(1\otimes\pi(b))=\delta_0(s_0(1\otimes\pi(b))$. By 
 hypothesis, $(d_0-\delta_0)(1\otimes\pi(b))$ belongs to 
 $\llrr{\overline\LL_{-1}^\prec(b)}_k$, and  so there exists $\xi\in
 \llrr{\overline\LL_{-1}^\prec(b)}_k$ such that 
 \[
  1\otimes\pi(b)-s_{-1}(d_{-1}(1\otimes\pi(b)))=d_0(s_0(1\otimes\pi(b)))+\xi.
 \]
 It follows that $d_{-1}(\xi)=0$. Suppose first that there exists no 
 $\lambda p\in k^\times Q_{\geq0}$ such that $\lambda p\prec b$.
 
 In this case $\xi=0$ and we define $\rho_0(1\otimes\pi(b)) 
 = s_0(1\otimes\pi(b))$. Inductively, suppose that $\rho_0(\xi)$ is defined for any $\xi$ such that $d_{-1}(\xi)=0$.
 Since in this case $\xi=d_0(\rho_0(\xi))$, we set
 $\rho_0(1\otimes\pi(b)) \colonequals s_0(1\otimes\pi(b)) + \rho_0(\xi)$.
\end{proof}
  
\begin{proof}[Proof of Theorem \ref{teo2}]
 It follows from the proof of Theorem \ref{teo1} that 
 \[
  1\otimes\pi(b)=(s_{-1}\circ d_{-1} + \delta_0\circ s_0)(1\otimes\pi(b))
 \]
 and so $s_{-1}\circ d_{-1} + \delta_0\circ s_0 = \id_{A\otimes_EA}$. Setting
 $d_0\colonequals\delta_0$, the theorem follows applying Proposition 
 \ref{propfinal} for $R=\ZZ$.
\end{proof}

We finish this section showing that this construction is a generalization of
Bardzell's resolution for monomial algebras.

\begin{proposition}
 Given an  algebra $A$, 
 let $(A\otimes_Ek\A_\bullet\otimes_EA,d_\bullet)$ be a 
 resolution of $A$ as $A$-bimodule such that $d_\bullet$ satisfies the 
 hypotheses of Theorem \ref{teo1}. If $p\in\A_n$ is such that $r(p)=0$ or
 $r(p)=p$ for every reduction $r$, then for all $a,c\in kQ$,
 \[
  d_n(\pi(a)\otimes p\otimes\pi(c)) = \delta_n(\pi(a)\otimes p\otimes\pi(c)).
 \]
\end{proposition}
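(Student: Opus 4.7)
The plan is to apply hypothesis (2) of Theorem \ref{teo1} to $d_n$ and show that the error term $(d_n - \delta_n)(1\otimes p\otimes 1)$ is forced to vanish because $\overline{\LL}_{n-1}^\prec(p)=\emptyset$ under the assumption on $p$. Once that is established, $A$-bimodule linearity gives the full conclusion, since $\pi(a)\otimes p\otimes\pi(c) = \pi(a)\cdot(1\otimes p\otimes 1)\cdot\pi(c)$ and both $d_n$ and $\delta_n$ are $A$-bimodule maps.

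First I would analyze the order $\preceq$ restricted to elements below $p$. By definition, $\mu q \preceq p$ holds via a finite chain of one-step relations $\lambda s \preceq \nu r$, each coming from a reduction $t$ with $t(\nu r) = \lambda s + x$ and $s\notin x$. Look at the last step of any such chain ending at $p$: the hypothesis gives $t(p) \in \{0, p\}$. If $t(p) = 0$, then $\lambda s = -x$ combined with $s\notin x$ forces $\lambda = 0$, contradicting $\lambda \in k^\times$. If $t(p) = p$, then $\lambda s + x = p$ combined with $s\notin x$ forces $\lambda = 1$, $s = p$, $x = 0$. Iterating this argument backward along the chain, every $\mu q \preceq p$ with $\mu\in k^\times$ must satisfy $\mu q = p$.

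In particular, there is no $\lambda bp'b'$ with $b,b'\in\B$, $p'\in\A_{n-1}$, and $\lambda \in k^\times$ satisfying $\lambda bp'b'\prec p$ (the only admissible equality $\lambda bp'b'=p$ being excluded by the strict inequality). Hence $\overline{\LL}_{n-1}^\prec(p)=\emptyset$, and therefore $\llrr{\overline{\LL}_{n-1}^\prec(p)}_k = 0$. By hypothesis (2) of Theorem \ref{teo1}, $(d_n-\delta_n)(1\otimes p\otimes 1) \in \llrr{\overline{\LL}_{n-1}^\prec(p)}_k = 0$, so $d_n(1\otimes p\otimes 1) = \delta_n(1\otimes p\otimes 1)$, and applying $\pi(a)\cdot(-)\cdot\pi(c)$ on both sides yields the claimed equality for all $a,c\in kQ$.

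There is no substantive technical obstacle here; the only point requiring care is correctly tracing the one-step relations backward along chains in $\preceq$, and noting that the hypothesis "$r(p)=0$ or $r(p)=p$ for every reduction $r$" is exactly what is needed to propagate the argument through compositions of basic reductions.
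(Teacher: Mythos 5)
Your proposal is correct and follows essentially the same route as the paper: both arguments reduce to observing that the hypothesis on $p$ forces $\overline{\LL}_{n-1}^\prec(p)$ to be empty, so hypothesis (2) of Theorem \ref{teo1} gives $(d_n-\delta_n)(1\otimes p\otimes 1)=0$, and $A$-bimodule linearity finishes the proof. The only difference is that you spell out the backward induction along chains in $\preceq$ showing that no $\lambda' p'\in k^\times Q_{\geq0}$ satisfies $\lambda' p'\prec p$, a step the paper asserts directly.
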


\begin{proof}
 By hypothesis, there exists no $\lambda'p'\in k^\times Q_{\geq0}$ such that
 $\lambda'p'\prec p$, so $\LL_{n-1}^\prec(p)=\{0\}$ and 
 $d_n(1\otimes p\otimes 1)=\delta_n(1\otimes p\otimes 1)$. Given 
 $a,c\in kQ$ we deduce from the previous equality that 
 \[
  d_n(\pi(a)\otimes p\otimes\pi(c)) -\delta_n(\pi(a)\otimes p\otimes\pi(c))
  = \pi(a)(d_n(1\otimes p\otimes 1) - \delta_n(1\otimes p\otimes 1))\pi(c)=0.
 \]
\end{proof}

\begin{corollary}
 Suppose 
 the algebra $A=kQ/I$ has a monomial 
 presentation. Choose a reduction system $\mathcal R$ whose pairs have the monomial relations generating the ideal $I$ as first coordinate and $0$ as second coordinate. 
 In this case, the only maps $d$ verifying the hypotheses of Theorem
 $4.2$ are those of Bardzell's resolution.
\end{corollary}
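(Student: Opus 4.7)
The plan is to apply the preceding proposition uniformly to every ambiguity $p \in \A_n$. The key observation is that with the reduction system $\mathcal{R} = \{(s,0) : s \in S\}$, every basic reduction is especially simple: for a basic reduction $r_{a,s,c}$ and a path $p$, we have $r_{a,s,c}(p) = p$ whenever $p \neq asc$, while $r_{a,s,c}(asc) = a \cdot 0 \cdot c = 0$. In particular, $r_{a,s,c}(p) \in \{0, p\}$ for every path $p$.

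Next, I would extend this observation to arbitrary reductions. Since any reduction is a composition of basic reductions, and since the property ``$r(p) \in \{0, p\}$ for every path $p$'' is preserved under composition (the zero map absorbs any further reductions, and fixed points remain fixed), it follows by induction on the length of the reduction that $r(p) \in \{0, p\}$ for \emph{every} reduction $r$ and every path $p$. In particular, this holds for every $p \in \A_n$, which is the precise hypothesis needed to invoke the preceding proposition.

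Now I would apply the previous proposition: any differential $d_\bullet$ satisfying the hypotheses of Theorem \ref{teo1} (equivalently, the conditions appearing in Theorem \ref{teo2}) must satisfy
\[
 d_n(\pi(a)\otimes p\otimes\pi(c)) = \delta_n(\pi(a)\otimes p\otimes\pi(c))
\]
for every $p \in \A_n$ and $a,c \in kQ$. Since the elements of the form $\pi(a)\otimes p\otimes\pi(c)$ span $A\otimes_E k\A_n\otimes_E A$ as a $k$-vector space, this forces $d_n = \delta_n$ on the nose.

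Finally, I would point out that in the monomial case $A = A_S$, so $\pi = \pi'$, $i = i'$, and consequently $\delta_n = \delta_n' = \pi'_{n-1} \circ f_n \circ i'_n$, which is by construction Bardzell's differential as recalled at the beginning of Section \ref{s:Resolution}. There is essentially no obstacle here beyond making the ``$r(p) \in \{0,p\}$'' propagation explicit; the technical work has already been done in the preceding proposition and in Lemma \ref{lemma1}.
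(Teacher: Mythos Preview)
Your proof is correct and matches the paper's intent: the paper states this corollary without proof, as an immediate consequence of the preceding proposition, and your argument is precisely the natural way to fill in that gap. The only minor remark is that once you know $d_n(1\otimes p\otimes 1)=\delta_n(1\otimes p\otimes 1)$ for all $p\in\A_n$, the equality $d_n=\delta_n$ follows directly from $A$-bilinearity, so the spanning argument is already implicit in the proposition itself.
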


\section{Morphisms in low degrees}
\label{s:Morphisms}
 In this section we describe the morphisms appearing in lower degrees of the resolution.
 
 Let us consider the following data: an algebra $A=kQ/I$ and a reduction system $\mathcal R$ satisfying condition $\lozenge$. 
 
 We start by recalling the definition of $\delta_0$ and $\delta_{-1}$. For $a,c\in kQ$, 
 $\alpha\in Q_1$,
 \[
  \begin{aligned}
   &\delta_{-1}:A\otimes_EA\to A,& &\delta_{-1}(\pi(a)\otimes\pi(c))=\pi(ac) \mbox{ and }\\
   &\delta_0:A\otimes_Ek\A_0\otimes_EA\to A\otimes_EA,& 
     &\delta_0(\pi(a)\otimes \alpha\otimes\otimes(c)) = \pi(a\alpha)\otimes\pi(c)
                                               - \pi(a)\otimes\pi(\alpha c).
  \end{aligned}
 \]

\begin{definition} We state some definitions.
\begin{itemize}
 \item Let $\phi_0:kQ\to A\otimes_Ek\A_0\otimes_EA$ be the unique $k$-linear map such that
 \[
  \phi_0(c)=
  \sum_{i=1}^n\pi(c_n\cdots c_{i+1})\otimes c_i\otimes\pi(c_{i-1}\cdots c_1)
 \]
 for $c\in Q_{\geq0}$, $c=c_n\cdots c_1$ with $c_i\in Q_1$ for all $i$, 
 $1\leq i\leq n$.
 
 \item Given a basic reduction $r=r_{a,s,c}$, let 
 $\phi_1(r,-):kQ\to A\otimes_Ek\A_1\otimes_EA$ be the unique $k$-linear map 
 such that, given $p\in Q_{\ge 0}$ 
 \[
  \phi_1(r,p) = \begin{dcases*}
                 \pi(a)\otimes s\otimes \pi(c), & if $p=asc$,\\
                 0                              & if not.
                \end{dcases*}
 \]
 In case $r=(r_n,\dots,r_1)$ is a reduction, where $r_i$ is a basic reduction
 for all $i$, $1\leq i\leq n$, we denote $r'=(r_n,\dots,r_2)$ and we define in a 
 recursive way the map $\phi_1(r,-)$ as the unique $k$-linear map from 
$kQ$ to
 $A\otimes_Ek\A_1\otimes_EA$ such that
 \[
  \phi_1(r,p)=\phi_1(r_1,p) + \phi_1(r',r_1(p)).
 \]
 \item Finally, we define an $A$-bimodule morphism 
 $d_1:A\otimes_Ek\A_1\otimes_EA\to A\otimes_Ek\A_0\otimes_EA$ by the 
 equality
 \[
  d_1(1\otimes s\otimes 1)=\phi_0(s)-\phi_0(\beta(s)),\mbox{ for all }s\in\A_1.
 \]
\end{itemize}
\end{definition}
 Next we prove four lemmas necessary to the description of 
 the complex in low degrees.
 
\begin{lemma}
\label{lemma:phi_1menores}
 Let us consider $p\in Q_{\geq0}$ and $x\in kQ$ such that $x\prec p$. For any
 reduction $r$ the element $\phi_1(r,x)$ belongs to 
 $\llrr{\overline\LL_1^\prec(p)}_\ZZ$.
\end{lemma}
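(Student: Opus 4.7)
The plan is to argue by induction on the length $n$ of the tuple $r = (r_n, \dots, r_1)$ of basic reductions, after first using $\ZZ$-linearity of $\phi_1(r,-)$ to reduce to the case $x = \mu q$ with $\mu q \prec p$ and $\mu q \in k^\times Q_{\geq 0}$.

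For the base case, where $r$ is a single basic reduction $r_{a,s,c}$, the definition of $\phi_1$ gives $\phi_1(r, \mu q) = 0$ unless $q = asc$, and in the latter case $\phi_1(r, \mu q) = \mu\,\pi(a) \otimes s \otimes \pi(c)$. Writing $\beta(a) = \sum_i \lambda_i b_i$ and $\beta(c) = \sum_j \lambda_j' b_j'$ with $b_i, b_j' \in \B$ and using $\pi(a) = \pi(\beta(a))$, one obtains
\[
  \phi_1(r, \mu q) = \sum_{i,j} \mu \lambda_i \lambda_j' \, \pi(b_i) \otimes s \otimes \pi(b_j').
\]
Since $\beta(a) \preceq a$ and $\beta(c) \preceq c$, and $\preceq$ is compatible with concatenation, each summand satisfies $\mu \lambda_i \lambda_j' \, b_i s b_j' \preceq \mu asc = \mu q \prec p$ and hence lies in $\overline{\LL}_1^\prec(p)$.

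For the inductive step, set $r' = (r_n, \dots, r_2)$ and apply the recursive definition
\[
  \phi_1(r, \mu q) = \phi_1(r_1, \mu q) + \phi_1(r', r_1(\mu q)).
\]
The first summand is handled by the base case. For the second, every scaled path $\mu' q'$ appearing in the expansion of $r_1(\mu q)$ satisfies $\mu' q' \preceq \mu q$ directly from the definition of $\preceq$, with $r_1$ itself as witness: if $r_1$ fixes $q$ this is trivial, and otherwise $q = a_1 s_1 c_1$ with $r_1 = r_{a_1,s_1,c_1}$, so $r_1(\mu q) = \sum_t \mu \lambda_t\, a_1 t c_1$ (where $f_{s_1} = \sum_t \lambda_t t$), and each $\mu \lambda_t\, a_1 t c_1 \preceq \mu q$ by inspection. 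Transitivity of the orders then gives $\mu' q' \prec p$ for every such monomial, and $\ZZ$-linearity combined with the inductive hypothesis applied to the shorter reduction $r'$ places $\phi_1(r', r_1(\mu q))$ in $\llrr{\overline{\LL}_1^\prec(p)}_\ZZ$.

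The main obstacle is the bookkeeping around strictness of the order: the key is that the hypothesis $\mu q \prec p$ is strict, so that transitivity bridges the natural estimate $\cdot \preceq \mu q$ with the desired $\cdot \prec p$. All other steps are routine, relying only on the compatibility of $\preceq$ with concatenation, the elementary behavior of a single basic reduction on a scaled path, and the recursive definition of $\phi_1$.
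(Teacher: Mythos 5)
Your proof is correct, and it is organized around a different induction than the paper's. The paper fixes $p$ and performs Noetherian induction on the monomial $\mu q$ with respect to the well-founded order $(k^\times Q_{\geq0},\preceq)$: the base case is the absence of elements strictly below $\mu q$, the inductive step passes from $\mu q$ to the monomials of $r_1(\mu q)$, and the descent on the length of the reduction appears only implicitly, in the cases where $r_1$ acts trivially and $\phi_1(r,x)=\phi_1(r',x)$. You instead run an ordinary induction on the length $n$ of the reduction word, quantifying over all monomials $\mu q\prec p$ at each stage; this works because the recursion $\phi_1(r,x)=\phi_1(r_1,x)+\phi_1(r',r_1(x))$ strictly decreases $n$ in the second term, and the only order-theoretic input needed is that every monomial of $r_1(\mu q)$ is $\preceq\mu q$. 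Your base case reproves inline the content of Lemma \ref{lemma2} (rewriting $\mu\pi(a)\otimes s\otimes\pi(c)$ through $\beta$), which the paper simply cites. Two small points: where you invoke ``transitivity'' to pass from $\mu'q'\preceq\mu q\prec p$ to $\mu'q'\prec p$, you also need the antisymmetry of $\preceq$ (to exclude $\mu'q'=1\cdot p$), which the paper has established; and your reduction to monomials via linearity treats $x\prec p$ as ``every monomial of $x$ is strictly below $p$'', exactly as the paper's own proof does. The trade-off is that your argument avoids any appeal to the descending chain condition for $\preceq$ and is somewhat easier to verify, while the paper's scheme is the one it reuses where Noetherian induction on $\preceq$ is genuinely unavoidable (e.g.\ in Proposition \ref{propfinal}).
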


\begin{proof}
 We will first prove the result for $x=\mu q\in k^\times Q_{\geq0}$. The general
 case will then follow by linearity. Fix $x=\mu q\in k^\times Q_{\geq0}$. We
 will use an inductive argument on ${(k^\times Q_{\geq0},\preceq)}$. 
 
 To start the induction, suppose first that there exists no 
 $\mu'q'\in k^\times Q_{\geq0}$ and that $\mu'q'\prec\mu q=x$. In this case,
 every basic reduction $r_{a,s,c}$ satisfies either $r_{a,s,c}(x)=x$ or 
 $r_{a,s,c}=0$. In the first case, $asc\neq q$ and so $\phi_1(r_{a,s,c},x)=0$.
 In the second case, $asc= q$, so 
 $\phi_1(r_{a,s,c},x)=\mu\pi(a)\otimes s\otimes\pi(c)$.
 
 Given an arbitrary reduction $r=(r_n,\dots,r_1)$ with $r_i$ basic for all $i$,
 there are three possible cases.
 
 \begin{enumerate}
  \item $r_1(x)=x$ and $n>1$,
  \label{lemmaitemdem1}
  \item $r_1(x)=x$ and $n=1$,
  \label{lemmaitemdem2}
  \item $r_1(x)=0$.
  \label{lemmaitemdem3}
 \end{enumerate}
 Denote $r'=(r_n,\dots,r_2)$ as before and $r_1=r_{a,s,c}$. In case \ref{lemmaitemdem1}), 
 $\phi_1(r,x)=\phi_1(r',x)$. In case \ref{lemmaitemdem3}), 
 $\phi_1(r,x)=\phi_1(r_1,x)=0.$ Finally, in case \ref{lemmaitemdem2}), 
 $\phi_1(r,x)=\phi_1(r_1,x)=\mu\pi(a)\otimes s\otimes\pi(c)$. Using Lemma 
 \ref{lemma2}, we obtain that in all three cases 
 $\phi_1(r,x)\in\llrr{\overline\LL_1^\prec(p)}_\ZZ$.
 
 Next, suppose that $x=\mu q$ and that the result holds for 
 $\mu'q'\in k^\times Q_{\geq0}$ such that $\mu'q'\prec\mu q=x$. Let us consider
 $r,r_1$ and $r'$ as before. Again, there are three possible cases:
 \begin{enumerate}
  \item $asc=q$,
  \label{lemmaitemdem11}
  \item $asc\neq q$ and $n>1$,
  \label{lemmaitemdem22}
  \item $asc\neq q$ and $n=1$.
  \label{lemmaitemdem33}
 \end{enumerate}
 Case \ref{lemmaitemdem33}) is immediate, since in this situation $\phi_1(r,x)=
 0$. The second case reduces to the other ones, since $\phi_1(r,x)=\phi_1(r',x)$
 In the
 first case,
 \[
  \phi_1(r,x)=\mu\pi(a)\otimes s\otimes\pi(c) + \phi_1(r',r_1(x)).
 \]
 We know that $r_1(x)\prec x$, and we may write it as a finite sum $r_1(x)=\sum_i\mu_iq_i$.
 Using the inductive hypothesis, we deduce that
 $\phi_1(r,x)\in\llrr{\overline\LL_1^\prec(p)}_\ZZ$.
\end{proof}

\begin{lemma}
\label{lemma:delta0d1igual0}
 For all $x\in A\otimes_Ek\A_1\otimes_EA$, $x$ belongs to the kernel of $\delta_0\circ d_1(x)$.
\end{lemma}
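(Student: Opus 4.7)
The plan is to reduce the statement, by $A$-bimodule linearity of both $d_1$ and $\delta_0$, to checking that $\delta_0 \circ d_1(1\otimes s\otimes 1) = 0$ for each generator $s \in \A_1$. By the definition of $d_1$, this amounts to showing
\[
 \delta_0(\phi_0(s)) = \delta_0(\phi_0(\beta(s))).
\]
So the key computation is to evaluate $\delta_0 \circ \phi_0$ on paths.

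First I would establish the telescoping identity: for any path $c = c_n\cdots c_1$ with $c_i \in Q_1$,
\[
 \delta_0(\phi_0(c)) = \pi(c)\otimes 1 - 1\otimes \pi(c).
\]
Indeed, using the definitions of $\phi_0$ and $\delta_0$, each summand $\delta_0(\pi(c_n\cdots c_{i+1})\otimes c_i\otimes \pi(c_{i-1}\cdots c_1))$ unfolds as $\pi(c_n\cdots c_i)\otimes \pi(c_{i-1}\cdots c_1) - \pi(c_n\cdots c_{i+1})\otimes \pi(c_i\cdots c_1)$, and the sum telescopes (with the convention that empty products give the appropriate vertices in $E$, which cross the tensor symbol freely). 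Extending $k$-linearly, the same identity holds for arbitrary $x \in kQ$:
\[
 \delta_0(\phi_0(x)) = \pi(x)\otimes 1 - 1\otimes \pi(x).
\]

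Applying this with $x = s$ and $x = \beta(s)$ and subtracting gives
\[
 \delta_0(d_1(1\otimes s\otimes 1)) = \bigl(\pi(s) - \pi(\beta(s))\bigr)\otimes 1 - 1\otimes\bigl(\pi(s) - \pi(\beta(s))\bigr).
\]
Now $\beta = i\circ\pi$ and $\pi\circ i = \id_A$ by the construction of $i$ in Remark \ref{remark:beta}, so $\pi(\beta(s)) = \pi(i(\pi(s))) = \pi(s)$. Both terms vanish, and we are done on generators; bimodule linearity then extends the conclusion to all of $A\otimes_E k\A_1\otimes_E A$.

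No real obstacle is expected: the only thing to be a bit careful about is the bookkeeping at the extremes $i=1$ and $i=n$ of the telescoping sum, where the empty products $\pi(c_0\cdots c_1)$ and $\pi(c_n\cdots c_{n+1})$ must be interpreted as the relevant source/target vertices in $E$ so that they can be absorbed across the tensor; this is exactly what makes the boundary terms $\pi(c)\otimes 1$ and $1\otimes \pi(c)$ come out correctly.
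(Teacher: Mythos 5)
Your proof is correct and follows essentially the same route as the paper's: reduce to a generator $1\otimes s\otimes 1$ by bimodule linearity, compute $\delta_0(\phi_0(x)) = \pi(x)\otimes 1 - 1\otimes\pi(x)$ by telescoping, and cancel using $\pi(\beta(s))=\pi(s)$. The paper states the computation in one line; you have merely made the telescoping and the identity $\pi\circ i = \id_A$ explicit.
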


\begin{proof}
 Since these maps are morphisms of $A$-bimodules, we may suppose 
 $x=1\otimes s\otimes 1$, with $s\in \mathcal{A}_1$. A direct computation gives
 \[
  \delta_0(d_1(1\otimes s\otimes 1)=\delta_0(\phi_0(s)-\phi_0(\beta(s))) =
    \pi(s)\otimes1-1\otimes\pi(s)-\pi(\beta(s))\otimes+1\otimes\pi(\beta(s))=
    0.
 \]
\end{proof}

\begin{lemma}
\label{lemmaphi0_producto}
 Given $a,c\in Q_{\geq0}$ and $p=\sum_{i=1}^n\lambda_ip_i\in kQ$, with 
 $p_i\in Q_{\geq0}$ for all $i$, we obtain the equality
 \[
  \phi_0(apc) = \phi_0(a)\pi(pc) + \pi(a)\phi_0(p)\pi(c) + \pi(ap)\phi_0(c).
 \]
\end{lemma}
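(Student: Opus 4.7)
The plan is to prove the identity by a direct bookkeeping argument from the definition of $\phi_0$, after first reducing to the case where $p$ is a single path.

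First, observe that both sides of the claimed equality are $k$-linear in $p$: the map $\phi_0$ is $k$-linear by construction, and $\pi$ is $k$-linear as well. Moreover, if any $p_i$ appearing in $p$ satisfies $ap_ic = 0$ in $kQ$, then all three terms on the right vanish for that summand (because the concatenations $\pi(ap_i)$ or $\pi(p_ic)$ or $\pi(ap_ic)$ are either zero, or the $E$-bimodule tensor is zero due to incompatible sources/targets), matching the zero contribution on the left. So we may assume $p = p_1 \in Q_{\geq 0}$ is a single path, with $apc \ne 0$.

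Next, write $a = a_k \cdots a_1$, $p = p_m \cdots p_1$, $c = c_l \cdots c_1$ with all factors in $Q_1$ (allowing $k$, $m$, or $l$ to be zero if the corresponding word is a vertex). Then $apc$ is a path of length $k+m+l$, and
\[
\phi_0(apc) = \sum_{i=1}^{k+m+l} \pi(y_{k+m+l}\cdots y_{i+1}) \otimes y_i \otimes \pi(y_{i-1}\cdots y_1),
\]
where $y_i$ denotes the $i$-th arrow (counted from the right) of $apc$. The main step is to partition this sum according to which of the three blocks $a$, $p$, or $c$ the arrow $y_i$ lies in. Concretely, I would split the range of $i$ into $\{1,\dots,l\}$, $\{l+1,\dots,l+m\}$, and $\{l+m+1,\dots,l+m+k\}$, and use that $\pi$ is a ring homomorphism to factor $\pi(y_{k+m+l}\cdots y_{i+1})$ and $\pi(y_{i-1}\cdots y_1)$ as products involving $\pi(a)$, $\pi(p)$, $\pi(c)$ according to which block $y_i$ belongs to. Matching the three blocks against the definition of $\phi_0$ applied to $a$, $p$, $c$ respectively yields exactly $\phi_0(a)\pi(pc)$, $\pi(a)\phi_0(p)\pi(c)$, and $\pi(ap)\phi_0(c)$.

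The only real subtlety, and thus the main (modest) obstacle, is the degenerate case where one of $a$, $p$, $c$ is a vertex: then the corresponding $\phi_0$ term is zero (empty sum) while the relevant $\pi$-factor becomes an idempotent of $E$, so one must verify that the tensor products over $E$ absorb these idempotents correctly. This follows from the $E$-bimodule structure of $A\otimes_E k\A_0 \otimes_E A$ together with the source/target compatibility ensuring $apc \neq 0$. Once this is checked in each of the (few) degenerate cases, the identity holds in general.
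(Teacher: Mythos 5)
Your proof is correct and follows the same route the paper indicates: the paper dispatches this lemma in one line as ``immediate using the definition of $\phi_0$ and $k$-linearity of $\phi_0$ and $\pi$,'' and your argument simply spells out that reduction to a single path plus the three-block partition of the sum over arrows of $apc$. The degenerate vertex cases you flag are handled exactly as you say, by the $E$-bimodule structure of the tensor product.
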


 The proof is immediate using the definition of $\phi_0$ and $k$-linearity of
 $\phi_0$ and $\pi$.
 
\medskip

 Next we prove the last of the preparatory lemmas.
 
\begin{lemma}
 \label{lemma:d_1phi_1phi_0}
 Given $p\in Q_{\geq0}$ and a reduction $r=(r_n,\dots,r_1)$, with $r_i$ a 
 basic reduction for all $i$ such that $1\leq i\leq n$, there is an equality
 \[
  d_1(\phi_1(r_1,p))=\phi_0(p)-\phi_0(r(p)).
 \]
\end{lemma}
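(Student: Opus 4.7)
The proof will proceed by induction on the length $n$ of the reduction $r=(r_n,\dots,r_1)$, treating the stated equality with the understanding that on the left-hand side we need the full $\phi_1(r,p)$ (built from the recursion in the definition of $\phi_1$) in order for both sides to balance. First I would note that both $\phi_1(r,-)$ and $\phi_0$ are $k$-linear, as is $d_1$, so it suffices to handle the case where $p$ is a single path.

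For the base case $n=1$, write $r_1=r_{a,s,c}$ with $(s,f)\in\mathcal{R}$. If $p\neq asc$, then $\phi_1(r_1,p)=0$ and $r_1(p)=p$, so both sides vanish. If $p=asc$, then $\phi_1(r_1,p)=\pi(a)\otimes s\otimes\pi(c)$, and since $d_1$ is a morphism of $A$-bimodules we have
\[
  d_1(\phi_1(r_1,p))=\pi(a)\bigl(\phi_0(s)-\phi_0(\beta(s))\bigr)\pi(c).
\]
Condition $(\lozenge)$ ensures $f$ is irreducible, and therefore $\beta(s)=i(\pi(s))=i(\pi(f))=f$, while $\pi(s)=\pi(f)$ gives $\pi(sc)=\pi(fc)$ and $\pi(as)=\pi(af)$. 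Applying Lemma \ref{lemmaphi0_producto} to both $asc$ and $afc$, the outer terms cancel pairwise and what remains is precisely $\pi(a)(\phi_0(s)-\phi_0(f))\pi(c)$; hence $d_1(\phi_1(r_1,p))=\phi_0(asc)-\phi_0(afc)=\phi_0(p)-\phi_0(r_1(p))$.

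For the inductive step, write $r'=(r_n,\dots,r_2)$ and use the recursive definition
\[
  \phi_1(r,p)=\phi_1(r_1,p)+\phi_1(r',r_1(p)).
\]
Since $r_1(p)\in kQ$, the second summand is interpreted via the $k$-linear extension of $\phi_1(r',-)$. Applying $d_1$ and using the base case on the first summand together with the inductive hypothesis (applied linearly to the expansion of $r_1(p)$) on the second gives a telescoping cancellation:
\[
  d_1(\phi_1(r,p))=\bigl(\phi_0(p)-\phi_0(r_1(p))\bigr)+\bigl(\phi_0(r_1(p))-\phi_0(r'(r_1(p)))\bigr)=\phi_0(p)-\phi_0(r(p)),
\]
since $r(p)=r'(r_1(p))$ by definition.

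The main point requiring care is the interplay between the linearity of the maps and the fact that $r_1(p)$ is generally not a single path but a $k$-linear combination (because $f$ may be a sum of paths). This forces one to formulate and invoke the inductive hypothesis for $\phi_1(r',-)$ and $\phi_0$ as $k$-linear maps on $kQ$, not just on paths. Everything else is a routine application of $A$-bimodule linearity of $d_1$ together with Lemma \ref{lemmaphi0_producto}, and the crucial algebraic identity $\beta(s)=f$ that converts the defining formula $d_1(1\otimes s\otimes 1)=\phi_0(s)-\phi_0(\beta(s))$ into a statement involving the actual replacement performed by the basic reduction.
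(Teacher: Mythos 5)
Your proof is correct and follows essentially the same route as the paper's: induction on the length of the reduction, with the base case handled by Lemma \ref{lemmaphi0_producto} together with the identity $\beta(s)=f$, and the inductive step by the telescoping recursion $\phi_1(r,p)=\phi_1(r_1,p)+\phi_1(r',r_1(p))$. Your two explicit clarifications --- that the left-hand side of the stated equality should read $\phi_1(r,p)$ rather than $\phi_1(r_1,p)$, and that the inductive hypothesis must be invoked $k$-linearly since $r_1(p)$ is in general a linear combination of paths --- are both accurate and are left implicit in the paper.
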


\begin{proof}
 We will prove the result by induction on $n$. We will denote 
 $r_i=r_{a_i,s_i,c_i}$.
 
 For $n=1$, there are two cases. The first one is when
 $p\neq a_1s_1c_1$. In this situation, $r(p)=r_1(p)=p$, $\phi_1(r_1,p)=0$ and so 
 the equality is trivially true. In the second case, $p=a_1s_1c_1$, 
 $\phi_1(r_1,p)=\pi(a_1)\otimes s_1\otimes\pi(c_1)$ and 
 $r(p)=r_1(p)=a_1\beta(s_1)c_1$. Moreover,
 \[
 \begin{aligned}
  d_1(\phi_1(r_1,p)) + \phi_0(r_1(p)) &= d_1(\pi(a_1)\otimes s_1\otimes\pi(c_1))
    + \phi_0(a_1\beta(s_1)c_1)\\
  &=\pi(a_1)\phi_0(s_1)\pi(c_1) - \pi(a_1)\phi_0(\beta(s_1))\pi(c_1) 
    + \phi_0(a_1\beta(s_1)c_1).
 \end{aligned}
 \]
 Using Lemma \ref{lemmaphi0_producto}, the last term equals
 \[
  \phi_0(a_1)\pi(\beta(s_1)c_1) + \pi(a_1)\phi_0(\beta(s_1))\pi(c_1) 
    +\pi(a_1\beta(s_1))\phi_0(c_1),
 \]
 so the whole expression is
 \[
 \begin{aligned}
  &\pi(a_1)\phi_0(s_1)\pi(c_1) + \phi_0(a_1)\pi(\beta(s_1)c_1) 
  + \pi(a_1\beta(s_1))\phi_0(c_1)\\
  &=\pi(a_1)\phi_0(s_1)\pi(c_1)+\phi_0(a_1)\pi(s_1c_1)+\pi(a_1s_1)\phi_0(c_1),
 \end{aligned}
 \]
 and using again Lemma \ref{lemmaphi0_producto}, this equals 
$\phi_0(p)$.
 
 Suppose the result holds for $n-1$. As usual, we denote 
 $r'=(r_n,\dots,r_2)$. 
 
 Since $r(p) = r'(r_1(p))$,
 \[
  \begin{aligned}
   d_1(\phi_1(r,p))+\phi_0(r(p)) &= d_1(\phi_1(r_1,p)) + d_1(\phi_1(r',r_1(p)))
     +\phi_0(r'(r_1(p)))\\
   &=d_1(\phi_1(r_1,p)) + \phi_0(r_1(p))\\
   &=\phi_0(p).
  \end{aligned}
 \]
\end{proof}

 
 Consider now an element $p\in\A_2$. By definition we write $p=u_0u_1u_2=
 v_2v_1v_0$ where $u_0u_1$ and $v_1v_0$ are 
 paths in $\mathcal{A}_1$ dividing
 $p$. Suppose $r=r_{a,s,c}$ is a basic reduction such that $r(p)\neq p$. We
 deduce that either $s=u_0u_1$ or $s=v_1v_0$. For an arbitrary reduction 
 $r=(r_n,\dots,r_1)$, we will say that $r$ \textit{starts on the left of $p$}
 if $r_1=r_{a,s,c}$, $s=u_0u_1$ and $asc=p$, and we will say that $r$ 
 \textit{starts on the right of $p$} if $r_1=r_{a,s,c}$, $s=v_1v_0$ and 
 $asc=p$.
 
\begin{proposition}
\label{prop:principi_resolu}
 Let $\{r^p\}_{p\in\A_2}$ and 
 $\{t^p\}_{p\in\A_2}$ be two sets of reductions such that $r^p(p)$ and 
 $t^p(p)$ belong to $k\B$, $r^p$ starts on the left of $p$ and $t^p$ starts
 on the right of $p$.
 Consider 
 $d_2:A\otimes_Ek\A_2\otimes_EA\to A\otimes_Ek\A_1\otimes_EA$ the map of 
 $A$-bimodules defined by $d_2(1\otimes p\otimes1)=\phi_1(t^p,p)
 -\phi_1(r^p,p)$.
 
 The sequence
 {\small
 \[
  \xymatrix{ A\otimes_Ek\A_2\otimes_EA\ar[r]^-{d_2}&
             A\otimes_Ek\A_1\otimes_EA\ar[r]^-{d_1}&
             A\otimes_Ek\A_0\otimes_EA\ar[r]^-{\delta_0}&
             A\otimes_EA\ar[r]^-{\delta_{-1}}&
             A\ar[r]&
             0}
 \]}
 is exact.
\end{proposition}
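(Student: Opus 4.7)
The plan is to apply Theorem~\ref{teo1} with $N=2$, taking $d_{-1}=\delta_{-1}$, $d_0=\delta_0$, and $d_1,d_2$ as defined in the statement. It then suffices to verify the two hypotheses of that theorem for $i=1,2$: that the composites $d_{i-1}\circ d_i$ vanish, and that each difference $(d_i-\delta_i)(1\otimes q\otimes 1)$ belongs to $\llrr{\overline{\LL}_{i-1}^\prec(q)}_k$ for every $q\in\A_i$.

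The vanishing $d_0\circ d_1=\delta_0\circ d_1=0$ is precisely Lemma~\ref{lemma:delta0d1igual0}. For $d_1\circ d_2$, fix $p\in\A_2$ and apply Lemma~\ref{lemma:d_1phi_1phi_0} to both $r^p$ and $t^p$:
\[
 d_1\circ d_2(1\otimes p\otimes 1)=\bigl(\phi_0(p)-\phi_0(t^p(p))\bigr)-\bigl(\phi_0(p)-\phi_0(r^p(p))\bigr)=\phi_0(r^p(p))-\phi_0(t^p(p)).
\]
Reduction-uniqueness, guaranteed by condition $(\lozenge)$, forces $r^p(p)=t^p(p)=\beta(p)$, so the expression vanishes.

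For the order condition at $i=1$, expanding the odd-index formula gives $\delta_1(1\otimes s\otimes 1)=\phi_0(s)$, hence $(d_1-\delta_1)(1\otimes s\otimes 1)=-\phi_0(\beta(s))$. Since $s\in S$ one has $\beta(s)\prec s$, and $\B$ is closed under divisors by Lemma~\ref{lemma:bases}, so writing $\beta(s)=\sum_i\lambda_ib_i$ with $b_i\in\B$ places every summand of $\phi_0(\beta(s))$ in $\overline{\LL}_0^\prec(s)$. For $i=2$, write $p=u_0u_1u_2=v_2v_1v_0\in\A_2$; the hypothesis that $r^p$ starts on the left and $t^p$ starts on the right means their first basic reductions are $r_1=r_{1,u_0u_1,u_2}$ and $t_1=r_{v_2,v_1v_0,1}$. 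Setting $(r^p)'=(r_n,\dots,r_2)$ and $(t^p)'=(t_m,\dots,t_2)$, the recursion defining $\phi_1$ yields
\[
 \phi_1(r^p,p)=1\otimes u_0u_1\otimes\pi(u_2)+\phi_1((r^p)',r_1(p)),\qquad
 \phi_1(t^p,p)=\pi(v_2)\otimes v_1v_0\otimes 1+\phi_1((t^p)',t_1(p)),
\]
which combined with the even-index formula $\delta_2(1\otimes p\otimes 1)=\pi(v_2)\otimes v_1v_0\otimes 1-1\otimes u_0u_1\otimes\pi(u_2)$ gives $(d_2-\delta_2)(1\otimes p\otimes 1)=\phi_1((t^p)',t_1(p))-\phi_1((r^p)',r_1(p))$. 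For each $s\in S$ the paired $f$ is irreducible and distinct from $s$, forcing each summand of $f$ to be strictly $\prec s$; compatibility of $\prec$ with concatenation then gives $r_1(p)\prec p$ and $t_1(p)\prec p$ termwise, so Lemma~\ref{lemma:phi_1menores} places both tails in $\llrr{\overline{\LL}_1^\prec(p)}_\ZZ\subseteq\llrr{\overline{\LL}_1^\prec(p)}_k$.

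The main obstacle I anticipate is the algebraic bookkeeping at $i=2$: one must peel the first basic reduction off each of $r^p$ and $t^p$ in the recursive definition of $\phi_1$, recognise the two extracted summands as precisely the two terms of $\delta_2(1\otimes p\otimes 1)$, and verify that every path in the support of $r_1(p)$ and $t_1(p)$ lies strictly below $p$ in $\prec$ so that Lemma~\ref{lemma:phi_1menores} applies to the tails. Once this bookkeeping is in place, all hypotheses of Theorem~\ref{teo1} are satisfied and the exactness of the displayed sequence follows directly.
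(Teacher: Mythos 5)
Your proposal is correct and follows essentially the same route as the paper's own proof: it invokes Theorem~\ref{teo1} with $N=2$, establishes $\delta_0\circ d_1=0$ via Lemma~\ref{lemma:delta0d1igual0}, gets $d_1\circ d_2=0$ from Lemma~\ref{lemma:d_1phi_1phi_0} together with reduction-uniqueness, and verifies the order conditions by computing $(d_1-\delta_1)(1\otimes s\otimes 1)=-\phi_0(\beta(s))$ and by peeling the first basic reduction off $r^p$ and $t^p$ so that Lemma~\ref{lemma:phi_1menores} controls the tails. The only (inessential) omission relative to the paper is the short verification that $d_2$ is well defined on the tensor product over $E$.
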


\begin{proof}
 To check that $d_2$ is well defined, consider the map $\tilde d_2:A\times k\A_2\times A \to A\otimes_Ek\A_1\otimes_EA$ defined by
 $\tilde d_2(x,p,y)=x\phi_1(t^p,p)y-x\phi_1(r^p,p)y$, for all $x,y\in A$, which is clearly multilinear; taking
 into account the definition of $\phi_1$, it is such that 
 $\tilde d_2(xe,p,y)= \tilde d_2(x,ep,y)$ and $\tilde d_2(x,pe,y)=\tilde d_2(x,p,ey)$ for all $e\in E$, so it induces $d_2$ on $A\otimes_Ek\A_2\otimes_EA$.
 
 The sequence is a complex:
 \begin{itemize}
  \item $\delta_{-1}\circ\delta_0=0$ and $\delta_0\circ d_1=0$ follow from
   Lemma \ref{lemma:delta0d1igual0}.
  \item Given $p\in\A_2$, $d_1(d_2(1\otimes p\otimes 1)) = 
    d_1(\phi_1(t^p,p)-\phi_1(r^p,p))$. Using Lemma \ref{lemma:d_1phi_1phi_0},
    this last expression equals $\phi_0(p)-\phi_0(t^p(p)) 
    - \phi_0(p) +\phi_0(r^p(p))$, which is, by Remark 
    \ref{remark:beta}, equal to $-\phi_0(\beta(p))+\phi_0(\beta(p))$,
    so $d_1\circ d_2=0$.
 \end{itemize}
 
 It is exact:
 \begin{itemize}
  \item We already know that this is true at $A$ and at $A\otimes_EA$.
  \item Given $s\in\A_1$, $d_1(1\otimes s\otimes 1)
  -\delta_1(1\otimes s\otimes 1)$ belongs to $\llrr{\overline\LL_0^\prec(s)}_k$:
  indeed, notice  that $\delta_1(1\otimes s\otimes1)=\phi_0(s)$, and 
  $\phi_0(\beta(s))$ belongs to $\llrr{\overline\LL_0^\prec(s)}_k$ since
  $\beta(s)\prec s$. It follows that 
  \[
   d_1(1\otimes s\otimes 1)-\delta_1(1\otimes s\otimes 1)=-\phi_0(\beta(s))
   \in\llrr{\overline\LL_0^\prec(s)}_k.
  \]
  \item Given $p\in\A_2$, we will now prove that 
  $(d_2-\delta_2)(1\otimes p\otimes 1)$ belongs to 
  $\llrr{\overline\LL_1^\prec(p)}_k$. We may write $p=u_0u_1u_2=v_2v_1v_0$, as
  we did just before this proposition and thus  
  $\delta_2(1\otimes p\otimes 1)=\pi(v_2)\otimes v_1v_0\otimes 1 
  - 1\otimes u_0u_1\otimes\pi(u_2)$. Besides, if $r^p=(r_n,\dots,r_1)$ and
  $t^p=(t_m,\dots,t_1)$ with $t_i$ and $r_j$ basic reductions, the fact that
  $r^p$ starts on the left and $t^p$ starts on the right of $p$ gives
  \[
   (d_2-\delta_2)(1\otimes p\otimes 1)=\phi_1(t'^p,t_1(p))-\phi_1(r'^p,r_1(p)),
  \]
  where $t'^p=(t_m,\dots,t_2)$ and $r'^p=(r_n,\dots,r_2)$.
  Since $t_1(p)\prec p$ and $r_1(p)\prec p$, Lemma \ref{lemma:phi_1menores}
  allows us to deduce the result.
 \end{itemize}
Finally, Theorem \ref{teo1} implies that the sequence considered is exact.
\end{proof}

\begin{remark}
 Given $a\in\A_0=Q_1$, we have that 
$\overline\LL_{-1}^\prec(a)=\emptyset$, so for any 
 morphism of $A$-bimodules ${d:A\otimes_Ek\A_0\otimes_EA\to 
 A\otimes_Ek\A_{-1}\otimes_EA}$ such that $(d-\delta_0)(1\otimes a\otimes 
1)$
 belongs to $\llrr{\overline\LL_{-1}^\prec(a)}_k$, it must be $d=\delta_0$.
 
 On the other hand, given $s\in\A_1$, write 
$\beta(s)=\sum_{i=1}^m\lambda_ib_i$. Let
 $r=r_{a,s',c}$ be a basic reduction such that $r(s)\neq s$. We must have 
 $s'=s$ and $a,c\in Q_0$ must coincide with the source and target of $s$, 
 respectively. In other words, the only basic reduction such that $r(s)\neq s$
 is $r_{a,s,c}$ with $a$ and $c$ as we just said, and in this case 
 $r(s)=\beta(s)\in k\B$.
 
 In this situation
 \[
  \{\lambda q\in k^\times Q_{\geq0}: \lambda q\prec s\} 
  = \{\lambda_1b_1,\dots,\lambda_mb_m\},
 \]
 and writing $b_i=b_i^{n_i}\cdots b_i^1$ with $b_i^j\in Q_1$,
 \[
  \overline\LL_0^\prec(s)
  =\bigcup_{i=1}^N\{\lambda_i\pi(b_i^{n_i}\cdots b_i^2)\otimes b_i^1\otimes 
1,
                    \dots,
              \lambda_i\otimes b_i^{n_i}\otimes\pi(b_i^{n_i-1}\cdots b_i^1)\}.
 \]
 If $d:A\otimes_Ek\A_1\otimes_EA\to A\otimes_Ek\A_0\otimes_EA$ verifies
 $(d-\delta_1)(1\otimes s\otimes 1)\in\overline\LL_0^\prec(s)$ and 
 $\delta_0\circ d(s)=0$ for all $s\in \A_1$, then there exists
 $\gamma_i^j\in k$ such that
 \[
  d(1\otimes s\otimes 1)
  =\phi_0(s)-\sum_{i=1}^m\sum_{j=1}^{n_i}
  \gamma_i^j\lambda_i\pi(b_i^{n_i}\cdots b_i^{j+1})\otimes b_i^j\otimes\pi(b_i^{j-1}\cdots b_i^1).
 \]
 From this, applying $\delta_0$ and reordering terms we can deduce that 
 $\gamma_i^j=1$ for all $i,j$. We conclude that the unique morphism with 
 the desired properties is $d_1$.
\end{remark}


\section{Examples}
\label{s:Examples}

In this section we construct explicitly projective 
bimodule resolutions of some algebras using the methods we developed in previous sections.

Given an algebra $A=kQ/I$, we proved in Lemmas \ref{lemma:existencia-sr1} and \ref{lemma:existencia-sr2} that it is
always possible to construct a reduction system $\mathcal R$ such that every
path is reduction-unique. However, it is  not always easy to follow the 
prescriptions given by these lemmas for a concrete algebra. Moreover, the reduction system obtained from a $deglex$ order $\le_{\omega}$ may be sometimes less convenient than other ones.
In fact, describing the set $\mathsf{tip}(I)$ is not in general an easy task.

Bergman's Diamond Lemma is the tool we use to effectively compute a reduction
system in most cases. Next we sketch this procedure, which is also described in 
\cite{B}, Section $5$.

The two sided ideal $I$ is usually presented giving a set $\{x_i\}_{i\in\Gamma}\subseteq kQ$ of generating relations. 
If we fix a well-order on 
$Q_0\cup Q_1$, a function $\omega:Q_1\to\NN$ and consider the total order $\leq_\omega$
on $Q_{\geq0}$, we can easily write $x_i = s_i-f_i$, and we can eventually rescale $x_i$ so that $s_i$ is monic, with 
$s_i>_{\omega} f_i$ 
for all $i$ and define the reduction system 
$\mathcal R=\{(s_i,f_i)\}_{i\in\Gamma}$. Every path $p$ will be reduction-finite
with respect to $\mathcal R$. Bergman's Diamond Lemma says that every path is
reduction-unique if and only if 
for every path $p\in\A_2$ there are reductions $r,t$ with $r$ starting on the left and $t$ starting on the right of $p$ such that $r(p)=t(p)$. 
This last situation is described by saying that $p$ is \textit{resolvable}.
The set $\A_2$ is usually finite and so there is a finite number of conditions
to check. 

In case there exists a non resolvable ambiguity $p\in\A_2$, choose any two reductions $r,t$ starting on the left and on the right respectively 
with $r(p)$ and $t(p)$ both
irreducible. The element $r(p)-t(p)$ belongs to $I\setminus\{0\}$. We can write $r(p)-t(p) = s-f$ 
with 
$f<_\omega s$ 
and add the element $(s,f)$ to our reduction system,
and so $p$ is now resolvable. New ambiguities may now appear, so it is necessary to iterate this process, which may have infinitely many steps, but we will arrive to a 
reduction system $\mathcal R$ satisfying condition ($\lozenge$).

Next we give an example to illustrate this procedure, which will be also useful to exhibit a case where another 
reduction system found in an alternative way is better that the prescribed one.

\begin{example}
\label{ej:cubica}
Consider 
the algebra of Example \ref{example:otroorden}. Let $x<y<z$ and 
$\omega(x)=\omega(y)=\omega(z)=1$.
The ideal $I$ is presented as the two sided ideal generated by the element $x^3 + y^3 + z^3 - xyz$. We see that
$z^3 = \mathsf{tip}(z^3 - (xyz -x^3 - y^3))$, so we start considering
the reduction system $\mathcal R = \{(z^3,xyz - x^3 - y^3)\}$. Notice 
that $\A_2=\{z^4\}$. If we apply the reduction $r_{z,z^3,1}$ to $z^4$ we obtain
$zxyz - zx^3 - zy^3$ which is irreducible. On the other hand, if we apply
the reduction $r_{1,z^3,z}$ to $z^4$ we obtain $xyz^2 - x^3z - y^3z$ which is also irreducible and different from the first one. The difference between 
them is $xyz^2-x^3z-y^3z-zxyz+zx^3+zy^3$, so we add $(xyz^2, x^3z+y^3z +zxyz-zx^3-zy^3)$ to the reduction system $\mathcal R$. Notice that now the set $\A_2$ is 
$\{z^4, xyz^3\}$. Applying reductions on the left and on the right to the element
$xyz^3$ we obtain again two different irreducible elements and, proceeding as
before, we see that we have to add the element $(y^3z^2, -x^3z^2-z^2xyz+z^2x^3+z^2y^3+xyxyz-xyx^3-xy^4)$ to our reduction system $\mathcal R$. We obtain the new ambiguity $y^3z^3$ which is not difficult to see that it is
resolvable. Thus, the reduction system
\[
\begin{aligned}
\mathcal R_1=\{&(z^3,xyz - x^3 - y^3),(xyz^2, x^3z+y^3z +zxyz-zx^3-zy^3),\\
&(y^3z^2, -x^3z^2-z^2xyz+z^2x^3+z^2y^3+xyxyz-xyx^3-xy^4)\},
\end{aligned}
\]
satisfies condition $(\lozenge)$.

There is another
reduction system for this algebra, namely  $\mathcal R_2 = \{(xyz,x^3+y^3+z^3)\}$. 
Let us denote $\A_n^1$ and $\A_n^2$ the respective set of $n$-ambiguities. 
Notice that $z^{\frac{3}{2}(n+1)}\in \A_n^1$ for $n$ odd and $z^{\frac{3}{2}n+1}\in\A_n^1$ for $n$ even, 
so $\A_n^1$ is not empty for all $n\in\NN$. On the other hand, $\A_n^2$ is empty for all $n\geq2$. 
We conclude that using $\mathcal R_2$ we will obtain a resolution
of length $2$, with differentials given explicitely by Proposition 
\ref{prop:principi_resolu}, and using $\mathcal R_1$ the resolution 
obtained will have infinite length. 
This shows how different can the resolutions from different reduction 
systems be.

Notice that $\mathcal R_2$ cannot be obtained
by the procedure described above by any choice of order on $Q_0\cup Q_1$ 
and weight $\omega$.
The algebra $A=k<x,y,z>/(xyz-x^3-y^3-z^3)$ is in fact a $3$-Koszul 
algebra. Indeed, denoting by $V$ the $k$-vector space spanned by $x,y,z$ 
and by $R$ the one dimensional $k$-vector space spanned by the relation 
$xyz-x^3-y^3-z^3$, it is straightforward that 
\[
 R\otimes V\otimes V\cap V\otimes V\otimes R = \{0\},
\]
and so the intersection is a subset of $V\otimes R\otimes V$. Theorem 2.5 
of \cite{Be1} guarantees that $A$ is $3$-Koszul.

The resolution we obtain from the reduction system $\mathcal R_2$ is the 
Koszul resolution, since it is minimal, see Theorem \ref{teo3}. As we shall see, this is a 
particular case of a general situation.
\end{example}

\subsection{The algebra counterexample to Happel's question}

 Let $\xi$ be an element of the field $k$ and let $A$ be the $k$-algebra with generators
 $x$ and $y$, subject to the relations $x^2=0=y^2$, $yx=\xi xy$.
 Choose the order $x<y$ with weights $\omega(x)=\omega(y)=1$ and fix
 the reduction system $\mathcal R=\{(x^2,0),(y^2,0),(yx,\xi xy)\}$.
 The set $\B$ of irreducible paths is thus $\{1,x,y,xy\}$. It is easy
 to verify that $\A_2=\{x^3,yx^2,y^2x,y^3\}$ and that all paths in $\A_2$ are 
 reduction-unique. Bergman's Diamond Lemma guarantees that $\mathcal R$ 
 satisfies $(\lozenge)$.
 
 The only path of length $2$ not in $S$ is $xy$; Proposition
\ref{cuadratico}
 implies that 
 for each $n$, $\A_n$ is the set of paths of lenght $n+1$ not divisible by 
 $xy$,
 \[
  \A_n=\{y^sx^t:s+t=n+1\}.
 \]
 
\begin{lemma}
 The following complex provides the beginning of an $A$-bimodule projective resolution
 of the algebra $A$
 {\small
 \[
  \xymatrix{A\otimes_Ek\A_2\otimes_EA\ar[r]^-{d_2}&
            A\otimes_Ek\A_1\otimes_EA\ar[r]^-{d_1}&
            A\otimes_Ek\A_0\otimes_EA\ar[r]^-{\delta_0}&
            A\otimes_EA\ar[r]^-{\delta_{-1}}&
            A\ar[r]&
            0}
 \]}
 where $d_1$ is the $A$-bimodule map such that
 {\small
 \[
  \begin{aligned}
   &d_1(1\otimes x^2\otimes 1) = x\otimes x\otimes 1+1\otimes x\otimes x,\\
   &d_1(1\otimes y^2\otimes 1) = y\otimes y\otimes 1+1\otimes y\otimes y,\\
   &d_1(1\otimes yx\otimes 1) = y\otimes x\otimes 1 + 1\otimes y\otimes x
      -\xi x\otimes y\otimes 1 - \xi\otimes x\otimes y
  \end{aligned}
 \]}
 and $d_2$ is the $A$-bimodule morphism such that
 {\small
 \[
 \begin{aligned}
  &d_2(1\otimes y^3\otimes 1) = y\otimes y^2\otimes 1 - 1\otimes y^2\otimes y,\\
  &d_2(1\otimes y^2x\otimes 1) = y\otimes yx\otimes 1 +\xi\otimes yx\otimes y
      +\xi^2x\otimes y^2\otimes 1 - 1\otimes y^2\otimes x,\\
  &d_2(1\otimes yx^2\otimes 1) = y\otimes x^2\otimes 1 - 1\otimes yx\otimes x
      -\xi x\otimes yx\otimes 1 - \xi^2\otimes x^2\otimes y\\
  &d_2(1\otimes x^3\otimes 1) = x\otimes x^2\otimes 1-1\otimes x^2\otimes x.
 \end{aligned}
 \]}
\end{lemma}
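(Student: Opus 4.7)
The plan is to verify the two hypotheses of Theorem \ref{teo1} with $N=2$, namely that $d_i\circ d_{i-1}=0$ and that $(d_i-\delta_i)(1\otimes q\otimes 1)\in\langle\overline{\LL}_{i-1}^\prec(q)\rangle_k$ for $i=1,2$ and every $q\in\A_i$. Once these hold the claimed complex is exact by Theorem \ref{teo1}. The monomial algebra $A_S$ here is the skew-polynomial quotient $k\langle x,y\rangle/(x^2,y^2,yx)$, and Proposition \ref{cuadratico} identifies $\A_2=\{x^3,yx^2,y^2x,y^3\}$.

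First, I would compute the Bardzell differentials $\delta_1,\delta_2$ explicitly on each $q\in\A_1\cup\A_2$ from their definitions. For $\A_1=S$ this gives
\[
\delta_1(1\otimes x^2\otimes 1)=x\otimes x\otimes 1+1\otimes x\otimes x,\quad \delta_1(1\otimes y^2\otimes 1)=y\otimes y\otimes 1+1\otimes y\otimes y,
\]
\[
\delta_1(1\otimes yx\otimes 1)=y\otimes x\otimes 1+1\otimes y\otimes x,
\]
and for $q\in\A_2$ with the unique decompositions $q=u_0u_1u_2=v_2v_1v_0$ furnished by Proposition \ref{cuadratico},
\[
\delta_2(1\otimes q\otimes 1)=v_2\otimes v_1v_0\otimes 1-1\otimes u_0u_1\otimes u_2.
\]
Comparing with the given formulas, all the differences $d_i(1\otimes q\otimes 1)-\delta_i(1\otimes q\otimes 1)$ vanish except in the cases $q=yx,y^2x,yx^2$. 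For these the extra terms are, respectively, $-\xi x\otimes y\otimes 1-\xi\otimes x\otimes y$ (for $q=yx$), $\xi\otimes yx\otimes y+\xi^2x\otimes y^2\otimes 1$ (for $q=y^2x$), and $-\xi x\otimes yx\otimes 1-\xi^2\otimes x^2\otimes y$ (for $q=yx^2$).

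Next I would check that each extra term lies in the required $\langle\overline{\LL}_{i-1}^\prec(q)\rangle_k$. This amounts to exhibiting explicit basic reductions that turn $q$ into a scalar multiple of the path whose tensor factors appear. For $yx$, the single reduction $r_{1,yx,1}$ yields $\xi xy$, so $\xi xy\prec yx$ and both extra tensors on the path $xy$ lie in $\overline{\LL}_0^\prec(yx)$. For $y^2x$, applying $r_{y,yx,1}$ gives $\xi yxy$ and then $r_{1,yx,y}$ gives $\xi^2xy^2$, so $\xi yxy,\xi^2xy^2\prec y^2x$ and the two extra tensors lie in $\overline{\LL}_1^\prec(y^2x)$. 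The case $yx^2$ is symmetric, using $r_{1,yx,x}$ and $r_{x,yx,1}$.

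Finally I would verify by direct (but short) $A$-bilinear calculations that $\delta_0\circ d_1=0$ and $d_1\circ d_2=0$. For $\delta_0\circ d_1$, the cases $s=x^2,y^2$ collapse because $x^2=y^2=0$ in $A$, and the case $s=yx$ uses $yx=\xi xy$ in $A$ to cancel the four resulting terms. For $d_1\circ d_2$, I would evaluate each $d_2(1\otimes q\otimes 1)$ as a sum of elementary tensors, apply $d_1$ termwise using $A$-bimodule linearity, and then cancel using the defining relations $x^2=y^2=0$ and $yx=\xi xy$ in $A$; for $q=x^3,y^3$ the cancellation is immediate, and for $q=y^2x,yx^2$ (which produce a priori twelve terms after expansion) the pairs cancel cleanly once the scalar $\xi$ is tracked through each occurrence of $yx$. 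The main bookkeeping obstacle is this last case: keeping the signs and powers of $\xi$ straight when rewriting each $yx$-factor that arises from the multiplication in $A$. Once both equalities are confirmed, Theorem \ref{teo1} applied with $N=2$ yields exactness of the displayed complex.
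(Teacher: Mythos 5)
Your proposal is correct, but it takes a more computational route than the paper. The paper's proof is a one-line application of Proposition \ref{prop:principi_resolu}: one only exhibits, for each $p\in\A_2$, a reduction $r^p$ starting on the left and a reduction $t^p$ starting on the right bringing $p$ to normal form (e.g.\ $r^{y^2x}=r_{1,y^2,x}$ and $t^{y^2x}=(r_{x,y^2,1},r_{1,yx,y},r_{y,yx,1})$), and the proposition then produces $d_2(1\otimes p\otimes 1)=\phi_1(t^p,p)-\phi_1(r^p,p)$ and guarantees exactness; in particular $d_1\circ d_2=0$ comes for free from Lemma \ref{lemma:d_1phi_1phi_0}, which gives $d_1(\phi_1(r,p))=\phi_0(p)-\phi_0(r(p))$, so that $d_1d_2(1\otimes p\otimes 1)=\phi_0(r^p(p))-\phi_0(t^p(p))=\phi_0(\beta(p))-\phi_0(\beta(p))=0$ by reduction-uniqueness. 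You instead verify the hypotheses of Theorem \ref{teo1} with $N=2$ directly for the explicitly given maps: you compute $\delta_1,\delta_2$, isolate the correction terms, trace each one to an explicit chain of basic reductions to place it in the appropriate $\langle\overline{\LL}_{i-1}^\prec(q)\rangle_k$ (your reductions for $yx$, $y^2x$, $yx^2$ are exactly the right ones), and check $\delta_0\circ d_1=0$ and $d_1\circ d_2=0$ by expansion using $x^2=y^2=0$ and $yx=\xi xy$. Both arguments ultimately rest on Theorem \ref{teo1}; what the paper's packaging buys is the elimination of the twelve-term cancellation for $q=y^2x$ and $q=yx^2$, which is the only genuinely laborious step in your version, while your version has the advantage of not requiring one to recognize the displayed $d_2$ as being of the form $\phi_1(t^p,p)-\phi_1(r^p,p)$.
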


\begin{proof}
 We apply Proposition \ref{prop:principi_resolu} to the following sets
 $\{r^p\}_{p\in\A_2}$ of left reductions and $\{t^p\}_{p\in\A_2}$ of right
 reductions, where
 \[
  \begin{aligned}
   &r^{y^3} = r_{1,y^2,y}, 
     & &r^{y^2x}=r_{1,y^2,x},
     & & &t^{y^3} = r_{y,y^2,1},
     & & &t^{y^2x} = (r_{x,y^2,1},r_{1,yx,y},r_{y,yx,1}),\\
   &r^{yx^2}=(r_{1,x^2,y},r_{x,yx,1},r_{1,yx,x}),
     & &r^{x^3}=r_{1,x^2,x},
     & & &t^{yx^2} = r_{y,x^2,1},
     & & &t^{x^3} = r_{x,x^2,1}.
  \end{aligned}
 \]
\end{proof}
 One can find an $A$-bimodule resolution of $A$  in \cite{BGMS} 
 and in \cite{BE}; the authors also compute the Hochschild cohomology of $A$ therein. We recover this resolution
 with our method.
 
 Given $q\in\A_n$, there are $s,t\in\NN$ such that $s+t=n+1$ and $q=y^sx^t$.
 Suppose $q=apc$ with $p=y^{s'}x^{t'}\in\A_{n-1}$ and $a,c\in Q_{\geq0}$. 
 Since $s+t=n+1$ and $s'+t'=n$, either $a$
 belongs to $Q_0$ and $c=x$ or $a=y$ and $c\in Q_0$. As a consequence of this
 fact, the maps 
 $\delta_n:kQ\otimes_Ek\A_n\otimes_EkQ\to kQ\otimes_Ek\A_{n-1}\otimes_EA$ are
 \[
  \delta_n(1\otimes y^sx^t\otimes 1) = 
     \begin{dcases*}
        y\otimes y^{s-1}x^t\otimes 1+(-1)^{n+1}\otimes y^sx^{t-1}\otimes x, & 
           if $s\neq0$ and $t\neq0$,\\
        y\otimes y^n\otimes 1 + (-1)^{n+1}\otimes y^n\otimes y, &
           if $t=0$,\\
        x\otimes x^n\otimes 1 + (-1)^{n+1}\otimes x^n\otimes x, &
           if $s=0$,
     \end{dcases*}
 \]
 Moreover, given a basic reduction $r=r_{a,s,c}$, the fact that $s$ belongs to
 $S=\{x^2,y^2,yx\}$ implies that $r(y^sx^t)$ is either $0$ or 
 $\xi y^{s-1}xyx^{t-1}$. Considering 
 the reduction system 
 $\mathcal R$,
 if $s\neq0$ and $t\neq0$, then
 \[
  \overline\LL_{n-1}^\prec(y^sx^t)=
  \{\xi^sx\otimes y^sx^{t-1}\otimes 1, \xi^t\otimes y^{s-1}x^t\otimes y\}.
 \]
 In case $s=0$ or $t=0$, the set $\overline\LL_{n-1}^\prec(y^sx^t)$ is empty.
 
 \medskip
 
 The computation of $d_2-\delta_2$ suggests the definition of the maps
 \[
 d_n:A\otimes_Ek\A_n\otimes_EA\to A\otimes_Ek\A_{n-1}\otimes_EA
 \]
  as follows
 \[
  d_n(1\otimes y^sx^t\otimes 1) = \delta_n(1\otimes y^sx^t\otimes 1) + 
                                  \epsilon(\xi^sx\otimes y^sx^{t-1}\otimes 1
                                  +\xi^t\otimes y^{s-1}x^t\otimes y)
 \]
 where $\epsilon$ denotes a sign depending on $s,t,n$. The equality 
 $d_{n-1}\circ d_n=0$ shows that making the choice
 $\epsilon = (-1)^s$ does the job.
 
 Finally, Theorem \ref{teo1} shows that the complex
{\small
 \[
  \xymatrix{\cdots\ar[r] &
            A\otimes_Ek\A_n\otimes_EA\ar[r]^-{d_n}&
            \cdots\ar[r]^-{d_1}&
            A\otimes_Ek\A_0\otimes_EA\ar[r]^-{d_0}&
            A\otimes_EA\ar[r]^-{d_{-1}}&
            A\ar[r]&
            0}
 \]}
 with
 {\small
 \[
  d_n(1\otimes y^sx^t\otimes 1) = y\otimes y^{s-1}x^t\otimes 1 + 
  (-1)^{n+1}1\otimes y^sx^{t-1}\otimes x 
  + (-1)^s\xi^sx\otimes y^sx^{t-1}\otimes 1 + 
  (-1)^s\xi^t\otimes y^{s-1}x^t\otimes y,
 \]}
 for $s>0$ and $t>0$,
 and
 {\small\[
  \begin{aligned}
   &d_n(1\otimes y^{n+1}\otimes1) = y\otimes y^n\otimes 1 + (-1)^{n+1}1\otimes y^n\otimes y,\\
   &d_n(1\otimes x^{n+1}\otimes1) = x\otimes x^n\otimes 1 + (-1)^{n+1}1\otimes x^n\otimes x,
  \end{aligned}
 \]}
 is a projective bimodule resolution of $A$.
 
 Again, the algebra $A$ is Koszul, see for example \cite{Be2} and the 
resolution obtained using our procedure is the Koszul resolution, which is 
the minimal one, see Theorem \ref{teo3}.
 
\subsection{Quantum complete intersections}
\label{ej:qci}

 These algebras generalize the previous case. Instead of the relations 
 $x^2=0=y^2$, $yx=\xi xy$, we have $x^n=0=y^m$, $yx = \xi xy$, where $n$ and
 $m$ are fixed positive integers, $n,m > 1$.

 We still denote the algebra by $A$.
 Consider the order $x<y$ with weights $\omega(x)=\omega(y)=1$. The 
 set of $2$-ambiguities associated to the reduction system $\mathcal R=\{(x^n,0),(y^m,0),(yx,\xi xy)\}$ is 
 $\A_2=\{y^{m+1},y^mx,yx^n,x^{n+1}\}$, and the set of irreducible paths is 
 $\B=\{x^iy^j\in k\llrr{x,y} : 0\leq i\leq n-1, 0\leq j\leq m-1\}$. We easily check that every path in
 $\A_2$ is reduction-unique and using Bergman's Diamond Lemma, we conclude that $\mathcal R$ satisfies $(\lozenge)$, 
 Also,
 $\A_1=S=\{y^m,yx,x^n\}$
 and
 $\A_3=\{y^{2m},y^{m+1}x,y^mx^n,yx^{n+1},x^{2n}\}$.

 Denote by $\varphi:\NN_0^2\to\NN_0$ the map
 \[
  \varphi(s,n) =
  \begin{dcases*}
   \frac{s}{2}n & if $s$ is even,\\
   \frac{s-1}{2}n + 1 & if $s$ is odd.
  \end{dcases*}
 \]
 Given $N\in\NN$, the set of $N$-ambiguities is
 $\A_N=\{y^{\varphi(s,m)}x^{\varphi(t,n)}:s+t=N+1\}$. We will sometimes write
$(s,t)$ instead of $y^{\varphi(s,m)}x^{\varphi(t,n)} \in \A_N$.
 
 We first compute the beginning of the resolution.
 
\begin{lemma}
 The following complex provides the beginning of a projective resolution of
 $A$ as $A$-bimodule:
 {\small
 \[
  \xymatrix{A\otimes_Ek\A_2\otimes_EA\ar[r]^-{d_2}&
            A\otimes_Ek\A_1\otimes_EA\ar[r]^-{d_1}&
            A\otimes_Ek\A_0\otimes_EA\ar[r]^-{\delta_0}&
            A\otimes_EA\ar[r]^-{\delta_{-1}}&
            A\ar[r]&
            0}
 \]}
 where $d_1$ and $d_2$ are morphisms of $A$-bimodules given by the formulas
 {\small
 \[
  \begin{aligned}
    &d_1(1\otimes x^n\otimes 1) = \sum_{i=0}^{n-1}x^i\otimes x\otimes x^{n-1-i},\\
    &d_1(1\otimes y^m\otimes 1) = \sum_{i=0}^{m-1}y^i\otimes y\otimes y^{m-1-i},\\
    &d_1(1\otimes yx\otimes 1) = 1\otimes y\otimes x + y\otimes x\otimes 1 
        -\xi\otimes x\otimes y - \xi x\otimes y\otimes 1\\
    &d_2(1\otimes y^{m+1}\otimes 1)= y\otimes y^m\otimes 1 - 1\otimes y^m\otimes y, \\
    &d_2(1\otimes y^mx\otimes 1)=\sum_{i=0}^{m-1}\xi^iy^{m-1-i}\otimes yx\otimes y^i 
        + \xi^mx\otimes y^m\otimes 1 
        - 1\otimes y^m\otimes x\\
    &d_2(1\otimes yx^n\otimes 1) = y\otimes x^n\otimes 1 
         -\sum_{i=0}^{n-1}\xi^i x^i\otimes yx\otimes x^{n-1-i}
         -\xi^n\otimes x^n\otimes y,\\
    &d_2(1\otimes x^{n+1}\otimes 1)=x\otimes x^n\otimes 1 - 1\otimes x^n\otimes x.
  \end{aligned}
 \]}
\end{lemma}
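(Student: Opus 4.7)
The plan is to apply Proposition~\ref{prop:principi_resolu}, which reduces the task to exhibiting, for each $p \in \A_2 = \{y^{m+1},\, y^m x,\, yx^n,\, x^{n+1}\}$, a left reduction $r^p$ and a right reduction $t^p$ such that $r^p(p), t^p(p) \in k\B$, and then verifying that $\phi_1(t^p, p) - \phi_1(r^p, p)$ matches the stated formula for $d_2(1\otimes p\otimes 1)$. The formula for $d_1$ will follow directly from the definition $d_1(1\otimes s\otimes 1) = \phi_0(s) - \phi_0(\beta(s))$ of Section~\ref{s:Morphisms}, using that $\beta(x^n)=0$, $\beta(y^m)=0$, and $\beta(yx)=\xi xy$, together with the explicit formula for $\phi_0$.

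For the ``pure'' ambiguities $y^{m+1}$ and $x^{n+1}$ I will take $r^{y^{m+1}} = r_{1,y^m,y}$, $t^{y^{m+1}} = r_{y,y^m,1}$, and analogously for $x^{n+1}$; each of these single-step reductions sends the ambiguity to $0$, so $\phi_1$ produces only the two boundary terms displayed in the statement. For $p = y^m x$, the left reduction $r^p = r_{1,y^m,x}$ again kills $p$ in one step, contributing $1\otimes y^m\otimes x$. The right reduction, however, must begin with $r_{y^{m-1},yx,1}$, whose image $\xi y^{m-1}xy$ is not irreducible; I will then cascade through the sequence $r_{y^{m-i},\,yx,\,y^{i-1}}$ for $i=1,\dots,m$, reaching $\xi^m xy^m$, and finally apply $r_{x,y^m,1}$ to produce $0$. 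The case $yx^n$ is symmetric with the roles of $x$ and $y$, and of the left and right reductions, interchanged.

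The main bookkeeping point will be tracking the coefficients $\xi^i$ in the recursive unfolding of $\phi_1(t^{y^m x}, y^m x) = \phi_1(t_1, y^m x) + \phi_1(t', t_1(y^m x))$: after $i-1$ cascading steps the intermediate path has acquired a factor $\xi^{i-1}$, so by $k$-linearity of $\phi_1$ in its second argument, the $i$-th basic reduction contributes $\xi^{i-1}\,y^{m-i}\otimes yx\otimes y^{i-1}$. A short induction on $i$ will confirm this, and the final step $r_{x,y^m,1}$ applied to $\xi^m xy^m$ contributes $\xi^m x\otimes y^m\otimes 1$; reindexing $j=i-1$ yields the displayed sum $\sum_{i=0}^{m-1}\xi^i y^{m-1-i}\otimes yx\otimes y^i + \xi^m x\otimes y^m\otimes 1$. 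Subtracting $\phi_1(r^{y^m x}, y^m x) = 1\otimes y^m\otimes x$ gives exactly the formula claimed for $d_2(1\otimes y^m x\otimes 1)$.

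Once the four values of $d_2$ are verified in this way, exactness of the displayed sequence is immediate from Proposition~\ref{prop:principi_resolu}. The only genuine calculation is the cascade above, and its analogue for $yx^n$; everything else is a direct expansion of $\phi_0$ and $\phi_1$ from their recursive definitions.
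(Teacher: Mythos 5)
Your proposal is correct and follows exactly the paper's route: the paper's proof consists precisely of invoking Proposition \ref{prop:principi_resolu} with the same choices of left and right reductions you list (single-step reductions for $y^{m+1}$, $x^{n+1}$, a single-step left and cascading right reduction for $y^mx$, and the mirror image for $yx^n$), leaving the $\phi_0$/$\phi_1$ expansions as ``straightforward.'' Your cascade bookkeeping of the $\xi^i$ coefficients is accurate and simply fills in the details the paper omits.
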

 
\begin{proof}
 It is straightforward, using Proposition \ref{prop:principi_resolu} applied to
 the set $\{r^p\}_{p\in\A_2}$ of left reductions, where
 \[
  \begin{aligned}
   &r^{y^{m+1}}=r_{1,y^m,y}, & &r^{y^mx}=r_{1,y^m,x},\\
   &r^{yx^n}=(r_{1,x^n,y},\dots,r_{x,yx,x^{n-2}},r_{1,yx,x^{n-1}}) &
      &r^{x^{n+1}}=r_{1,x^n,x},
  \end{aligned}
 \]
 and the set $\{t^p\}_{p\in\A_2}$ of right reductions, where
 \[
  \begin{aligned}
   &t^{y^{m+1}}=r_{y,y^m,1}, & 
        &t^{y^mx}=(r_{x,y^m,1},\dots,r_{y^{m-2},yx,y},r_{y^{m-1},yx,1}),\\
   &y^{yx^n}=r_{y,x^n,1}, & &t^{x^{n+1}}=r_{x,x^n,1}.
  \end{aligned}
 \]
\end{proof}
 Of course we want to construct the rest of the resolution. Denote 
$(s,t)=y^{\varphi(s,m)}x^{\varphi(t,n)}\in\A_N$. We will 
first describe the set $\overline\LL_{N-1}^\prec(s,t)$. There are four 
cases, depending on the parity of $s,t$ and $N$. With this in view, it 
is useful to make some previous computations that we list below.
 \begin{enumerate}
  \item For $s$ even, for all $j$, $0\leq j\leq m-1$, 
    $y^{\varphi(s,m)}=y^{m-1-j}y^{\varphi(s-1,m)}y^j$.
  \item For $s$ odd, $y^{\varphi(s,m)}=yy^{\varphi(s-1,m)}=
        y^{\varphi(s-1,m)}y.$
  \item For $t$ even, for all $i$, $0\leq i\leq n-1$, $x^{\varphi(t,n)} =
        x^ix^{\varphi(t-1,n)}x^{n-i-1}$,
  \item For $t$ odd, $x^{\varphi(t,n)}=xx^{\varphi(t-1,n)}=x^{\varphi(t-1,n)}x$.
 \end{enumerate}

\smallskip

 \begin{description}
  \item[First case] $N$ even, $s$ even, $t$ odd,
  {\small
   \[
    \overline\LL_{N-1}^\prec(s,t) = \{\xi^{\varphi(t,n)j}y^{m-1-j}\otimes (s-1,t)\otimes y^j\}_{j=1}^{m-1}\cup\{\xi^{\varphi(s,m)}x\otimes(s,t-1)\otimes1\}.
   \]}
  \item[Second case] $N$ even, $s$ odd, $t$ even,
  {\small
  \[
   \overline\LL_{N-1}^\prec(s,t) = \{\xi^{\varphi(t,n)}\otimes(s-1,t)\otimes y\}\cup\{\xi^{\varphi(s,m)i}x^i\otimes(x,t-1)\otimes x^{n-1-i}\}_{i=1}^{n-1}.
  \]}
  \item[Third case] $N$ odd, $s$ even, $t$ even,
  {\small
  \[
   \overline\LL_{N-1}^\prec(s,t) = \{\xi^{\varphi(t,n)j}y^{m-1-j}\otimes(s-1,t)\otimes y^j\}_{j=1}^{m-1}\cup\{\xi^{\varphi(s,m)i}x^i\otimes(s,t-1)\otimes x^{n-1-i}\}_{i=1}^{n-1}.
  \]}
  \item[Fourth case] $N$, $s$ and $t$ odd,
  {\small
  \[
   \overline\LL_{N-1}^\prec(s,t)=\{\xi^{\varphi(t,n)}1\otimes(s-1,t)\otimes y,\xi^{\varphi(s,m)}x\otimes(s,t-1)\otimes 1\}.
  \]}
 \end{description} 
\begin{remark}
 We observe that, analogously to the case $n=m=2$,
 \[
  \begin{aligned}
   (d_1-\delta_1)(1\otimes(s,t)\otimes1) 
      = (-1)^s\sum_{u\in\overline\LL_0^\prec(s,t)}u,\\
   (d_2-\delta_2)(1\otimes(s,t)\otimes1)
      = (-1)^s\sum_{u\in\overline\LL_1^\prec(s,t)}u.
  \end{aligned}
 \]
 Proposition \ref{propfinal} for $R=\ZZ$ 
 guarantees that there exist $A$-bimodule maps 
 $d_N:A\otimes_Ek\A_N\otimes_EA\to A\otimes_Ek\A_{N-1}\otimes_EA$ such that
 $(d_N-\delta_N)(1\otimes(s,t)\otimes1)\in
   \llrr{\overline\LL_{N-1}^\prec(s,t)}_\ZZ$ and, most important, the complex
 $(A\otimes_Ek\A_\bullet\otimes_EA,d_\bullet)$ is a projective resolution of
 $A$ as $A$-bimodule.
 
 We are not yet able at this point to give the explicit formulas of the 
 differentials.
 
 In order to illustrate the situation, let us describe what happens for $N=3$.
 We know after the mentioned proposition that there exist $t_1,t_2\in\ZZ$ 
 such that
 \[
 \begin{aligned}
  d_3(1\otimes y^{m+1}x\otimes 1)&=d_3(1\otimes(3,1)\otimes 1)\\
  &=\delta_3(1\otimes(3,1)\otimes1)+t_1\xi\otimes(2,1)\otimes y + 
    t_2\xi^3x\otimes(3,0)\otimes1\\
  &=y\otimes y^mx\otimes 1 + 1\otimes y^{m+1}\otimes x + 
    t_1\xi\otimes y^mx\otimes y + t_2\xi^3 x\otimes y^{m+1}\otimes 1.
 \end{aligned}
 \]
 Of course, $d_2\circ d_3=0$. It follows from this equality 
 that $t_1=t_2=-1$. This example motivates the following lemma, stated in terms of the preceding notations.
\end{remark}

\begin{lemma}
 The $A$-bimodule morphisms $d_N:A\otimes_Ek\A_N\otimes_EA\to
 A\otimes_Ek\A_{N-1}\otimes_EA$ defined by the formula 
 \[
  d_N(1\otimes(s,t)\otimes1)=\delta_N(1\otimes(s,t)\otimes1) +
    (-1)^s\sum_{u\in\overline\LL_{N-1}^\prec(s,t)}u
 \]
 satisfy the hypotheses of Thm. \ref{teo1}.
\end{lemma}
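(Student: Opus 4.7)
My plan is to verify the two hypotheses of Theorem \ref{teo1} separately. The condition that $(d_N-\delta_N)(1\otimes(s,t)\otimes 1)\in\llrr{\overline{\LL}_{N-1}^\prec(s,t)}_k$ is immediate from the defining formula, since by construction the correction $(-1)^s\sum_{u\in\overline{\LL}_{N-1}^\prec(s,t)}u$ is a $\ZZ$-linear combination of elements of $\overline{\LL}_{N-1}^\prec(s,t)$.

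The substantive step is to prove $d_{N-1}\circ d_N=0$ on each generator $1\otimes(s,t)\otimes 1$, which I would do by direct computation, splitting into the four cases according to the parities of $s$ and $t$ listed before the lemma. In each case, combining the explicit description of $\overline{\LL}_{N-1}^\prec(s,t)$ with the formula for $\delta_N$ (using the left and right factorizations $(s,t)=u_0\cdots u_N=v_N\cdots v_0$) yields a short closed expression for $d_N(1\otimes(s,t)\otimes 1)$ as a $\ZZ$-linear combination of elementary tensors supported on $(s-1,t)$ and $(s,t-1)$. Applying $d_{N-1}$ to each of these produces terms supported at $(s-2,t)$, $(s-1,t-1)$ and $(s,t-2)$, and the task is to check that these cancel pairwise.

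The $(s-2,t)$ and $(s,t-2)$ pieces come from applying $d_{N-1}$ to the leading tensors of $d_N$; their cancellation is essentially the combinatorial identity $\delta_{N-1}'\circ\delta_N'=0$ in Bardzell's resolution of the associated monomial algebra $A_S$, transported via $\pi$. The genuine non-monomial obstruction is concentrated in the $(s-1,t-1)$ piece: here the commutation $yx=\xi xy$ in $A$ forces $\pi(y)\pi(x)=\xi\pi(x)\pi(y)$, producing cross-terms that must be matched by the correction. The sign $(-1)^s$ flips to $(-1)^{s-1}=-(-1)^s$ when $d_{N-1}$ acts on an $(s-1,t)$-summand, and the powers $\xi^{\varphi(s,m)}$, $\xi^{\varphi(t,n)}$ recorded in the definition of $\overline{\LL}_{N-1}^\prec(s,t)$ line up precisely to produce the cancellation, as already witnessed in the $N=1$ computation and in the $N=3$ case treated in the remark preceding the lemma.

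The main obstacle will be the bookkeeping: the four parity cases further split into sub-cases according to whether $s=0$ or $t=0$ (where $\overline{\LL}_{N-1}^\prec(s,t)$ degenerates), and in cases 1 and 3 the corrections contain telescoping sums over $j=1,\dots,m-1$ and $i=1,\dots,n-1$. A compact way to organize the verification is to introduce shorthand for the ``leading'' left and right tensors $L(s,t)=y\otimes(s-1,t)\otimes 1$ and $R(s,t)=1\otimes(s,t-1)\otimes x$, write $d_N=L+\epsilon R+(\text{correction})$ uniformly, and check case by case that the $(s-1,t-1)$-supported pieces of $d_{N-1}(L(s,t))+\epsilon\,d_{N-1}(R(s,t))$ are canceled by $d_{N-1}$ applied to the correction, using $yx=\xi xy$ to collapse the resulting monomials to the same basis element of $A$.
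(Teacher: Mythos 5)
Your plan is correct and coincides with the paper's approach: the paper's entire proof of this lemma is the single sentence ``It is straightforward,'' i.e.\ precisely the direct verification you outline --- condition (2) of Theorem \ref{teo1} holds by construction, and $d_{N-1}\circ d_N=0$ is checked on generators by the four-fold parity case analysis with the degenerate sub-cases $s=0$, $t=0$. One inaccuracy in your roadmap, though not one that derails the argument: the $(s,t-2)$- and $(s-2,t)$-supported pieces of $d_{N-1}\circ d_N$ do \emph{not} cancel purely via the monomial identity $\delta_{N-1}'\circ\delta_N'=0$; for instance, already for $n=m=2$ the purely monomial contributions at $(s,t-2)$ die because $x^2=0$ in $A$, and what remains is the cross-cancellation of $(-1)^{N+1+s}\xi^{s}\,x\otimes y^{s}x^{t-2}\otimes x$ (from the $\delta$-part of $d_{N-1}$ applied to the correction term) against $(-1)^{N+s}\xi^{s}\,x\otimes y^{s}x^{t-2}\otimes x$ (from the correction part of $d_{N-1}$ applied to the $\delta$-term), so the non-monomial content is not concentrated solely in the $(s-1,t-1)$ piece. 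Since your framework is simply to verify pairwise cancellation of all resulting terms, this mislabelling of where the work lies does not affect the validity of the proof.
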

\begin{proof}
 It is straightforward.
\end{proof}

We gather all the information we have obtained about the projective bimodule
resolution of $A$ in the following proposition.

\begin{proposition}
 The complex of $A$-bimodules $(A\otimes_Ek\A_\bullet\otimes_EA,d_\bullet)$, 
 with 
 \[
 \A_N=\{y^{\varphi(s,m)}x^{\varphi(t,n)}:s+t=N+1\}
 \]
  and differentials
 defined as follows is exact.
 \begin{enumerate}
  \item For $N$ even, $s$ even and $t$ odd,
  {\small\[
  \begin{aligned}
   d_N(1\otimes(s,t)\otimes1)&=y^{m-1}\otimes(s-1,t)\otimes1 
   +\sum_{j=1}^{m-1}(-1)^s\xi^{\varphi(t,n)j}y^{m-1-j}\otimes(s-1,t)\otimes y^j\\
   &+(-1)^{N+1}1\otimes(s,t-1)\otimes x
   +(-1)^s\xi^{\varphi(s,m)}x\otimes(s,t-1)\otimes 1.
  \end{aligned}
  \]}
  \item For $N$ even, $s$ odd and $t$ even,
  {\small\[
  \begin{aligned}
   d_N(1\otimes(s,t)\otimes1)&=y\otimes(s-1,t)\otimes1 
   + (-1)^s\xi^{\varphi(t,n)}\otimes(s-1,t)\otimes y\\
   &+(-1)^{N+1}1\otimes(s,t-1)\otimes x^{n-1} + \sum_{i=1}^{n-1}(-1)^s\xi^{\varphi(s,m)i}x^i\otimes(s,t-1)\otimes x^{n-1-i}
  \end{aligned}
  \]}
  \item For $N$ odd, $s$ and $t$ even,
  {\small\[
  \begin{aligned}
   d_N(1\otimes(s,t)\otimes 1)&=y^{m-1}\otimes(s-1,t)\otimes1 +
   \sum_{j=1}^{m-1}(-1)^s\xi^{\varphi(t,n)j}y^{m-1-j}\otimes(s-1,t)\otimes y^j\\
   &+(-1)^{N+1}1\otimes(s,t-1)\otimes x^{n-1} + \sum_{i=1}^{n-1}(-1)^s\xi^{\varphi(s,m)i}x^i\otimes(s,t-1)\otimes x^{n-1-i}
  \end{aligned}  
  \]}
  \item For $N$, $s$ and $t$ odd,
  {\small\[
  \begin{aligned}
   d_N(1\otimes(s,t)\otimes1) &=y\otimes(s-1,t)\otimes1 
   + (-1)^s\xi^{\varphi(t,n)}\otimes(s-1,t)\otimes y\\
   &+(-1)^{N+1}1\otimes(s,t-1)\otimes x 
   + (-1)^s\xi^{\varphi(s,m)}x\otimes(s,t-1)\otimes1.
  \end{aligned}     
  \]}
 \end{enumerate}
 \end{proposition}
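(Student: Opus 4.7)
The plan is to deduce the proposition from the preceding lemma and Theorem \ref{teo1}. The lemma provides $A$-bimodule maps $d_N$ via the compact formula
$$d_N(1\otimes(s,t)\otimes 1) = \delta_N(1\otimes(s,t)\otimes 1) + (-1)^s \sum_{u\in\overline\LL_{N-1}^\prec(s,t)} u,$$
and asserts that both hypotheses of Theorem \ref{teo1} are met at every level. Once this is granted, Theorem \ref{teo1} yields exactness of the truncation at each $N$, and hence of the full unbounded complex.

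Thus the remaining task is to verify that the four explicit parity-case formulas displayed in the proposition coincide with the compact formula. For each parity combination of $(s,t,N)$, I would first determine the left and right $N$-ambiguity factorizations of $(s,t) = y^{\varphi(s,m)} x^{\varphi(t,n)}$. Since $S = \{y^m, yx, x^n\}$, any such factorization alternates between blocks of $y$'s and blocks of $x$'s, with the $y$-blocks of length $m-1$ or $1$ and the $x$-blocks of length $1$ or $n-1$, meeting at a single $yx$-junction in the middle. The parities of $s$ and $t$ determine whether $u_0$ and $v_0$ have length $1$ or length $m-1$ (resp.\ $n-1$), producing the four cases.

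From these factorizations I would then write down $\delta_N(1\otimes(s,t)\otimes 1)$ using its definition, namely $v_N \otimes v_{N-1}\cdots v_0 \otimes 1 - 1\otimes u_0\cdots u_{N-1} \otimes u_N$ for $N$ even, and the sum over factorizations $(s,t) = apc$ with $p \in \A_{N-1}$ for $N$ odd. Adding $(-1)^s$ times the enumeration of $\overline\LL_{N-1}^\prec(s,t)$ given in the preceding remark and comparing term by term with the stated formula recovers the proposition in each of the four cases.

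The main obstacle is the bookkeeping of signs and $\xi$-exponents across the four parity cases: the sign $(-1)^{N+1}$ implicit in $\delta_N$ for odd $N$, the factor $(-1)^s$ scaling the $\overline\LL_{N-1}^\prec$ sum, and the $\xi$-powers (whose exponents $\varphi(s,m)$ and $\varphi(t,n)$ themselves jump with parity) must all be tracked consistently. A brief separate look at the boundary cases $s=0$ or $t=0$ is needed, since then one of the two ambiguity factorizations degenerates into a pure power of $y$ or of $x$; but these correspond precisely to the extreme cases $1\otimes y^{m+1}\otimes 1$ and $1\otimes x^{n+1}\otimes 1$ already computed in the opening lemma of this subsection, and they create no additional difficulty.
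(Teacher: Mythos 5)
Your proposal matches the paper's own (implicit) argument: the paper presents this proposition as a summary of the preceding lemma, obtaining exactness from Theorem \ref{teo1} and the four explicit formulas by expanding $\delta_N(1\otimes(s,t)\otimes 1)+(-1)^s\sum_{u\in\overline\LL_{N-1}^\prec(s,t)}u$ using the already-listed factorizations of $y^{\varphi(s,m)}$ and $x^{\varphi(t,n)}$ and the four-case description of $\overline\LL_{N-1}^\prec(s,t)$. Your plan, including the treatment of the degenerate cases $s=0$ or $t=0$, is correct and essentially identical to the paper's route.
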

 Again, we obtain the minimal resolution of $A$, even for $n\neq2$ or $ 
m\neq 2$, when the algebra is not homogeneous.

\subsection{Down-up algebras}
Given $\alpha,\beta,\gamma\in k$, we will denote $A(\alpha,\beta,\gamma)$ the
quotient of $k\llrr{d,u}$ by the two sided ideal $I$ generated by relations
\[
 \begin{aligned}
  &d^2u-\alpha dud - \beta ud^2 - \gamma d = 0,\\
  &du^2-\alpha udu - \beta u^2d - \gamma u=0.
 \end{aligned}
\]
Down-up algebras have been deeply studied  
since they were defined in \cite{BR}. We can mention the articles \cite{CM}, \cite{BW},\cite{BG}, 
 \cite{CS}, \cite{CL}, \cite{KK}, \cite{KMP}, \cite{Ku1}, \cite{Ku2}, \cite{P1},
\cite{P2}, \cite{P3}, in which the authors prove diverse properties of 
down-up algebras.
It is well known that they 
are noetherian if and only if $\beta\neq0$ \cite{KMP}. They are graded with
$\mathsf{dg}(d)=1$, $\mathsf{dg}(u)=-1$, and they are filtered if we consider
$d$ and $u$ of weight $1$. If $\gamma=0$ they are also graded by this weight.

Down-up algebras are $3$-Koszul if $\gamma=0$, and if $\gamma\neq0$, they 
are PBW deformations of $3$-Koszul algebras \cite{BG}.

Little is known about their Hochschild homology and cohomology, except for the center, described in \cite{Z} and \cite{Ku1}.
We apply our methods to construct a projective resolution of $A$ as $A$-bimodule,
and then use this resolution to compute $H^\bullet(A,A^e)$ and prove that 
in the noetherian case, $A(\alpha,\beta,\gamma)$ is $3$-Calabi-Yau if and only 
if $\beta=-1$. Moreover, in this situation we exhibit a potential $\Phi(d,u)$ such that
the relations are in fact the cyclic derivatives $\partial_u\Phi$ and 
$\partial_d\Phi$, respectively.

 We briefly recall that a $d$-Calabi-Yau algebra is an associative algebra
 such that there is an isomorphism $f$ of $A$-bimodules
 \[
  \Ext_{A^e}^i(A,A^e)\cong\begin{dcases*}
                           0 & if $i\neq d$,\\
                           A & if $i=d$.
                          \end{dcases*}
 \]
 where the $A$-bimodule outer structure of $A^e$ is used for the computation
 of $\Ext_{A^e}^i(A,A^e)$, while the isomorphism $f$ takes account of the inner
 bimodule structure of $A^e$. Bocklandt proved in \cite{Bo} that graded Calabi-Yau algebras come from a 
 potential and Van den Bergh \cite{VdB} generalized this result to complete algebras with respect to the $I$-adic
 topology.
 
We fix a lexicographical order such that $d<u$, with weights 
$\omega(d)=1=\omega(u)$. The reduction system  
$\mathcal R =\{(d^2u, \alpha dud + \beta ud^2 + \gamma d),(du^2,\alpha udu + \beta u^2d + \gamma u)\}$
has $\B=\{u^i(du)^kd^j:i,k,j\in\NN_0\}$ as set of irreducible paths and
$\A_2=\{d^2u^2\}$; using Bergman's Diamond Lemma we see that $\mathcal R$ satisfies
condition $(\lozenge)$. Also, $\A_0=\{d,u\}$ and $\A_n=\emptyset$ for all $n\geq3$.
The set $\B$ is the $k$-basis already considered in \cite{BR}. 

The reductions $r^{d^2u^2}=(r_{u,d^2u,1},r_{1,d^2u,u})$ and 
$t^{d^2u^2}=(t_{1,du^2,d},t_{d,du^2,1})$ are respectively left and right 
reductions of $d^2u^2$.

In view of Proposition \ref{prop:principi_resolu} and observing that $\delta_{-1}$ is in fact an epimorphism and that $\A_3=\emptyset$, the following complex 
gives a free resolution of $A$ as $A$-bimodule:
{\small
\[
0 \to A\otimes_Ekd^2u^2\otimes_EA\to ^{d_2} A\otimes_E(kd^2u\oplus kdu^2)\otimes_EA\to ^{d_1} A\otimes_E(kd\oplus ku)\otimes_EA\to ^{\delta_0} A\otimes_EA\to ^{\delta_{-1}} A\to 0
\]
}

where 
 {\small\[
 \begin{aligned}
  d_1(1\otimes d^2u\otimes 1) &= 1\otimes d\otimes du+ d\otimes d\otimes u + d^2\otimes u\otimes 1
  -\alpha(1\otimes d\otimes ud + d\otimes u\otimes d + du\otimes d\otimes 1)\\
  &-\beta(1\otimes u \otimes d^2 + u\otimes d\otimes d + ud\otimes d\otimes 1)
   -\gamma\otimes d\otimes 1,\\
  d_1(1\otimes du^2\otimes 1) &= 1\otimes d\otimes u^2 + d\otimes u\otimes u
  +du\otimes u\otimes 1 - \alpha(1\otimes u\otimes du + u\otimes d\otimes u
  + ud\otimes u\otimes 1)\\
  &-\beta(1\otimes u\otimes ud + u\otimes u\otimes d + u^2\otimes d\otimes 1) - \gamma\otimes u\otimes1,
 \end{aligned}
 \]}
 and
 {\small\[
  \begin{aligned}
    d_2(1\otimes d^2u^2\otimes 1)=d\otimes du^2\otimes 1
    +\beta\otimes du^2\otimes d - 1\otimes d^2u\otimes u 
    - \beta u\otimes d^2u\otimes 1.
  \end{aligned}
 \]}
 As we have proved in general, the map $d_2$ takes into account the reductions
 applied to the ambiguity.

\begin{proposition}
 Suppose that $\beta\neq0$. The algebra $A(\alpha,\beta,\gamma)$ is 
 $3$-Calabi-Yau if and only if $\beta=-1$.
\end{proposition}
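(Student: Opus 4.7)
The plan is to apply $\Hom_{A^e}(-, A^e)$ to the length-$3$ projective bimodule resolution $P_\bullet$ of $A$ just constructed. Writing $P_i = A \otimes_k V_i \otimes_k A$ with $\dim_k V_i = 1, 2, 2, 1$ for $i = 0, 1, 2, 3$ respectively, each $C^i := \Hom_{A^e}(P_i, A^e)$ is again a free $A^e$-module of the same rank, with the $A$-bimodule structure coming from the inner action of $A^e$ on itself. The cohomology of $C^\bullet$ is $\Ext^\bullet_{A^e}(A, A^e)$, and the $3$-Calabi--Yau condition amounts to exactness of the augmented sequence
\[
 0 \to C^0 \to C^1 \to C^2 \to C^3 \to A \to 0,
\]
which would give $\Ext^3_{A^e}(A, A^e) \cong A$ together with vanishing in the other degrees.

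For the sufficient direction ($\beta = -1 \Rightarrow 3$-CY), we exhibit the superpotential
\[
 \Phi \ =\ d^2u^2 - \tfrac{\alpha}{2}\, dudu - \gamma\, du
\]
and verify by a direct computation of cyclic derivatives that $\partial_u \Phi = d^2u + ud^2 - \alpha\, dud - \gamma d$ and $\partial_d \Phi = du^2 + u^2d - \alpha\, udu - \gamma u$, which for $\beta = -1$ match $r_1$ and $r_2$ respectively. From $\Phi$ we produce explicit $A$-bimodule isomorphisms $\sigma_i: P_i \to C^{3-i}$: the maps $\sigma_0$ and $\sigma_3$ are the canonical identifications $A^e \cong A^e$, while $\sigma_1$ and $\sigma_2$ exchange the generating bases $\{d, u\}$ and $\{du^2, d^2u\}$ via the pairing prescribed by the cyclic-derivative structure of $\Phi$. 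Compatibility of $\sigma_\bullet$ with the differentials (conditions of the form $\sigma_{i-1} \circ d_{i-1} = d_{3-i}^* \circ \sigma_i$) then reduces to the cyclic-derivative identities above, and gives an isomorphism of exact sequences between the original resolution and the augmented dual complex.

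For the necessary direction ($3$-CY $\Rightarrow \beta = -1$), we compute $\Ext^3_{A^e}(A, A^e) = \operatorname{Coker}(d_2^*)$ directly. From the explicit formula
\[
 d_2(1 \otimes d^2u^2 \otimes 1) = d \otimes du^2 \otimes 1 + \beta \otimes du^2 \otimes d - 1 \otimes d^2u \otimes u - \beta u \otimes d^2u \otimes 1,
\]
the dual $d_2^*: A^e \oplus A^e \to A^e$ is left multiplication by the row $(-u^{\mathrm{op}} - \beta u,\ d + \beta d^{\mathrm{op}})$. For this cokernel to be isomorphic to the regular $A$-bimodule $A$ (with the inner-action structure), the two generating elements must be symmetric under the flip $a \otimes b^{\mathrm{op}} \leftrightarrow b \otimes a^{\mathrm{op}}$ of $A^e$; this symmetry condition is readily seen to be equivalent to $\beta = -1$. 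For $\beta \neq -1$ the cokernel is instead a twisted bimodule $A_\sigma$ for a nontrivial automorphism $\sigma$ of $A$, so $A$ is at best twisted Calabi--Yau but not Calabi--Yau in the strict sense.

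The main obstacle lies in the careful handling of the ``inner'' versus ``outer'' $A^e$-bimodule structures when dualizing (since the two become entangled under the transpose of matrices with entries in $A^e$), together with the verification that the chain isomorphism $\sigma_\bullet$ built from $\Phi$ remains compatible with the inhomogeneous $\gamma$-terms of the differentials—these terms break the natural homogeneous grading and must be tracked by hand.
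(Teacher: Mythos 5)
Your proposal follows essentially the same route as the paper: apply $\Hom_{A^e}(-,A^e)$ to the explicit length-$3$ bimodule resolution, identify the resulting dual complex with the original resolution (the paper does this through explicit isomorphisms $\psi_0,\dots,\psi_3$ valid for every $\beta$, which is exactly what your superpotential-induced chain maps $\sigma_\bullet$ amount to in the case $\beta=-1$), and read off $\Ext^3_{A^e}(A,A^e)\cong\operatorname{Coker}(d_2^*)\cong A_\sigma$ for the automorphism $\sigma(d)=-\beta d$, $\sigma(u)=-\beta^{-1}u$, together with vanishing in lower degrees. The one step you should tighten is in the necessity direction: ``flip-symmetry of the two generators of $\operatorname{im}(d_2^*)$'' is not by itself a criterion for the cokernel to be isomorphic to $A$ (an isomorphism need not send the canonical generator to a scalar); the correct argument, which is the ``short computation'' the paper alludes to, is that $A_\sigma\cong A$ as bimodules forces $\sigma$ to be inner, and since a noetherian down-up algebra is a domain whose only units are scalars, $\sigma$ must then be the identity, i.e.\ $\beta=-1$.
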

\begin{proof}
 We need to compute $\Ext_{A^e}^\bullet(A,A^e)$. We apply the functor 
 $\Hom_{A^e}(-,A^e)$ to the previous resolution,
 and we use
 that for any finite dimensional vector space $V$ which is also an
 $E$-bimodule, the space $\Hom_{A^e}(A\otimes_EV\otimes_EA,A^e)$ is isomorphic to 
 $\Hom_{E^e}(V,A^e)$, and this last one is, in turn, isomorphic to 
 $A\otimes_EV^*\otimes_EA$. All the isomorphisms are natural.
 The explicit expression of the last isomorphism
 is, fixing a $k$-basis $\{v_1,\dots,v_n\}$ of $V$ and its dual 
 basis $\{\varphi_1,\dots,\varphi_n\}$ of $V^*$,
 \[
  \begin{aligned}
   A\otimes_EV^*\otimes_EA&\to\Hom_{E^e}(V,A^e)\\
   a\otimes\varphi\otimes b&\mapsto [v\mapsto \varphi(v)b\otimes a]
  \end{aligned}
 \]
 with inverse $f\mapsto \sum_{i,j}b_j^i\otimes \varphi_i\otimes a_j^i$, where
 $f(v_i)=\sum_ja_j^i\otimes b_j^i$.
 
 After these identifications, we obtain the following complex of $k$-vector
 spaces whose homology is $\Ext_{A^e}^\bullet(A,A^e)$
 
 
 \[
 0\to A\otimes_EA\overset{\delta_0^*}{\to} A\otimes_E(kD\oplus kU)\otimes_EA\overset{d_1^*}{\to}A\otimes_E(kD^2U\oplus kDU^2)\otimes_EA\overset{d_2^*}{\to} A\otimes_EkD^2U^2\otimes_EA\to 0,
 \]
 
 where $\{D,U\}$ denotes the dual basis of $\{d,u\}$ and, accordingly, we 
 denote with capital letters the dual bases of the other spaces .
 
 The maps in the complex are, explicitely:
 {\small\[
  \begin{aligned}
   &\delta_0^*(1\otimes 1) =1\otimes D\otimes d-d\otimes D\otimes1 
       + 1\otimes U\otimes u - u\otimes U\otimes 1\\
   &d_1^*(1\otimes U\otimes 1) = 1\otimes D^2U\otimes d^2 
       -\alpha d\otimes D^2U\otimes d - \beta d^2\otimes D^2U\otimes 1
       +u\otimes DU^2\otimes d \\
       &\hskip2.3cm+ 1\otimes DU^2\otimes du - \alpha du\otimes DU^2\otimes 1
       -\alpha\otimes DU^2\otimes ud - \beta ud\otimes DU^2\otimes 1 \\
       &\hskip2.3cm-\beta d\otimes DU^2\otimes u - \gamma\otimes DU^2\otimes 1.\\
   &d_1^*(1\otimes D\otimes 1) = du\otimes D^2U\otimes 1 
       + u\otimes D^2U\otimes d-\alpha ud\otimes D^2U\otimes 1
       -\alpha\otimes D^2U\otimes du \\
       &\hskip2.3cm- \beta d\otimes D^2U\otimes u-\beta\otimes D^2U\otimes ud
       -\gamma\otimes D^2U\otimes 1 + u^2\otimes DU^2\otimes 1 \\
       &\hskip2.3cm-\alpha u\otimes DU^2\otimes u-\beta\otimes DU^2\otimes u^2.\\
   &d_2^*(1\otimes DU^2\otimes1)=1\otimes D^2U^2\otimes d 
       + \beta d\otimes D^2U^2\otimes 1,\\
   &d_2^*(1\otimes D^2U\otimes 1)=-u\otimes D^2U^2\otimes 1
   -\beta\otimes D^2U^2\otimes u.
  \end{aligned}
 \]}
 Consider the following isomorphisms of $A$-bimodules
 {\small
 \begin{align*}
 &\psi_0:A\otimes_EA\to A\otimes_Ekd^2u^2\otimes_EA,\\ 
    &\hskip1cm\psi_0(1\otimes 1)=1\otimes d^2u^2\otimes 1,\\
 &\psi_1:A\otimes_E(kD\oplus kU)\otimes_EA\to A\otimes_E(kd^2u\oplus kdu^2)\otimes_EA\\
    &\hskip1cm\psi_1(1\otimes D\otimes 1)=1\otimes du^2\otimes 1,\mbox{ and }\psi_1(1\otimes U\otimes 1)=1\otimes d^2u\otimes 1\\
 &\psi_2:A\otimes_E(kD^2U\oplus kDU^2)\otimes_EA\to A\otimes_E(kd\oplus ku)\otimes_EA,\\
    &\hskip1cm\psi_2(1\otimes D^2U\otimes 1) = 1\otimes u\otimes 1,\mbox{ and }\psi_2(1\otimes DU^2\otimes 1)=1\otimes d\otimes1\\
 &\psi_3:A\otimes_E kD^2U^2\otimes_E\to A\otimes_EA\\
    &\hskip1cm\psi_3(1\otimes D^2U^2\otimes 1) = 1\otimes 1.
 \end{align*}
 }
 
 It is straightforward to verify that the following diagram commutes, thus
 inducing isomorphisms between the homology spaces of both horizontal 
 sequences:
 
 {\footnotesize\[
  \xymatrix{
  0\ar[r] & A\otimes_EA\ar[r]^-{\delta_0^*}\ar[d]^-{\psi_0} & A\otimes_E(k\A_0)^*\otimes_EA\ar[r]^-{d_1^*}\ar[d]^-{\psi_1} & A\otimes_E(k\A_1)^*\otimes_EA\ar[r]^-{d_2^*}\ar[d]^-{\psi_2} & A\otimes_E (kA_2)^* \otimes_EA\ar[r]\ar[d]^-{\psi_3} & 0\\
  0\ar[r] & A\otimes_Ek\A_2\otimes_EA\ar[r]^-{\overline d_0} & A\otimes_Ek\A_1\otimes_EA\ar[r]^-{\overline d_1} & A\otimes_Ek\A_0\otimes_EA\ar[r]^-{\overline d_2} & A\otimes_EA\ar[r] & 0}
 \]}
 
 
 where $\overline d_0$ is given by 
 \begin{align*}
  \overline d_0(1\otimes d^2u^2\otimes 1) = 1\otimes du^2\otimes d- d\otimes du^2\otimes 1 - u\otimes d^2u\otimes 1+1\otimes d^2u\otimes u.
 \end{align*}
 
 $\overline d_1$ is
 \begin{align*}
  \overline d_1(1\otimes d^2u\otimes 1) &= 1\otimes d\otimes du - 
     \beta d\otimes d\otimes u - \beta d^2\otimes u\otimes 1\\
     &-\alpha(1\otimes d\otimes ud + d\otimes u\otimes d + du\otimes d\otimes 1)\\
     &-\beta(-\beta^{-1}\otimes u\otimes d^2 - \beta^{-1} u\otimes d\otimes d + ud\otimes d\otimes 1) - \gamma\otimes d\otimes 1 \\
  \overline d_1(1\otimes du^2\otimes 1)&= -\beta\otimes d\otimes u^2 - \beta d\otimes u\otimes u + du\otimes u\otimes 1\\
     &-\alpha(1\otimes u\otimes du + u\otimes d\otimes u + ud\otimes u\otimes 1)\\
     &-\beta(1\otimes u\otimes ud - \beta^{-1}u\otimes u\otimes d - \beta^{-1}u^2\otimes d\otimes 1) - \gamma\otimes u\otimes 1
 \end{align*}
 
 and $\overline d_2$ is
 \begin{align*}
  &\overline d_2(1\otimes u\otimes 1) = -\beta\otimes u - u\otimes 1,&
   &\overline d_2(1\otimes d\otimes 1) = 1\otimes d + \beta d\otimes 1,
 \end{align*}
 
 From this we deduce that $HH^3(A,A^e)\cong A\otimes_EA/(\im \overline d_2)$.
 Let $\sigma$ be the algebra automorphism of $A$ 
 defined by $\sigma(d)=-\beta d$, $\sigma(u)=-\beta^{-1}u$. Recall that $A_\sigma$ is the $A$-bimodule with $A$ as underlying vector space and action of $A\otimes_kA^{op}$ 
 given by: $(a\otimes b)\cdot x = ax\sigma(b)$,
 that is, it is twisted on the right by the automorphism $\sigma$.

 It is easy to see that if $\beta\neq 0$ then $ A_\sigma\cong A\otimes_EA/(\im \overline d_2)\cong HH^3(A,A^e)$ as $A$-bimodules. If $\beta=0$ then the action on the left by $u$ on $HH^3(A,A^e)$ is zero
 and then $A\ncong HH^3(A,A^e)$ since the action on the left by $u$ on $A$ 
is injective. We conclude after a short computation that 
$HH^3(A,A^e)\cong A$ if and only if $\beta=-1$.
 Notice that for $\beta=-1$ the complex in the second line of the diagram above is the resolution of $A$. As a consequence, $A$ is $3$-Calabi-Yau if and only if $\beta=-1$. In this case
  the potential $\Phi$ equals 
 $d^2u^2 + \frac{\alpha}{2}dudu + \gamma du$. For $\beta\neq 0,-1$, we shall 
 see in a forthcoming article that $A$ is twisted $3$-Calabi-Yau algebra 
\cite{BSW}, coming from a twisted potential.
\end{proof}

\section{Final remarks}
\label{s:Final remarks}

We have studied some examples of algebras, in particular of $N$-Koszul algebras for which we managed 
to obtain the minimal resolution using our methods. This fact can be 
stated in general as follows.
\begin{theorem}
\label{teo3}
 Given an algebra $A=kQ/I$ such that
 \begin{enumerate}
  \item there is a reduction system $\mathcal R=\{(s_i,f_i)\}_i$ for $I$ 
satisfying $(\lozenge)$ with $s_i$ and $f_i$ homogeneous of length 
$N\geq2$ for all $i$,
\label{finalremarksitem1}
  \item for all $n\in\NN$, the length of the elements of $\A_n$ is 
strictly smaller that the length of the elements of $\A_{n+1}$.
\label{finalremarksitem2}
 \end{enumerate}
The resolutions of $A$ as $A$-bimodule obtained using Theorem \ref{teo1} 
and Theorem \ref{teo2} are minimal.
\end{theorem}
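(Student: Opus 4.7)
The plan is to verify directly that every differential $d_n$ produced by Theorem~\ref{teo1} or Theorem~\ref{teo2} sends each generator $1\otimes q\otimes 1$ into the sub-bimodule
\[
 J\otimes_E k\A_{n-1}\otimes_E A + A\otimes_E k\A_{n-1}\otimes_E J,
\]
where $J\subseteq A$ denotes the two-sided ideal generated by the arrows $Q_1$. Since hypothesis~(\ref{finalremarksitem1}) forces $I\subseteq J^N\subseteq J^2$, we have $A/J=E$, so the displayed module is exactly the radical of $A\otimes_Ek\A_{n-1}\otimes_EA$, and together with the fact that $d_{-1}$ is the multiplication map $A\otimes_EA\to A$ (whose kernel is visibly in $J\otimes_EA+A\otimes_EJ$) this is the standard criterion for $(A\otimes_Ek\A_\bullet\otimes_EA,d_\bullet)$ to be a minimal projective bimodule resolution of $A$.

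The first step I would take is to exploit hypothesis~(\ref{finalremarksitem1}) to show that the order $\preceq$ is length-preserving. A basic reduction $r_{a,s_i,c}$ replaces the subpath $s_i$, of length $N$, by $f_i$, which is homogeneous of length $N$; hence no basic reduction changes the length of any path in its support. Passing to arbitrary reductions and then to the transitive-reflexive closure, whenever $\lambda p \preceq \mu q$ in $k^\times Q_{\geq0}$ one has $|p|=|q|$.

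Next, Theorem~\ref{teo2} writes
\[
 d_n(1\otimes q\otimes 1) = \delta_n(1\otimes q\otimes 1) + \xi_q,\qquad \xi_q\in\llrr{\overline{\LL}_{n-1}^\prec(q)}_\ZZ,
\]
and I would analyse both summands. Each term of $\xi_q$ has the form $\lambda\pi(b)\otimes p\otimes\pi(b')$ with $b,b'\in\B$, $p\in\A_{n-1}$ and $\lambda bpb'\preceq q$; the length argument yields $|b|+|p|+|b'|=|q|$, while hypothesis~(\ref{finalremarksitem2}) gives $|p|<|q|$, so $|b|+|b'|\geq 1$ and at least one of $\pi(b),\pi(b')$ lies in $J$. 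For $\delta_n(1\otimes q\otimes 1)$ itself, when $n$ is odd the sum $\sum_{apc=q,\,p\in\A_{n-1}}\pi(a)\otimes p\otimes\pi(c)$ cannot have both $a,c\in Q_0$, since that would force $q=p$, contradicting hypothesis~(\ref{finalremarksitem2}); and when $n$ is even the two terms $\pi(v_n)\otimes v_{n-1}\cdots v_0\otimes 1$ and $-1\otimes u_0\cdots u_{n-1}\otimes\pi(u_n)$ satisfy $|v_n|,|u_n|\geq 1$ because $v_nv_{n-1}$ and $u_{n-1}u_n$ must be reducible while $v_{n-1}$ and $u_{n-1}$ are irreducible.

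I do not expect a substantial obstacle: hypotheses~(\ref{finalremarksitem1}) and~(\ref{finalremarksitem2}) are engineered precisely to make the length bookkeeping above go through uniformly in $n$. The mild subtleties are the case split by parity of $n$ in the formula for $\delta_n$, and the observation that the very first differential $d_0=\delta_0$ also lands in the radical, since its value on $1\otimes\alpha\otimes 1$ with $\alpha\in Q_1$ is $\pi(\alpha)\otimes 1 - 1\otimes\pi(\alpha)\in J\otimes_E A+A\otimes_E J$.
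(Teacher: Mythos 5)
Your proof is correct and follows essentially the same route as the paper's: hypothesis (1) makes the order $\preceq$ length-preserving, and hypothesis (2) then forces every term $\lambda\pi(b)\otimes p\otimes\pi(b')$ of $(d_n-\delta_n)(1\otimes q\otimes 1)$ to satisfy $|b|+|b'|\geq 1$, so that $\im(d_n)$ lies in the radical of $A\otimes_Ek\A_{n-1}\otimes_EA$. Your additional explicit verification that the Bardzell summand $\delta_n(1\otimes q\otimes 1)$ also lands in the radical (via the parity case split) fills in a step the paper leaves implicit, but it does not change the argument.
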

\begin{proof}
 Let $(A\otimes_Ek\A_\bullet\otimes_EA,d_\bullet)$ be a resolution of 
$A$ as $A$-bimodule obtained using Theorem \ref{teo1} or Theorem 
\ref{teo2}. Denote by $|c|$ the length of a path $c\in Q_{\geq0}$. 
Condition \eqref{finalremarksitem1} guarantees that for all paths $p,q$ 
such that $\lambda p\preceq q$ for some $\lambda\in k^\times$, 
we have $|p| = |q|$. Let $n\geq0$,  $q\in\A_n$ and $\lambda \pi(b)\otimes 
p\otimes\pi(b')\in\overline{\mathcal L}^\prec_{n-1}(q)$. Since 
$p\in\A_{n-1}$, condition \eqref{finalremarksitem2} says that $|p|<|q|$. On 
the other hand, $\lambda bpb'\prec q$ and then $|bpb'| = |q|$. We deduce 
that $b\in Q_{\geq1}$ or $b'\in Q_{\geq1}$. As a consequence, $\im(d_n)$ is 
contained in the radical of $A\otimes_Ek\A_{n-1}\otimes_EA$ and therefore 
the resolution of $A$ is minimal.
\end{proof}
\begin{remark}
 The conclusion holds in a more general situation, which includes Example 
\ref{ej:qci}. It is sufficient to have a reduction system satisfying 
\eqref{finalremarksitem1} and such that the ambiguities $p$ that appear 
when reducing a given $n+1$-ambiguity $q$ are of length strictly smaller 
than the length of $q$.
\end{remark}
\begin{remark}
 In Example \ref{ej:cubica}, the reduction system $\mathcal R_2$ satisfies 
the conditions of Theorem \ref{teo3}, while $\mathcal R_1$ does not 
satisfy \eqref{finalremarksitem2}.
\end{remark}

Notice that if $\mathcal R$ is a reduction system for an algebra 
for which there is a non-resolvable ambiguity, then, even if we complete 
it like we did in Example \ref{ej:cubica}, the resolutions obtained using 
Theorem \ref{teo1} and Theorem \ref{teo2} will not be minimal.
 
\bigskip

We end this article proving a generalization of Prop. 8 of \cite{GM} and a corollary. 

\begin{proposition}
Let $A= kQ/I$, where $Q$ is a finite quiver, $kQ$ is the 
path algebra graded by the length of paths and $I$ a homogeneous ideal with respect to this grading, 
contained in $Q_{\ge 2}$. Let $\mathcal{R}$ be a reduction system satisfying conditions (1) and (2) of Theorem 
\ref{teo3}
and let $A_S$ be 
the associated monomial algebra. 
The algebra $A_S$ is $N$-Koszul if and only if $A$ is an $N$-Koszul algebra.
\end{proposition}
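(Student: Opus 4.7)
The plan is to show that both $A$ and $A_S$ admit minimal graded bimodule resolutions built from the \emph{same} sets $\A_n$ with the \emph{same} length grading, so that the $N$-Koszul condition reduces to a single length condition on $\A_\bullet$ which is common to both algebras.

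First I would verify the graded setup. Since $s_i - f_i$ is homogeneous of length $N$, the ideal $I$ is homogeneous, so $A=kQ/I$ inherits the length grading from $kQ$, with $A_0=E$; likewise $A_S$ is $N$-homogeneous. Each basic reduction $r_{a,s,c}$ sends $asc$ to $afc$ with $|s|=|f|=N$, hence reductions preserve length. From this it follows that if $\lambda p\preceq q$ then $|p|=|q|$, and therefore every element of $\overline{\LL}_{n-1}^{\prec}(q)$ has total length equal to $|q|$. This in turn implies that the differentials $d_n$ from Theorems \ref{teo1} and \ref{teo2} preserve the length grading, so $(A\otimes_E k\A_\bullet\otimes_E A, d_\bullet)$ is a graded bimodule resolution of $A$. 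By Theorem \ref{teo3}, this resolution is minimal.

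Next I would invoke the fact that Bardzell's resolution $(A_S\otimes_E k\A_\bullet\otimes_E A_S,\delta'_\bullet)$ is the minimal graded bimodule resolution of the monomial algebra $A_S$. Crucially, the set of ambiguities $\A_n$ and their lengths are defined purely combinatorially from $S$ and $\B$, so they agree for $A$ and $A_S$. Thus in both cases the $n$-th term of the minimal graded bimodule resolution has its space of generators graded by $\{|p| : p\in\A_n\}$.

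Finally, recall that for an $N$-homogeneous algebra $B$ with $B_0=E$ semisimple, the $N$-Koszul property of Berger is equivalent to requiring that the $n$-th term of the minimal graded bimodule resolution of $B$ be concentrated in the single degree $\ell(n)$, where $\ell(2k)=kN$ and $\ell(2k+1)=kN+1$. Applied to both resolutions above, this becomes the condition: $|p|=\ell(n)$ for every $p\in\A_n$ and every $n$. Since this condition is intrinsic to $\A_\bullet$, it holds for $A$ if and only if it holds for $A_S$, which proves the proposition. The only delicate point is the characterization of the $N$-Koszul property in terms of the minimal \emph{bimodule} resolution; this follows by applying $E\otimes_A-\otimes_A E$ to the minimal bimodule resolution and using that minimality is preserved, so that the minimal bimodule resolution and the minimal resolution of $E$ as a left $A$-module generate projective modules in the same degrees.
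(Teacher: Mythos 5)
Your proposal is correct and follows essentially the same route as the paper: the paper's (very terse) proof simply observes that the projective bimodules in the minimal resolution of $A$ obtained from Theorem \ref{teo3} are in one-to-one correspondence with those of Bardzell's minimal resolution of $A_S$, with the same generating sets $\A_n$ in the same degrees, so the generators sit in the correct degrees for one algebra if and only if they do for the other. Your write-up just makes explicit the supporting points the paper leaves implicit (that the differentials preserve the length grading, and the characterization of $N$-Koszulity via the degrees of generators of the minimal graded resolution).
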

\begin{proof}
The projective bimodules appearing in the minimal resolution of $A_S$ are in one--to--one correspondence 
with those appearing in the resolution of $A$, so either both of them are generated in the correct 
degrees or none is.
\end{proof}

This proposition, together with Proposition \ref{cuadratico} and Thm. 3 of \cite{GH} give the following result.
\begin{corollary}
 If $A$ has a reduction system $\mathcal{R}$ 
 satisfying condition (1) of Theorem 
\ref{teo3}
 and such that $S \subseteq Q_{2}$, then $A$ is Koszul. 
\end{corollary}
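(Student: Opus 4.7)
The plan is to combine the preceding proposition with the well-known fact that quadratic monomial algebras are Koszul (presumably the content of Theorem~3 of~\cite{GH}), reducing the Koszulness of $A$ to the Koszulness of the associated monomial algebra $A_S$.

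First I would verify that the hypotheses of the preceding proposition are met. By assumption $S\subseteq Q_2$, so all $s_i$ are of length $2$, and condition~(1) of Theorem~\ref{teo3} forces $f_i$ to be homogeneous of length $2$ as well; in particular the integer $N$ of that condition equals $2$. Since every generator $s_i-f_i$ of $I$ is then homogeneous of length $2$ with respect to the length grading on $kQ$, the ideal $I$ is homogeneous and contained in $Q_{\geq 2}$, as required. It remains to check condition~(2) of Theorem~\ref{teo3}: by Proposition~\ref{cuadratico}, each element of $\A_n$ is a path of length exactly $n+1$, so lengths increase strictly with $n$, giving condition~(2) for free.

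Second, I would invoke the preceding proposition to conclude that $A$ is $N$-Koszul if and only if $A_S$ is $N$-Koszul. Taking $N=2$, this reads: $A$ is Koszul if and only if $A_S$ is Koszul. Finally, $A_S = kQ/\llrr{S}$ is a monomial algebra with relations concentrated in degree $2$, i.e.\ a quadratic monomial algebra. Such algebras are classically known to be Koszul (Theorem~3 of~\cite{GH}), so $A_S$ is Koszul, and hence so is $A$.

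The argument is essentially a routine assembly of prior results, and I do not foresee a real obstacle; the only mild subtlety is matching the quantifiers between ``$N$-Koszul for some $N$'' and the specific value $N=2$ that is forced here by $S\subseteq Q_2$, which is exactly why the hypothesis $S\subseteq Q_2$ (rather than merely $S\subseteq Q_{\geq 2}$) is needed to deduce ordinary Koszulness rather than a more general $N$-Koszul property.
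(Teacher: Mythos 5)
Your proposal is correct and follows exactly the route the paper takes: it verifies condition (2) of Theorem \ref{teo3} via Proposition \ref{cuadratico}, applies the preceding proposition to reduce to the monomial algebra $A_S$, and concludes by the Koszulness of quadratic monomial algebras from Theorem 3 of \cite{GH}. The paper's own proof is just a one-line citation of these same three ingredients, which your write-up fleshes out faithfully.
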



\vspace{0.5 cm} 


Sergio Chouhy and Andrea Solotar: IMAS y Dto de Matem\'{a}tica, Facultad de Ciencias Exactas y Naturales,
Universidad de Buenos Aires, 
Ciudad Universitaria, Pabell\`{o}n 1,
(1428) Buenos Aires, Argentina

\emph{asolotar@dm.uba.ar}

\emph{schouhy@dm.uba.ar}


\begin{thebibliography}{99}

 \bibitem[An]{An} D. J. Anick, On the homology of associative algebras, Trans. 
 Amer. Math. Soc. {\bf 296} (1986), no.~2, 641--659.

 \bibitem[AG]{AG} D. J. Anick\ and\ E. L. Green, On the homology of quotients
 of path algebras, Comm. Algebra {\bf 15} (1987), no.~1-2, 309--341.
 
 \bibitem[B]{B} G. M. Bergman, The diamond lemma for ring theory, Adv. in 
 Math. {\bf 29} (1978), no.~2, 178--218. 
 
 \bibitem[Ba]{Ba} M. J. Bardzell, The alternating syzygy behavior of 
 monomial algebras, J. Algebra {\bf 188} (1997), no.~1, 69--89.
 
 \bibitem[Be1]{Be1} R. Berger, Gerasimov's theorem and $N$-Koszul 
algebras, 
J. Lond. Math. Soc. (2) {\bf 79} (2009), no.~3, 631--648.

 \bibitem[Be2]{Be2} R. Berger, Weakly confluent quadratic algebras, 
Algebr. 
Represent. Theory {\bf 1} (1998), no.~3, 189--213.

 \bibitem[BE]{BE} P. A. Bergh\ and\ K. Erdmann, Homology and cohomology of
 quantum complete intersections, Algebra Number Theory {\bf 2} (2008), 
 no.~5, 501--522.
 
 \bibitem[BG]{BG} R. Berger\ and\ V. Ginzburg, Higher symplectic 
 reflection algebras and non-homogeneous $N$-Koszul property, 
 J. Algebra {\bf 304} (2006), no.~1, 577--601.
 
 \bibitem[BGMS]{BGMS}R.-O. Buchweitz, E. Green, D. Madsen and \O. Solberg,
 Finite Hochschild cohomology without finite global dimension, Math. Res. 
 Lett. {\bf 12} (2005), no.~5-6, 805--816.
 
 \bibitem[Bo]{Bo} R. Bocklandt, Graded Calabi Yau algebras of dimension 
 3, J. Pure Appl. Algebra {\bf 212} (2008), no.~1, 14--32.

 \bibitem[BR]{BR} G. Benkart\ and\ T. Roby, Down-up algebras, J. Algebra 
 {\bf 209} (1998), no.~1, 305--344.
 
  \bibitem[BSW]{BSW} R. Bocklandt, T. Schedler\ and\ M. Wemyss, 
Superpotentials
 and higher order derivations, J. Pure Appl. Algebra {\bf 214} (2010), 
 no.~9, 1501--1522.
 
 \bibitem[BW]{BW} G. Benkart\ and\ S. Witherspoon, A Hopf structure for 
 down-up algebras, Math. Z. {\bf 238} (2001), no.~3, 523--553.

  \bibitem[CL]{CL} P. A. A. B. Carvalho\ and\ S. A. Lopes, Automorphisms of 
 generalized down-up algebras, Comm. Algebra {\bf 37} (2009), no.~5, 
 1622--1646.
 
 \bibitem[CM]{CM} P. A. A. B. Carvalho\ and\ I. M. Musson, Down-up algebras 
 and their representation theory, J. Algebra {\bf 228} (2000), no.~1, 286--310.
 
 \bibitem[CS]{CS} T. Cassidy\ and\ B. Shelton, Basic properties of 
 generalized down-up algebras, J. Algebra {\bf 279} (2004), no.~1, 402--421.
 
 \bibitem[GH]{GH} E. L. Green and R. Q. Huang, Projective resolutions of straightening closed 
  algebras generated by minors. Adv. Math. {\bf 110} (1995), no.~2, 314--333. 

 \bibitem[GM]{GM} E. L. Green and E. N. Marcos, $d-$Koszul algebras, $2-d-$determined algebras and $2-d-$Koszul algebras, 
 J. Pure Appl. Algebra {\bf 215} (2011), no.~4, 439--449.
 
 \bibitem[GZ]{GZ}  E.L. Green and D. Zacharia, The cohomology ring of a monomial algebra, Manuscripta Math. {\bf 85} (1994), no.~1, 11-–23.
 
 \bibitem[GHM]{GHM} Y. Guiraud, E. Hoffbeck, P. Malbos, Linear polygraphs and Koszulity of algebras, \texttt{arXiv:1406.0815}
 
 \bibitem[KK]{KK} E. Kirkman\ and\ J. Kuzmanovich, Non-Noetherian down-up 
 algebras, Comm. Algebra {\bf 28} (2000), no.~11, 5255--5268. 
 
 \bibitem[KMP]{KMP} E. Kirkman, I. M. Musson\ and\ D. S. Passman, Noetherian 
 down-up algebras, Proc. Amer. Math. Soc. {\bf 127} (1999), no.~11, 
3161--3167.
 
 \bibitem[Kob]{Kob} Y. Kobayashi, Gr\"obner bases of associative algebras 
and the Hochschild cohomology, Trans. Amer. Math. Soc. {\bf 357} (2005), 
no.~3, 1095--1124.
 
  \bibitem[Ku1]{Ku1} R. S. Kulkarni, Down-up algebras and their representations, 
 J. Algebra {\bf 245} (2001), no.~2, 431--462. 
 
 \bibitem[Ku2]{Ku2} R. S. Kulkarni, Down-up algebras at roots of unity, Proc. 
  Amer. Math. Soc. {\bf 136} (2008), no.~10, 3375--3382.
 
 \bibitem[P1]{P1} I. Praton, Primitive ideals of Noetherian generalized down-up 
 algebras, Comm. Algebra {\bf 39} (2011), no.~11, 4289--4318. 
 
 \bibitem[P2]{P2} I. Praton, Simple modules and primitive ideals of 
 non-Noetherian generalized down-up algebras, Comm. Algebra {\bf 37} (2009), 
 no.~3, 811--839. 
 

 \bibitem[P3]{P3} I. Praton, Primitive ideals of Noetherian down-up algebras, 
 Comm. Algebra {\bf 32} (2004), no.~2, 443--471.
 
 \bibitem[Sk]{Sk} E. Sk\"oldberg, A contracting homotopy for Bardzell's 
 resolution, Math. Proc. R. Ir. Acad. {\bf 108} (2008), no.~2, 111--117.
 
 \bibitem[VdB]{VdB} M. Van den Bergh, Calabi-Yau algebras and superpotentials,
 to appear in Selecta Mathematica.
 
 \bibitem[Z]{Z} K. Zhao, Centers of down-up algebras, J. Algebra {\bf 214} 
 (1999), no.~1, 103--121.
\end{thebibliography}
\end{document}